\newtheorem{lemma}{Lemma}
\newtheorem{prop}{Proposition}
\newtheorem{corollary}{Corollary}
\theoremstyle{remark}
\newtheorem{rmrk}{Remark}
\newtheorem{exmp}{Example}
\theoremstyle{definition}
\newtheorem{definition}{Definition}
\newtheorem{dfn}{Definition}
\newtheorem{assumption}{Assumption}
\newtheorem{thm}{Theorem}
\newtheorem*{thm*}{Theorem}
\crefname{equation}{equation}{equations}
\Crefname{thm}{Theorem}{Theorems}
\crefname{prop}{proposition}{propositions}
\crefname{exmp}{example}{examples}
\newcommand{\im}{\operatorname{im}}
\DeclareMathOperator{\ind}{ind}
 \newcommand{\cG}{\mathcal{G}}
 \newcommand{\cM}{\mathcal{M}}
 \newcommand{\cO}{\mathcal{O}}
 \newcommand{\cU}{\mathcal{U}} 
 \newcommand{\cV}{\mathcal{V}}
\newcommand{\C}{\mathbb{C}}
 \newcommand{\N}{\mathbb{N}}
 \newcommand{\R}{\mathbb{R}}
\newcommand{\X}{\mathbb{X}}
 \newcommand{\p}{\partial}
\let\@fnsymbol\@arabic
\author{Andrei Agrachev, Stefano Baranzini, Ivan Beschastnyi \footnote{:The work of the third author was supported through the CIDMA Center for Research and Development in
Mathematics and Applications, and the Portuguese Foundation for Science and Technology (``FCT - Funda\c{c}\~ao para a Ci\^encia e a Tecnologia") within the project UIDP/04106/2020.}}
\title{Index theorems for graph-parametrized optimal control problems}
\begin{document}
\maketitle

\begin{abstract}
In this paper we prove Morse index theorems for a big class of constrained variational problems on graphs. Such theorems are useful in various physical and geometric applications. Our formulas compute the difference of Morse indices of two Hessians related to two different graphs or two different sets of boundary conditions. Several applications such as the iteration formulas or lower bounds for the index are proved.
\end{abstract}

\section{Introduction}
\label{sec:intro}
\subsection{Motivation}

\label{subsec:motivation}

The aim of this paper is to derive effective ways of computing the Morse index of second variation of constrained variational problems on graphs. Such problems can be conveniently formulated as optimal control problems. The results of this article can be used to study minimality and stability in a variety of geometrically and physically interesting problems.
	
Let us start with some examples which motivate the overall set-up in which we are working.
Given three points $a,b,c$ on a plane $\R^2$ place a point $d\in \R^2$ such that the sum of distances between $d$ and each of the points $a,b,c$ is minimal. It is well known that $d$ should be placed at the Fermat point, a fact central to the Steiner tree problem. In particular, each of the angles $\angle adb$, $\angle adc$, $\angle bdc$ should be of $120^\circ$.  
\begin{figure}[h]
		\begin{center}
		\input{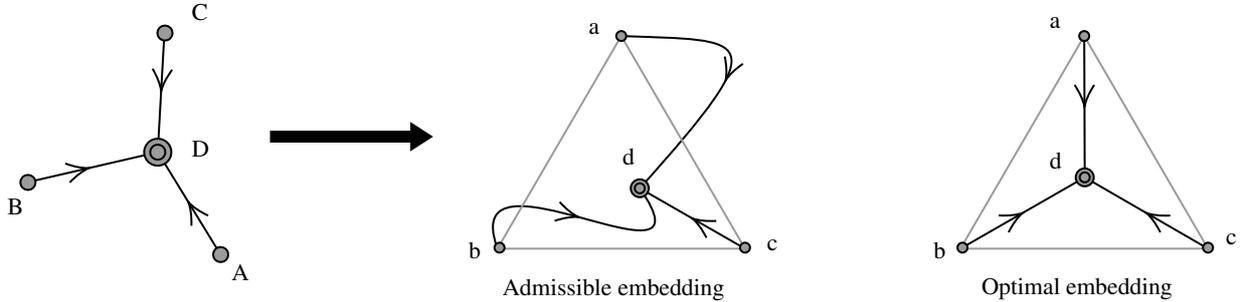}
		\caption{The graph $\mathcal{G}$ associated to the Fermat problem and two possible embeddings in $\mathbb{ R}^2$. \label{picture: Fermat graph} }
	\end{center}
	\end{figure}

We could have formulated the same problem in a slightly different, but equivalent way. Consider the tree graph $\cG$ in Figure \ref{picture: Fermat graph} and denote by $\cG_0 = \{A,B,C,D\}$ the set of its vertices and $\cG_1$ the set of its edges. We can parametrize each edge by the interval $[0,1]$, an operation which also assigns orientations to each of them. Then we would like to find a continuous map $F: \cG \to \R^2$ with smooth restrictions to each edge such that 
\begin{align*}
	F(A) &= a,\\
	F(B) &= b,\\
	F(C) &= c
\end{align*}
and
$$
\sum_{e\in\cG_1}l(F(e)) \to \min,
$$
where $l$ is the Euclidean length. So we have reformulated our problem as a minimal immersion of the graph $\cG$ into the Euclidean subspace $\R^2$. In this reformulation the following well-known notion plays a central role:
 
\begin{definition}
	A metric graph is a graph $\cG = (\cG_0,\cG_1)$, where $\cG_0$ is the set of vertices and $\cG_1$ is the set of parametrized edges. Each edge $e\in\cG_1$ is parametrized either by a finite interval $[0,l_e]$ for some $l_e >0$ or by $[0,+\infty)$.
\end{definition}

We can generalize the previous problem in several ways in order to encompass a great variety of situations commonly encountered in applications.
For example, we can consider more general metric graphs $\cG$, a manifold $M$ instead of $\R^2$, we can choose a different functional to minimize and, most importantly, we can assume that each edge satisfies a differential constraint. Let us consider some practical examples of this kind in order to see that an abstract approach is useful and allows to treat apparently different problems using the same framework.

Assume that we have a set of elastic rods soldered together to form a graph-like structure. Some vertices of this graph are assumed to be fixed, while others are free. What shapes can it take? This question arises often in civil engineering, for example, in construction of bridges, towers and various other structures (\Cref{picture:bridge}). They have some parts firmly fixed on the ground, which correspond to fixed vertices, while others are free to move in space. It is known that the elastic rods are extremal curves in certain constraint variational problems and stable configurations correspond to local minimizers of the bending energy~\cite{andrey_sachkov_elastica}. 

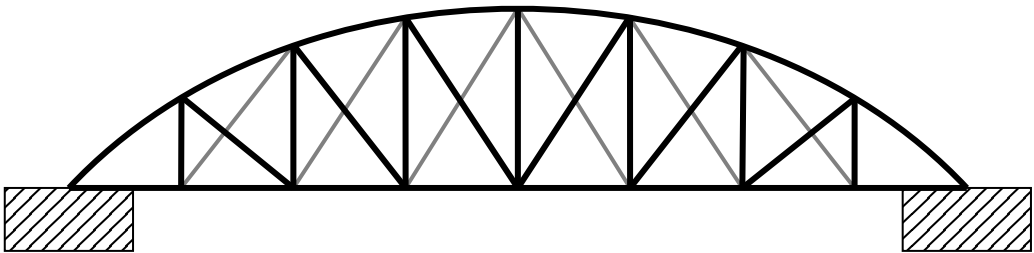
\begin{figure}[h]
		\begin{center}
 
\tikzset{
pattern size/.store in=\mcSize, 
pattern size = 5pt,
pattern thickness/.store in=\mcThickness, 
pattern thickness = 0.3pt,
pattern radius/.store in=\mcRadius, 
pattern radius = 1pt}
\makeatletter
\pgfutil@ifundefined{pgf@pattern@name@_0nbszopea}{
\pgfdeclarepatternformonly[\mcThickness,\mcSize]{_0nbszopea}
{\pgfqpoint{0pt}{0pt}}
{\pgfpoint{\mcSize+\mcThickness}{\mcSize+\mcThickness}}
{\pgfpoint{\mcSize}{\mcSize}}
{
\pgfsetcolor{\tikz@pattern@color}
\pgfsetlinewidth{\mcThickness}
\pgfpathmoveto{\pgfqpoint{0pt}{0pt}}
\pgfpathlineto{\pgfpoint{\mcSize+\mcThickness}{\mcSize+\mcThickness}}
\pgfusepath{stroke}
}}
\makeatother

 
\tikzset{
pattern size/.store in=\mcSize, 
pattern size = 5pt,
pattern thickness/.store in=\mcThickness, 
pattern thickness = 0.3pt,
pattern radius/.store in=\mcRadius, 
pattern radius = 1pt}
\makeatletter
\pgfutil@ifundefined{pgf@pattern@name@_gb2va1e0e}{
\pgfdeclarepatternformonly[\mcThickness,\mcSize]{_gb2va1e0e}
{\pgfqpoint{0pt}{0pt}}
{\pgfpoint{\mcSize+\mcThickness}{\mcSize+\mcThickness}}
{\pgfpoint{\mcSize}{\mcSize}}
{
\pgfsetcolor{\tikz@pattern@color}
\pgfsetlinewidth{\mcThickness}
\pgfpathmoveto{\pgfqpoint{0pt}{0pt}}
\pgfpathlineto{\pgfpoint{\mcSize+\mcThickness}{\mcSize+\mcThickness}}
\pgfusepath{stroke}
}}
\makeatother
\tikzset{every picture/.style={line width=0.75pt}} 

\begin{tikzpicture}[x=0.75pt,y=0.75pt,yscale=-0.8,xscale=0.8]

\draw  [pattern=_0nbszopea,pattern size=6pt,pattern thickness=0.75pt,pattern radius=0pt, pattern color={rgb, 255:red, 0; green, 0; blue, 0}] (40,200) -- (120,200) -- (120,239.58) -- (40,239.58) -- cycle ;
\draw  [pattern=_gb2va1e0e,pattern size=6pt,pattern thickness=0.75pt,pattern radius=0pt, pattern color={rgb, 255:red, 0; green, 0; blue, 0}] (600,200) -- (680,200) -- (680,239.58) -- (600,239.58) -- cycle ;
\draw [line width=2.25]    (80,200) -- (640,200) ;
\draw [line width=2.25]    (80,200) .. controls (219,49.97) and (500,49.47) .. (640,200) ;
\draw [line width=2.25]    (150.22,142.76) -- (220,200) ;
\draw [line width=2.25]    (220,110) -- (290,200) ;
\draw [line width=2.25]    (289.89,92.76) -- (360,200) ;
\draw [line width=2.25]    (429.53,92.76) -- (360,200) ;
\draw [line width=2.25]    (500,110) -- (430,200) ;
\draw [line width=2.25]    (570.04,143.76) -- (500,200) ;
\draw [color={rgb, 255:red, 0; green, 0; blue, 0 }  ,draw opacity=0.5 ][line width=1.5]    (500,110) -- (528.87,147.12) -- (570,200) ;
\draw [color={rgb, 255:red, 0; green, 0; blue, 0 }  ,draw opacity=0.5 ][line width=1.5]    (429.53,92.76) -- (500,200) ;
\draw [color={rgb, 255:red, 0; green, 0; blue, 0 }  ,draw opacity=0.5 ][line width=1.5]    (360.04,87.09) -- (430,200) ;
\draw [color={rgb, 255:red, 0; green, 0; blue, 0 }  ,draw opacity=0.5 ][line width=1.5]    (360.04,87.09) -- (290,200) ;
\draw [color={rgb, 255:red, 0; green, 0; blue, 0 }  ,draw opacity=0.5 ][line width=1.5]    (289.89,92.76) -- (220,200) ;
\draw [color={rgb, 255:red, 0; green, 0; blue, 0 }  ,draw opacity=0.5 ][line width=1.5]    (220,110) -- (150,200) ;
\draw [line width=2.25]    (150.22,142.76) -- (150,200) ;
\draw [line width=2.25]    (220,110) -- (220,200) ;
\draw [line width=2.25]    (289.89,92.76) -- (290,200) ;
\draw [line width=2.25]    (360.04,87.09) -- (360,200) ;
\draw [line width=2.25]    (429.93,91.76) -- (430,200) ;
\draw [line width=2.25]    (500.82,110.83) -- (500,200) ;
\draw [line width=2.25]    (570.04,143.76) -- (570,200) ;

\end{tikzpicture}
		\caption{An example of a truss bridge, which can be viewed as a number of connected elastic beams forming a graph. \label{picture:bridge} }
	\end{center}
	\end{figure}

Another interesting example comes from quantum mechanics. A quantum graph is a metric graph $\cG$ with a possibly non-linear Schr\"odinger equation defined on it. A ground state $\psi: \cG \to \C$ of a quantum graph is a global minimizer of the functional
$$
\sum_{e\in \cG_1}\int_0^{l_e} \frac{|\dot{\psi}_e|^2}{2} - \frac{|\psi_e|^\alpha}{\alpha}dt \to \min ,
$$
under the constraint of fixed total mass $\mu$: 
$$
\sum_{e\in \cG_1}\int_0^{l_e} |\psi_e|^2 dt = \mu ,
$$
{together with} a regularity condition $\psi\in H^1(\cG)$:
$$
\sum_{e\in \cG_1}\int_0^{l_e} |\psi_e|^2 + |\dot{\psi}_e|^2 dt < \infty,
$$
and certain boundary condition on the value of $\psi$ at the vertices $\cG_0$ which are usually taken to be Dirichlet, Neumann or Kirkchoff (or a combination of the above). Here $\psi_e$ are restrictions of $\psi$ to the edge $e$, $l_e$ is the length of edge and $2\leq \alpha \leq 6$ is a constant.

There is an extensive literature concerning quantum graphs. In particular regarding existence of global minimizers for the NLS energy, see for example \cite{NLS1}, \cite{NLs3}, \cite{NLS2}, \cite{NLS4}  (or \cite{overviewNLS} and \cite{noteNLS} for overviews of the results) and references therein. Also in situation that do not fit in our current framework, for example \cite{NLSperiodic} where the same problem is discussed on a graph with an infinite number of edges. For the linear problem when $\alpha =2$ the book~\cite{berkolaiko} is a good source.

Another important application comes from quantum physics. In the perturbative approach to quantum mechanics and quantum field theory via the path integral method, a formal analogue of the stationary phase method is used. This formula requires to know the index and a suitable generalization of the determinant of the second variation at a critical point~\cite{dunne,ludewig}. The index is computed in the current paper, while the determinant will be investigated in a forthcoming paper.

\subsection{Problem statement}
\label{subsec:statement}

In order to transmit better the ideas and simplify the proofs we will make several technical assumptions, starting with the following one.

\begin{assumption}
	\label{ass:graph}
	Graph $\cG$ has a finite number of edges.
\end{assumption} 

The class of minimization problems we are interested is described in the next definition.

\begin{definition}
	Given a metric graph $\cG = (\cG_0,\cG_1)$ and a manifold $M$ consider the following data:
	\begin{enumerate}
		\item \textit{Control constraints} $U_e$ parametrized by the elements of the edge set $e\in \cG_1$, which are subsets $U_e \subset \R^{k_e}$ for some $k_e \in \N$;
		\item \textit{Families of time-dependent complete vector fields} $f_{t,u}^e\in Vec(M)$ parametrized by the elements of the edge set $e\in \cG_1$ and controls $u\in U_e$;
		\item \textit{Lagrangians} $\ell^e: [0,l_e]\times M \times U_e \to \R$ parametrized by the edge set $e\in \cG_1$.
		\item \textit{Boundary conditions} given by a subset $N\subset M^{| \cG_0|}$.
	\end{enumerate}
	\textit{A graph-parametrized optimal control problem} is the problem of finding a continuous map $q: \cG \to M$ with almost everywhere differentiable restrictions to edges $q_e: [0,l_e]\to M$, such that it minimizes the following functional
	\begin{equation}
		\label{eq:graph_functional}
		\varphi[u]=\sum_{e\in \cG_1} \int_0^{l_e} \ell^e(q_e(t),u_e(t))dt \to \min
	\end{equation}
	subject to constraints
	\begin{equation}
		\label{eq:graph_control}
		\dot{q}_e = f^e_{t,u_e(t)}(q_e), \qquad u_e\in L^\infty([0,l_e],U_e),
	\end{equation}
	\begin{equation}
		\label{eq:graph_boundary}
		\qquad (q(v_1),q(v_2),\dots, q(v_{|\cG_0|}) \in N. 
	\end{equation}
	where $v_1,\dots,v_{|\cG_0|}$ are vertices in $\cG_0$ indexed by integers.
\end{definition} 
All of the examples from \Cref{subsec:motivation} can be formulated as graph-parametrized optimal control problems. The goal of this article is to study the second variation of such minimization problems and characterize local minimizers. Local minimizers play an important role in modelling of various physical phenomena, since they usually correspond to stable configurations observable in nature, which makes them relevant even if there are no global minima. 

The next assumption allows to avoid various regularity technicalities.
\begin{assumption}
	\label{ass:regularity}
	\begin{enumerate}
		\item $N$ is an embedded submanifold;
		\item $U_e = \R^{k}$ for some $k\in \N$ and all $e\in \cG_1$;
		\item Vector fields $f_u^e$ and functions $\ell^e$ are jointly smooth in the space variables $q$ and in the control variables $u$, piecewise smooth in $t$ for all $e\in \cG_1$.
	\end{enumerate}
\end{assumption}

We can reformulate a graph-parametrized optimal control problem~\eqref{eq:graph_functional}-\eqref{eq:graph_boundary} as an equivalent standard optimal control problem with non-fixed boundary conditions of the form:
\begin{equation}
	\label{eq:control}
	\dot{q} = f^t_{u(t)}(q), \qquad q\in M, \qquad u \in L^\infty([0,1],\R^k);
\end{equation}
\begin{equation}
	\label{eq:boundary}
	(q(0),q(1))\in N \subset M\times M;
\end{equation}
\begin{equation}
	\label{eq:functional}
	\varphi[u]=\int_0^1 \ell(t,q(t),u(t))dt \to \min.
\end{equation}
We will give the precise algorithm to do so later, in Subsection~\ref{subsec:reduction_to_1d}. Note that~\eqref{eq:control}-\eqref{eq:functional} by itself is a special case of a graph-parametrized optimal control problem where the metric graph $\cG$ is the interval $[0,1]$. 

The usual approach for identifying local minimizers of this problem can be roughly described as follows. First one applies first order minima conditions and identifies critical points of the functional $\varphi$. These critical points are called \textit{extremal curves}. After that one studies second order conditions.
\begin{definition}
\label{def:hessian}
Suppose that $u$ is a critical point of the functional $\varphi$ restricted to the space of admissible variations (i.e. variation satisfying conditions \eqref{eq:control}-\eqref{eq:boundary}). Assume that the space of admissible variation is a  smooth manifold in a neighbourhood of $u$, the Hessian (or the second variation) is a quadratic form $Q$ defined on the tangent space to the space of admissible variations. It is given by the second derivative of the functional $\varphi$.
\end{definition}
Often, in constrained variational problems, the space of admissible variations is not a manifold. For this reason we will make additional assumptions in order to guarantee that this space is at least locally smooth in a neighbourhood of a critical point.

Quadratic form $Q$ encodes information about minimality of a given extremal curve in its kernel and negative inertia index $\ind^- Q$. In the classical calculus of variations a necessary and sufficient condition for an extremal curve to be a local minimizer is $\ind^- Q = 0$, whenever the second variation is non-degenerate. Note that this is not always true for constrained variational problems. Sometimes a critical point stops being a local minimizer only when the inertia index of $Q$ exceeds a certain threshold (see Theorem 20.3 in \cite{bookcontrol}). Thus we need to find good algorithms for computing $\ind^- Q$. This paper provides several efficient ways of doing this in the context of graph-parametrized problems.

This reduction allows us to focus completely on \eqref{eq:control}-\eqref{eq:functional}. That problem has a relatively simple characterisation of critical points, known as the Ponrtyagin maximum principle (PMP). In order to describe the PMP in this setting we introduce a family of Hamiltonian functions on $T^* M$
$$
h^t_u(\lambda)=\langle \lambda, f^t_u(q)\rangle - \nu \ell^t(q,u), \qquad q = \pi(\lambda),
$$
where $\nu \in \{0,1\}$ and for any submanifold $N \subseteq M \times M$ define 
\begin{equation}
\label{eq:ann_def}
A(N) = \{(\lambda_0,\lambda_1)\in T^*M \times T^*M\, : \, \lambda_0(X_0) = \lambda_1(X_1), \,\forall (X_0,X_1) \in T_{\pi( \lambda_0,\lambda_1)}N\},
\end{equation}
which should be thought of as the annihilator of $N$.

\begin{thm}
	A curve $q:[0,1]\to M$ is an extremal curve if there exists a control function $u\in L^\infty[0,1]$ and a curve $\lambda:[0,1]\to T^*M$ such that for almost all $t\in [0,1]$
	\begin{enumerate}
		\item the curve $q$ is the projection of $\lambda$: 
		$$
		q(t) = \pi (\lambda(t));
		$$
		\item $\lambda$ satisfies the following Hamiltonian system:
		$$
		\frac{d \lambda}{dt} = \vec{h}_{u}^t(\lambda);
		$$
		\item the control $u$ is determined by the maximum condition:
		$$
		h^t_{u(t)}(\lambda(t)) = \max_{u\in \R^k} h^t_u( \lambda(t));
		$$
		\item the non-triviality condition holds: $(\lambda(t),\nu)\neq (0,0)$;
		\item and the transversality conditions holds:
		\begin{equation}
			\label{eq:transversal}
			(\lambda(0),\lambda(1)) \in A(N).
		\end{equation}
	\end{enumerate}
\end{thm}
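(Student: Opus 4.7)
The plan is to cast the constrained extremality condition as a Lagrange multiplier problem on the space of controls and boundary data, and then unfold the resulting finite-dimensional multipliers into a covector curve in $T^*M$ via the variational equation along the trajectory.

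First I would introduce the endpoint map $E : u \mapsto (q_u(0), q_u(1))$, where $q_u$ is the solution of \eqref{eq:control} with a free initial point treated as an additional variable. Extremality of $q$ subject to \eqref{eq:boundary} means that $d\varphi_u$ vanishes on the subspace $\{v : dE_u[v] \in T_{E(u)} N\}$ of admissible variations. Under the smoothness hypothesis of Definition~\ref{def:hessian}, the classical Lagrange multiplier argument (via Ljusternik's theorem) produces $\lambda_0\in T^*_{q(0)}M$, $\lambda_1\in T^*_{q(1)}M$ and $\nu\in\{0,1\}$, not all zero, such that $(\lambda_0,\lambda_1)\in A(N)$ and
\[
\nu \, d\varphi_u[v] = \lambda_1\bigl(dE^1_u[v]\bigr) - \lambda_0\bigl(dE^0_u[v]\bigr)
\]
for every admissible $v$. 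This immediately yields non-triviality (4) and transversality (5), with $\nu=0$ covering the abnormal case and $\nu=1$ the normal one.

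Second, I would compute $dE^1_u[v]$ by integrating the variational equation $\dot\xi = (\partial_q f^t_u)\xi + (\partial_u f^t_u)v$ along $q$. Define $\lambda(t)$ as the covector obtained by transporting $\lambda_1$ backwards along the dual of the variational flow; a short symplectic computation identifies this transport with the Hamiltonian flow of $h^t_u(\lambda)=\langle\lambda,f^t_u(q)\rangle-\nu\ell^t(q,u)$, giving (1) and (2) and the matching $\lambda(0)=\lambda_0$. Substituting back into the Lagrange identity and using that $v\in L^\infty([0,1],\R^k)$ is arbitrary yields $\partial_u h^t_{u(t)}(\lambda(t))=0$ almost everywhere, which is the stationarity form of the maximum condition.

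Finally, to strengthen this stationarity to the full maximum condition (3), I would use needle variations: replacing $u(\tau)$ by $w\in\R^k$ on a short interval $[\tau,\tau+\varepsilon]$ produces, to first order in $\varepsilon$, a perturbation of $E^1(u)$ equal to $\varepsilon$ times the forward variational transport of $f^\tau_w(q(\tau))-f^\tau_{u(\tau)}(q(\tau))$, and a perturbation of $\varphi$ equal to $\varepsilon(\ell^\tau(q(\tau),w)-\ell^\tau(q(\tau),u(\tau)))$. Pairing with $(\lambda_1,\nu)$ via the adjoint relation, the minimum property forces $h^\tau_{u(\tau)}(\lambda(\tau))\geq h^\tau_w(\lambda(\tau))$ at every Lebesgue point $\tau$ of $u$ and for every $w$, which is (3). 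The main obstacle will be the technical analysis of the needle-variation step: verifying the first-order expansions at a.e.\ $\tau$ under only $L^\infty$ regularity of the control and piecewise smoothness in $t$, and obtaining remainder estimates uniform enough to pass to the limit $\varepsilon\to 0^+$ and conclude the inequality.
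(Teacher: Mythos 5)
The paper does not prove this theorem; the Pontryagin maximum principle is imported as a classical fact from the optimal control literature (cf.\ \cite{bookcontrol}), so there is no in-paper argument to compare against. Your sketch is the standard textbook route to PMP (Lagrange multipliers on the endpoint map for transversality and non-triviality, the variational/adjoint equation with a symplectic identification for the Hamiltonian system, and needle variations for the maximum condition), and it is sound at the level of an outline.

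Two remarks on the details. First, your needle-variation step deduces $h^\tau_{u(\tau)}(\lambda(\tau))\ge h^\tau_w(\lambda(\tau))$ from a \emph{minimality} hypothesis, whereas the theorem is stated for extremal curves, i.e.\ critical points of $\varphi$; for a mere critical point with unconstrained controls $U=\R^k$ (Assumption~\ref{ass:regularity}) the Lagrange multiplier computation only yields the stationarity $\partial_u h^t_{u(t)}(\lambda(t))=0$ a.e., and the full maximum is then supplied in the paper's setting by Assumptions~\ref{ass:hamilton} and~\ref{ass:legendre} (existence of a $C^2$ maximized Hamiltonian and strong Legendre), not by needle variations. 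Second, your Ljusternik argument as phrased applies only to the normal case; in the abnormal case $\nu=0$ the set of admissible controls need not be a manifold near $u$, and the multiplier comes instead from non-surjectivity of the differential of the endpoint map relative to $N$. Both are standard wrinkles and do not change the overall plan.
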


It is known that extremal curves in optimal control problems can have very different behaviour. They are usually separated into several classes, such as regular, singular, bang-bang and others~\cite{schattler-ledzewicz-book,osmolovskii,aronna_bang_singular,agrachev_stefani_bang}. We will now list the last pair of assumptions that will allow us to focus on a rather broad class of extremals for which a good geometric description of the second variation is possible.

\begin{assumption}
	\label{ass:hamilton}
	The maximized Hamiltonian
	$$
	H^t(\lambda) = \max_{u\in U} h_u^t(\lambda), \qquad \lambda \in T^*M
	$$
	is well-defined and $C^2$ on $T^*M \times[0,1]$. 
\end{assumption}

\begin{assumption}
	\label{ass:legendre}
	If $\lambda: [0,1]\to T^*M$ is an extremal satisfying PMP with control $u\in L^\infty([0,1],\R^k)$, then it satisfies the \textit{strong Legendre condition}. Which means that there exists a constant $c>0$ such that 
	\begin{equation}
		\label{eq:strong_legendre}
		\left.\frac{\p^2 h^t_u}{\p u^2}(v,v) \right|_{u=u(t)} \leq - c \|v\|^2.
	\end{equation}  
Assumption~\ref{ass:hamilton} will permit us to state the results in a  simple form using the Hamiltonian flow of $\vec{H}^t$, while Assumption~\ref{ass:legendre} guarantees that the quadratic form $Q$ from \Cref{def:hessian} has finite negative index, and that small arcs of a given extremal curve are local minimizers~\cite[Theorem 20.1]{bookcontrol}.
	
\end{assumption}

\subsection{Main results and structure of the paper}
\label{subsec:results}
We are now ready to formulate and discuss the main results of this paper. Consider an extremal $\lambda$ of an optimal control problem~\eqref{eq:control}-\eqref{eq:functional}, which satisfies a Hamiltonian system
\begin{equation}
	\label{eq:hamilton}
	\dot{\lambda} = \vec{H}^t(\lambda)
\end{equation}
under the Assumption~\ref{ass:hamilton}. This Hamiltonian vector field $\vec{H}^t$ generates a flow $\Psi_t: T^*M \to T^*M$.  Denote by
$$
\Gamma(\Psi_t) = \{(\lambda,\Psi_t(\lambda)): \lambda \in T^*M\} \subset T^*M \times T^*M
$$
its graph, which is a smooth submanifold of the product space $T^*M \times T^*M$. When $t=1$ we simply write $\Gamma(\Psi) = \Gamma(\Psi_1)$. 
We have the following main index theorem for the optimal control problem on the interval.
\begin{thm}
	\label{thm: comparison theorem}
	Let $\lambda:[0,1]\to T^*M$ be an extremal for~\eqref{eq:control}, \eqref{eq:functional} and simultaneously for two different boundary conditions $N$ and $\tilde{N}$ in~\eqref{eq:boundary}. Let $Q_N$ and $Q_{\tilde{N}}$ be the two quadratic forms for the second variation corresponding to the two boundary conditions. Denote $\underline{\lambda} = (\lambda(0),\lambda(1))$.  Then under the Assumptions~\ref{ass:graph}-\ref{ass:legendre} the negative inertia indices $\ind^- Q_N$, $\ind^- Q_{\tilde{N}}$ are finite and
	\begin{align}
		\label{eq:main_index_formula}
		\ind^-Q_{\tilde N} - \ind^{-} Q_{N} &= i\big(T_{\underline \lambda}A(N),T_{\underline{\lambda}}\Gamma(\Psi),T_{\underline \lambda}A(\tilde N)\big)+\dim(T_{\underline{\lambda}}N \cap T_{\underline{\lambda}}\tilde{N}) - \dim T_{\underline{\lambda}}N + k_0
	\end{align}
where $k_0 = \dim(T_{\underline \lambda} A(N) \cap T_{\underline{\lambda}}\Gamma(\Psi)) -   \dim(T_{\underline \lambda} A(N)\cap  T_{\underline{\lambda}}\Gamma(\Psi) \cap T_{\underline \lambda} A(\tilde N))$.
\end{thm}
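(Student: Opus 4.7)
The plan is to reduce the infinite-dimensional index computation to a purely symplectic linear algebra statement about three Lagrangian subspaces in the finite-dimensional symplectic vector space $T_{\underline\lambda}(T^*M \oplus T^*M)$ equipped with the form $\sigma_1 \ominus \sigma_0$ (the one that makes graphs of symplectomorphisms Lagrangian), and then apply the Kashiwara--H\"ormander triple index identity. The hypotheses of the theorem provide exactly the three Lagrangian subspaces we need: $T_{\underline\lambda}A(N)$, $T_{\underline\lambda}A(\tilde N)$ (Lagrangian because $A(N)$ and $A(\tilde N)$ are conormal bundles to $N$, $\tilde N$ respectively) and $T_{\underline\lambda}\Gamma(\Psi)$ (Lagrangian because $\Psi$ is the flow of a Hamiltonian field, smooth by Assumption~\ref{ass:hamilton}). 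The strong Legendre condition in Assumption~\ref{ass:legendre} guarantees the reduction is Fredholm and both negative indices are finite.

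\textbf{Step 1: reduction to finite dimensions.} Using the strong Legendre condition I would eliminate the control variable from the Hessian in the standard way, writing $Q_N$ as a bounded-from-below perturbation of a positive definite quadratic form on a Hilbert space of variations $\delta q$ constrained to $(\delta q(0),\delta q(1)) \in T_{\underline\lambda}N$. The Agrachev--Gamkrelidze/Jacobi-curve formalism then produces a symplectic reduction: the negative index of $Q_N$ decomposes as a baseline, depending only on the extremal $\lambda$ and the smooth Hamiltonian flow $\Psi_t$, plus a finite-dimensional correction that is the negative inertia of the residual quadratic form obtained by restricting to $T_{\underline\lambda}A(N)$ and projecting modulo $T_{\underline\lambda}\Gamma(\Psi)$. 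The crucial point is that the \emph{baseline is independent of $N$}, so it cancels in $\ind^-Q_{\tilde N}-\ind^-Q_N$ and we are left with the signature difference of two explicit finite-dimensional symplectic quadratic forms sharing $T_{\underline\lambda}\Gamma(\Psi)$ as reference.

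\textbf{Step 2: triple index identity and dimensional bookkeeping.} With both indices expressed as signatures of quadratic forms determined by $T_{\underline\lambda}A(N)$ and $T_{\underline\lambda}A(\tilde N)$ against the common reference $T_{\underline\lambda}\Gamma(\Psi)$, the classical cocycle property of the Kashiwara--H\"ormander triple index gives
\[
\ind^-Q_{\tilde N} - \ind^- Q_N \;=\; i\bigl(T_{\underline\lambda}A(N),T_{\underline\lambda}\Gamma(\Psi),T_{\underline\lambda}A(\tilde N)\bigr) + \text{(dim.\ correction)} + \text{(degeneracy correction)}.
\]
The dimension correction $\dim(T_{\underline\lambda}N\cap T_{\underline\lambda}\tilde N) - \dim T_{\underline\lambda}N$ arises because $Q_N$ and $Q_{\tilde N}$ are defined on variation spaces of different dimensions (governed by $\dim N$ vs.\ $\dim\tilde N$), and converting the symmetric signature statement into the asymmetric difference of negative indices requires subtracting the codimension of the common-variation subspace inside the $N$-variation space. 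The degeneracy correction $k_0$ records the kernel mismatch: directions in $T_{\underline\lambda}A(N)\cap T_{\underline\lambda}\Gamma(\Psi)$ (kernel of the $N$-form) that fail to lie in $T_{\underline\lambda}A(\tilde N)$ pick up a definite sign under the new form, and the triple index as defined quotients out the common kernel, so $k_0$ restores the missing count.

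\textbf{Main obstacle.} The chief difficulty is Step 1: carrying out the reduction to finite-dimensional symplectic data and verifying that the $N$-dependence of $\ind^-Q_N$ enters exclusively through the Lagrangian subspace $T_{\underline\lambda}A(N)$. This requires a careful study of the Jacobi curve $t\mapsto d\Psi_t(\mathrm{vert}_{\lambda(0)})$ in the Lagrangian Grassmannian, a rigorous account of the conjugate points accumulated along $[0,1]$ (which constitute the boundary-independent baseline), and the correct handling of degenerate intersections so that the kernels at the endpoints are tracked without double-counting. Step 2 is then essentially linear symplectic algebra, but pinning down the precise dimensional and degeneracy corrections so that the identity matches~\eqref{eq:main_index_formula} on the nose is where the bookkeeping must be done with care.
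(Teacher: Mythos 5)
Your high-level strategy -- isolate an $N$-independent fixed-endpoint baseline, express the residual as a Maslov triple index, and difference the two via the Kashiwara cocycle -- coincides with the paper's, but the proposal leaves the hard parts as promissory notes. The paper fills Step~1 with two concrete devices you do not reproduce. First, general $N\subset M\times M$ is reduced to the separated case $N_0\times N_1$ by a doubling trick: one lifts to $M\times M$ with frozen dynamics $\dot x=0$ and boundary constraint $\Delta\times N$, which makes the required symplectic reduction explicit and reduces to the separated case. Second, for separated conditions the paper constructs an \emph{extended} control problem on $[-1,2]$ with control-linear dynamics on $[-1,0]$ and $[1,2]$ tangent to $N_0$ and $N_1$; this realizes the free-endpoint Hessian as a genuine fixed-endpoint Hessian on a larger control space $\R^{\dim N_0}\oplus L^\infty\oplus\R^{\dim N_1}$, after which the Hilbert-space index-on-subspaces formula is applied to the finite-codimension subspace $V$ of variations with $v_0=v_1=0$. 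Without a construction of this type, the assertion that ``the baseline is independent of $N$'' is exactly what needs proving, not a usable premise.

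Step~2 also hides a nontrivial point. The residual Maslov form that actually drops out of the Jacobi boundary-value problem is $m\bigl(\Pi^2_{\underline\lambda},\Gamma(\Theta),T_{\underline\lambda}A(N)\bigr)$, with the double vertical fibre $\Pi^2_{\underline\lambda}$ as the third Lagrangian, not $\Gamma(\Psi)$ alone, so you are not differencing two forms against a single shared reference. To pass from the difference of two such indices to the symmetric form $i\bigl(T_{\underline\lambda}A(N),\Gamma(\Theta),T_{\underline\lambda}A(\tilde N)\bigr)$ you must apply the coboundary identity~\eqref{eq: coboundary index} to the four Lagrangians $\Pi^2_{\underline\lambda},\Gamma(\Theta),T_{\underline\lambda}A(N),T_{\underline\lambda}A(\tilde N)$, and then you must \emph{kill} the extraneous term $i\bigl(\Pi^2_{\underline\lambda},T_{\underline\lambda}A(\tilde N),T_{\underline\lambda}A(N)\bigr)$. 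That vanishing is a genuine lemma -- the Maslov form of three conormal annihilators of nested submanifolds is identically zero (\Cref{lemma: maslov annullatore}) -- and it is also at this stage that the terms $\dim(T_{\underline\lambda}N\cap T_{\underline\lambda}\tilde N)-\dim T_{\underline\lambda}N$ emerge, via $\dim(\Pi^2_{\underline\lambda}\cap T_{\underline\lambda}A(N))=2\dim M-\dim T_{\pi(\underline\lambda)}N$. So the ``dimensional bookkeeping'' you deferred is not mechanical: it requires a structural vanishing result you have not identified, and your description of $k_0$ as ``restores the missing count'' does not yet explain why the correct triple for $k_0$ is $\dim(T_{\underline\lambda}A(N)\cap\Gamma(\Theta))-\dim(T_{\underline\lambda}A(N)\cap\Gamma(\Theta)\cap T_{\underline\lambda}A(\tilde N))$ rather than some other collection of kernels.
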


Only one term on the right hand-side still requires an explanation. Here $i(T_{\underline \lambda}A(N),T_{\underline{\lambda}}\Gamma(\Psi),T_{\underline \lambda}A(\tilde N))$ is the negative Maslov index of the triple of Lagrangian subspaces indicated in the bracket of the symplectic space $T_{\lambda(0)}(T^*M)\times T_{\lambda(1)}(T^*M)$ with the form $\big(-\sigma_{\lambda(0)}\big)\oplus \sigma_{\lambda(1)}$. A precise definition of Lagrangian spaces and of the Maslov index used in this paper will be given in \Cref{section: maslov index}. For now it is enough to know that it is a certain symplectic invariant of the triple of subspaces, which can be computed in an explicit algebraic way.

A relevant example to keep in mind when the conditions of the theorem above are met is when $N \subset \tilde{N}$. In this case if $\lambda$ satisfies the transversality conditions for $\tilde{N}$, then it satisfies the transversality conditions for $N$ automatically. In particular, $N$ can be just the product of two points $N=\{q_0\}\times \{q_1\}$, for which transversality conditions are trivially satisfied. This allows us to compute the difference between indices of the second variation for the problem with moving and fixed end-points.

Theorem~\ref{thm: comparison theorem} has many interesting applications and allows to have a fresh view even on some classical results. When we consider a graph-parametrized problem~\eqref{eq:graph_functional}-\eqref{eq:graph_boundary} and reformulate it as problem on an interval~\eqref{eq:control}-\eqref{eq:functional}, the structure of the graph $\cG$ is completely encoded in the boundary condition $N$. It is often the case that a single critical point satisfies two graph-parametrized problems with different boundary constraints or even with different underlying graphs.

For example, we can introduce an extra vertex on an edge of a graph, and assume that this vertex is free. This obviously does not change the possible critical points. However, now, we can compare it to a problem where the new vertex is fixed. The simplest example of this technique would be to consider a fixed boundary optimal control problem on an interval~\eqref{eq:graph_functional}-\eqref{eq:graph_boundary} with $N=\{q_0\}\times \{q_1\}$, introduce several free vertices in the interior of the interval and compare with a problem where each of the vertices is fixed as depicted in Figure~\ref{fig:discrete}.

\begin{figure}[h]
\begin{center}
	\input{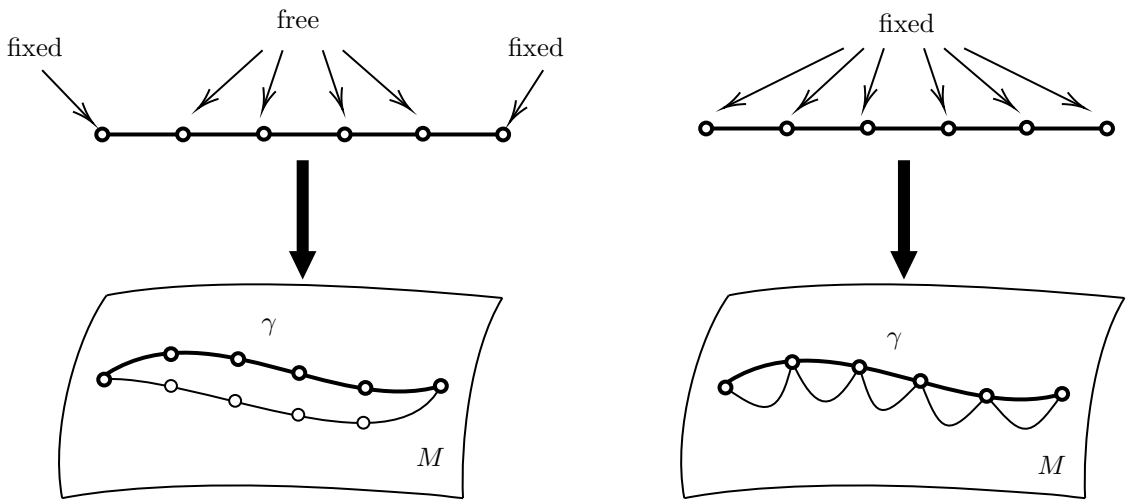}	
\end{center}
	\caption{Variation of $\gamma$ in the original problem and a problem with extra fixed vertices.\label{fig:discrete}}
\end{figure}

In order to formulate the next result, we need the definition of conjugate times and conjugate points.

\begin{definition}
	Given $\mu \in T^*M$, denote by $\Pi_{\mu}$ the vertical subspace $T_{\mu}(T_{\pi(\mu)}^*M)$. Given an extremal $\lambda:[0,1]\to T^* M$ of an optimal control problem~\eqref{eq:control}-\eqref{eq:functional}, we say that a moment of time $t\in[0,1]$ is \textit{conjugate} if the map
	$$
	\pi_* \circ (\Psi_t)_*|_{\Pi_{\lambda(0)}}
	$$
	has a kernel. The corresponding point $q(t) = \pi(\lambda(t))$ is said to be a \textit{conjugate point}. 
\end{definition}

To simplify notation we will denote by $\Theta_t = (\Psi_t)_*$ the differential (or the \emph{linearisation}) of the extremal flow. It is straightforward to check that the tangent space  $T_{\underline{\lambda}}\Gamma(\Psi)$ is actually the graph of the linear map $\Theta_t: T_{\lambda} T^*M \to T_{\Psi_t(\lambda)}T^*M$. We will denote by $\Gamma(\Theta_t)$ said graph and by $\Gamma(\Theta)$ the graph at time $t=1$.

A consequence of Theorem~\ref{thm: comparison theorem} is the following result. 

\begin{thm}[Discretization]
	\label{thm:discrete}
	Let $\lambda:[0,1]\to T^*M$ be an extremal for~\eqref{eq:control}-\eqref{eq:functional} with $N=\{q_0\}\times \{q_1\}$ and let $\Xi =\{t_0,\dots, t_n\}$ be a partition of $[0,1]$. Denote by $\Theta_{i+1,i}$ the restriction to the interval $[t_i,t_{i+1}]$ of the differential of the extremal flow in~\eqref{eq:hamilton}. The following formula holds:
	\begin{equation}
		\label{eq: discretization formula}
		\ind^- Q \ge \sum_{i=0}^{n-1} i \big(		
		\Theta_{i+1,i}^{-1}(\Pi_{i+1}),\Pi_{i},\Theta_{i,i-1} \circ \dots \circ  \Theta_{1,0}(\Pi_{0})\big), 
	\end{equation} 
	where $\Pi_i = T_{\lambda(t_i)}(T^*_{\pi(\lambda(t_i))}M) \simeq T^*_{\pi(\lambda(t_i))}M$. Moreover, equality holds if $\max_i|t_{i+1}-t_i|$ is sufficiently small and no $t_i$ is a conjugate time.
\end{thm}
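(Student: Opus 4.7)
The plan is to realise the problem as a single standard optimal control problem in a product manifold and then invoke Theorem~\ref{thm: comparison theorem} once. I would put $\tilde q_i(s)=q(t_i+s(t_{i+1}-t_i))$, $i=0,\dots,n-1$, and assemble $\tilde q=(\tilde q_0,\dots,\tilde q_{n-1})\colon [0,1]\to M^n$; the reparametrised dynamics and Lagrangian split between the factors, so the linearised flow on $T^*M^n$ is $\tilde\Theta=\Theta_{1,0}\oplus\cdots\oplus\Theta_{n,n-1}$. Two boundary submanifolds of $M^n\times M^n$ share $\lambda$ as an extremal: $N_{\mathrm{free}}$, imposing $\tilde q_0(0)=q_0$, $\tilde q_{n-1}(1)=q_1$ together with the continuity relations $\tilde q_i(1)=\tilde q_{i+1}(0)$ (so that $Q_{N_{\mathrm{free}}}=Q$), and its submanifold $N_{\mathrm{fix}}\subset N_{\mathrm{free}}$ in which every intermediate vertex is in addition frozen to $q(t_{i+1})$ (under which $Q_{N_{\mathrm{fix}}}$ decouples into $n$ fixed endpoint Hessians on the intervals $[t_i,t_{i+1}]$).

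Next I would apply Theorem~\ref{thm: comparison theorem} with $N=N_{\mathrm{fix}}$ and $\tilde N=N_{\mathrm{free}}$. Since $TN_{\mathrm{fix}}\subset TN_{\mathrm{free}}$ and $\dim TN_{\mathrm{fix}}=0$, both dimensional corrections vanish and the formula reduces to
\begin{equation*}
\ind^- Q \;=\; \ind^- Q_{N_{\mathrm{fix}}} \;+\; i\bigl(TA(N_{\mathrm{fix}}),T\Gamma(\tilde\Psi),TA(N_{\mathrm{free}})\bigr) \;+\; k_0,
\end{equation*}
with $k_0\ge 0$ and $\ind^- Q_{N_{\mathrm{fix}}}\ge 0$. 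The inequality \eqref{eq: discretization formula} then follows once the Maslov triple index is identified with the sum on its right hand side.

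This identification is the heart of the proof. The ambient symplectic space at $\underline{\tilde\lambda}$ rearranges as
\[
\Sigma=(V_0,-\sigma)\oplus(V_n,\sigma)\oplus\bigoplus_{i=1}^{n-1}\bigl(V_i\oplus V_i,\;-\sigma\oplus\sigma\bigr),\qquad V_i=T_{\lambda(t_i)}(T^*M).
\]
In this presentation $TA(N_{\mathrm{fix}})$ is a pure product of verticals $\Pi_i$ (with $\Pi_i$ doubled at the intermediate indices), $TA(N_{\mathrm{free}})$ replaces each pair $\Pi_i\oplus\Pi_i$ by the diagonal $\Delta_i=\{(v,v):v\in V_i\}$, and $T\Gamma(\tilde\Psi)$ is the graph of $\tilde\Theta$. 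I would then perform iterated symplectic reduction by the Lagrangians $\Delta_{n-1},\dots,\Delta_1$, which collapses each doubled factor to a single copy of $V_i$ and transports the graph of $\tilde\Theta$ to the Lagrangians $\Theta_{i,i-1}\circ\cdots\circ\Theta_{1,0}(\Pi_0)$ and $\Theta_{i+1,i}^{-1}(\Pi_{i+1})$ meeting $\Pi_i$ in the $i$-th reduced factor. Additivity of the Maslov triple index under this reduction then yields
\[
i\bigl(TA(N_{\mathrm{fix}}),T\Gamma(\tilde\Psi),TA(N_{\mathrm{free}})\bigr)=\sum_{i=0}^{n-1} i\bigl(\Theta_{i+1,i}^{-1}(\Pi_{i+1}),\Pi_i,\Theta_{i,i-1}\circ\cdots\circ\Theta_{1,0}(\Pi_0)\bigr),
\]
with the convention that an empty composition is the identity, so that the $i=0$ summand vanishes trivially.

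For the equality statement, I would assume $\max_i(t_{i+1}-t_i)$ small and no $t_i$ conjugate. Assumption~\ref{ass:legendre} together with \cite[Theorem~20.1]{bookcontrol} imply that each short piece is a strict local minimizer, giving $\ind^- Q_{N_{\mathrm{fix}}}=0$. The no-conjugate-time hypothesis yields $\Pi_i\cap\Theta_{i+1,i}^{-1}(\Pi_{i+1})=\{0\}$ for every $i$; propagating this through the product decomposition forces $TA(N_{\mathrm{fix}})\cap T\Gamma(\tilde\Psi)=\{0\}$ and hence $k_0=0$. The main obstacle in the plan is the symplectic-linear reduction underlying the additive decomposition of the Maslov triple index: it is a finite-dimensional computation, but tracking the diagonals $\Delta_i$ and the signs of the forms $\pm\sigma$ in each reduced factor requires care.
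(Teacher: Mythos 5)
Your strategy — lift to $M^n\times M^n$, apply Theorem~\ref{thm: comparison theorem} once with $N_{\mathrm{fix}}$ against $N_{\mathrm{free}}$, then decompose the single Maslov index — is a genuine alternative to the paper's route, which instead proves a two-piece splitting lemma (Proposition~\ref{prop: splitting curve}) and iterates it; in that iterative scheme one only ever computes Maslov forms on a fixed-size space $\big(T_{\lambda(t)}T^*M\big)^4$ and the conjugate-time terms $\dim(\Theta_{j,0}(\Pi_0)\cap\Pi_j)$ appear explicitly as corrections. The first half of your plan is fine: with $\dim T_{\pi(\underline\lambda)}N_{\mathrm{fix}}=0$ the dimensional terms of~\eqref{eq:main_index_formula} drop and the formula $\ind^-Q=\ind^-Q_{N_{\mathrm{fix}}}+i\bigl(TA(N_{\mathrm{fix}}),\Gamma(\tilde\Theta),TA(N_{\mathrm{free}})\bigr)+k_0$ is correct.

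The gap is precisely where you flag it. First, the mechanism is not valid as written: \eqref{eq: symplectic reduction maslov index} lets you reduce by an isotropic subspace contained in $L_0\cap L_2$, but $\Delta_i$ is Lagrangian in $V_i\oplus V_i$ and is \emph{not} contained in $TA(N_{\mathrm{fix}})$, which at the $i$-th doubled factor is $\Pi_i\oplus\Pi_i$; moreover reducing by any Lagrangian of a factor kills that factor entirely rather than producing a copy of $V_i$. Second, and more seriously, the claimed identity $i\bigl(TA(N_{\mathrm{fix}}),\Gamma(\tilde\Theta),TA(N_{\mathrm{free}})\bigr)=\sum_{i}i\bigl(\Theta_{i+1,i}^{-1}(\Pi_{i+1}),\Pi_i,\Theta_{i,0}(\Pi_0)\bigr)$ is false in general. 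Comparing your formula with the one the paper obtains by iterating Proposition~\ref{prop: splitting curve}, and observing that $TA(N_{\mathrm{fix}})\cap\Gamma(\tilde\Theta)\cong\bigoplus_i\bigl(\Pi_{i-1}\cap\Theta_{i,i-1}^{-1}(\Pi_i)\bigr)$ vanishes as soon as the partition is fine (so $k_0=0$ under that hypothesis alone), one finds that for a fine partition
\[
i\bigl(TA(N_{\mathrm{fix}}),\Gamma(\tilde\Theta),TA(N_{\mathrm{free}})\bigr)=\sum_{i}i\bigl(\Theta_{i+1,i}^{-1}(\Pi_{i+1}),\Pi_i,\Theta_{i,0}(\Pi_0)\bigr)+\sum_{j=1}^{n-1}\dim\bigl(\Theta_{j,0}(\Pi_0)\cap\Pi_j\bigr),
\]
so the two sides differ by exactly the conjugate-time multiplicities: the decomposition is an equality only under the no-conjugate-time hypothesis, and without establishing at least the inequality $\ge$, neither the bound~\eqref{eq: discretization formula} nor the equality case follows from your argument. (A related slip is attributing $k_0=0$ to the no-conjugate-time hypothesis; it is the fineness of the partition that kills $k_0$, while the no-conjugate-time hypothesis is what makes the Maslov index decompose additively.) This is the point where the paper's iterative two-piece argument pays off: it never needs the all-at-once additivity and tracks the correction $\dim(\Theta_{j,0}(\Pi_0)\cap\Pi_j)$ piece by piece.
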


As previously discussed a necessary condition for minimality under Assumptions~\ref{ass:graph}-\ref{ass:legendre} is $\ind^- Q = 0$. For this reason a necessary condition for minimality of a critical point would be the equality to zero of the right hand side in formula~\eqref{eq: discretization formula}. In practice, this allows to determine non-optimal solutions and greatly reduce the number of candidates for the minimal solution.

Another example of this type is given by the $k$-th iterate $\gamma^k$ of a periodic extremal trajectory $\gamma$. If $\gamma$ has period $T$, we can view $\gamma^k$ as a periodic trajectory of period $kT$. Hence it is a graph-parametrized problem with the graph having one edge of length $kT$ and a single vertex. We can add $k$ more equispaced vertices and compare the problem to the $k$ copies of smaller circle graphs which correspond to $\gamma$ as depicted in Figure~\ref{fig:iteration}. 
\begin{figure}[h]
\begin{center}
\input{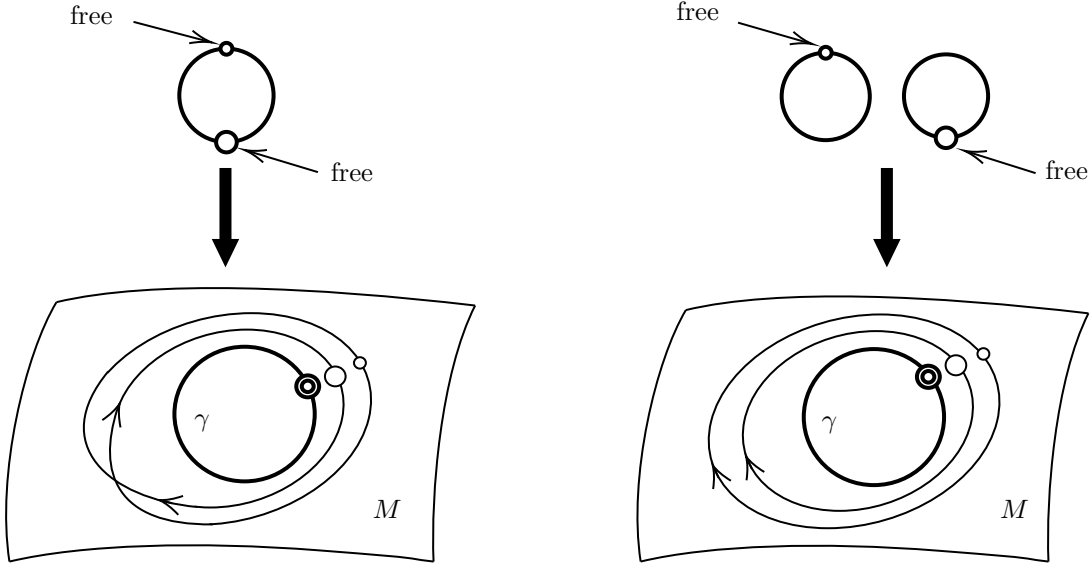}
\end{center}
\caption{Variations of a periodic extremal trajectory $\gamma$ as a periodic  trajectory ran twice (left) and as two separate periodic trajectory (right).\label{fig:iteration}}
\end{figure}
We denote by $\Theta$ the linearization of the flow \cref{eq:hamilton} along $\gamma$ at the moment of time $t=T$. Again an application of Theorem~\ref{thm: comparison theorem} gives us the following iteration formulae:
\begin{thm}[Iteration Formulae]
	\label{thm:iteration}
	Let $k > 1$ and $\gamma$ be an extremal curve in the problem~\eqref{eq:control}-\eqref{eq:functional} with $N = \Delta$, where $\Delta\subset M\times M$ is the diagonal of $M$. The index of the $k-$th iteration of $\gamma$ as a periodic trajectory satisfies:
	\begin{equation*}
		\begin{split}
			\ind^-Q_{\gamma^k} - k\, \ind^-Q_{\gamma}
			&= \sum_{j=1}^k i \big(\Gamma(\Theta^{j-1}),T_{\underline{\lambda}}A(\Delta),\Gamma(\Theta^{j})\big)-\dim(M) + \dim(\ker(\Theta^{j-1}-1))\\
			 &= \sum_{j=1}^{k-1} \dim(M)-\dim(\ker (\Theta-\omega^j))-i \big(\Gamma(\Theta),T_{\underline{\lambda}}A(\Delta),\Gamma(\omega^j\Theta)\big) .
		\end{split}
	\end{equation*} 
	Where $\omega$ is a primitive $k$-th root of unity.
\end{thm}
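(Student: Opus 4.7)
The plan is to derive the two identities separately, each via a structural decomposition of the $k$-fold iterate combined with Theorem~\ref{thm: comparison theorem}.

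For the first identity, I would realise $\gamma^k$ as an extremal of a graph-parametrised problem on a circle graph with $k$ equally spaced vertices placed at times $t_j=jT$, $j=1,\dots,k$, and $k$ edges of length $T$. If all these vertices are free, the reduction of Subsection~\ref{subsec:reduction_to_1d} gives back the periodic problem for $\gamma^k$ itself, with index $\ind^-Q_{\gamma^k}$; if instead all vertices are pinned to the values $\gamma(t_j)$, the problem decomposes into $k$ independent copies of $\gamma$ on $[0,T]$ with fixed endpoints, whose total index is $k\,\ind^-Q_{\mathrm{fix}}$. I would then apply Theorem~\ref{thm: comparison theorem} inductively, pinning one additional vertex at each step. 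By the canonical identification of the symplectic spaces along the periodic flow, the contribution at the $j$-th vertex simplifies to $i(\Gamma(\Theta^{j-1}),T_{\underline{\lambda}}A(\Delta),\Gamma(\Theta^j)) - \dim M + \dim\ker(\Theta^{j-1}-I)$, combining the Maslov triple index with the $\dim(T_{\underline\lambda}N\cap T_{\underline\lambda}\tilde N)-\dim T_{\underline\lambda}N+k_0$ correction produced by Theorem~\ref{thm: comparison theorem}. A separate application of the same theorem to the one-period extremal $\gamma$, comparing $N=\Delta$ with $\tilde N=\{q_0\}\times\{q_0\}$, expresses $\ind^-Q_\gamma-\ind^-Q_{\mathrm{fix}}$ in similar terms, and subtracting $k$ copies of this identity from the summed chain above yields the first formula.

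For the second identity, I would exploit the $\Z/k$-symmetry of $\gamma^k$ and perform a discrete Fourier decomposition of the space of admissible variations into $k$ isotypic sectors indexed by the $k$-th roots of unity $\omega^j$. Since the constraint structure inherited from \eqref{eq:control} is $\Z/k$-equivariant, the Hessian $Q_{\gamma^k}$ is block-diagonal with respect to this decomposition, and each sector is naturally identified with the Hessian of $\gamma$ on $[0,T]$ restricted to variations satisfying the twisted boundary condition $v(T)=\omega^j v(0)$. This twisted Hessian $Q_{\gamma,j}$ corresponds in the reduction to an interval to a Lagrangian boundary manifold $N_j$ whose tangent annihilator is the graph of multiplication by $\omega^j$. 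Applying Theorem~\ref{thm: comparison theorem} in each sector with $N=\Delta$ (for which $Q_{\gamma,0}=Q_\gamma$) and $\tilde N=N_j$ for $j=1,\dots,k-1$, summing over $j$, and collecting the $\dim M-\dim\ker(\Theta-\omega^j)$ dimension corrections from the generic formula produces the second identity.

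The main obstacle will be the bookkeeping of Maslov indices and dimensional terms across the various symplectic spaces: Theorem~\ref{thm: comparison theorem} produces each correction in a product space $T_{\lambda(0)}(T^*M)\times T_{\lambda(t_j)}(T^*M)$, and the iterated or twisted identifications along the periodic flow must be carried out carefully to recognise the stated Maslov triples on the single symplectic space $(T_{\lambda(0)}(T^*M))^2$. Extra care is needed in the degenerate cases where $\Theta^{j-1}$ has eigenvalue $1$ or $\Theta$ has eigenvalue $\omega^j$, since intermediate Lagrangians then intersect nontrivially and the $k_0$ correction becomes nonzero. Finally, for the Fourier approach one must justify that the $\Z/k$-decomposition is compatible with the admissible-variation structure inherited from the control system and exactly diagonalises $Q_{\gamma^k}$.
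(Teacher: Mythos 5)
Your proposal takes a genuinely different route from the paper in both halves, and while the overall algebraic strategy (telescoping $\ind^-Q_{\gamma^k}-k\,\ind^-Q_\gamma$ through intermediate problems) is sound, there are concrete gaps that would stop a direct write-up.

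For the first identity, the paper does not filter by pinning the $k$ vertices of a cycle graph one at a time. Instead, it works in $M^2\times M^2$ and compares two \emph{gluing} constraints: $\Delta^\circlearrowright=\{(q_1,q_2,q_2,q_1)\}$ (cyclic gluing, giving $\gamma^k$ as a periodic orbit) versus $\Delta^2=\{(q_1,q_2,q_1,q_2)\}$ (independent gluing, giving $\gamma^{k-1}$ and $\gamma$ as separate periodic orbits). One application of \Cref{thm: comparison theorem} then gives the recursion $\ind^-Q_{\gamma^k}-\ind^-Q_{\gamma^{k-1}}-\ind^-Q_\gamma = i(\Gamma^{k-1},T_{\underline\lambda}A(\Delta),\Gamma^k)-n+\dim\ker(\Theta^{k-1}-1)$ in a fixed $4n$-dimensional symplectic space, and the Maslov triple is computed explicitly from the defining equations. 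Your filtration, by contrast, applies \Cref{thm: comparison theorem} $k$ times in $(T^*M)^{2k}$ and then needs $k$ symplectic reductions to drop back to $4n$ dimensions. The paper's own discussion at the end of Subsection~\ref{subsec:filtration} explicitly warns that the per-vertex contributions $i(A^j_{j-1},\Gamma(\Theta_j),A^j_j)$ after reduction are \emph{not} local and contain compositions of the flow over all edges already processed; you assert that the $j$-th contribution simplifies to $i(\Gamma(\Theta^{j-1}),T_{\underline\lambda}A(\Delta),\Gamma(\Theta^j))-n+\dim\ker(\Theta^{j-1}-1)$ after subtracting $k$ copies of the $\Delta$-vs.-$\{q_0\}\times\{q_0\}$ comparison, but this is exactly the nontrivial computation that would have to be carried out, and it is not at all obvious that it yields this form rather than some equivalent but differently packaged expression.

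For the second identity, your proposal has a genuine gap. You want to decompose $Q_{\gamma^k}$ into $\mathbb Z/k$-Fourier sectors (Bott's original idea) and then apply \Cref{thm: comparison theorem} in each sector with $\tilde N=N_j$ a ``boundary manifold whose tangent annihilator is the graph of multiplication by $\omega^j$.'' But for $\omega^j\notin\mathbb R$ there is no real embedded submanifold $N_j\subset M\times M$ with this property; the twisted condition $v(T)=\omega^j v(0)$ only makes sense after complexification. \Cref{thm: comparison theorem} as stated and proved requires $N,\tilde N$ to be real embedded submanifolds and its proof goes through the extended real optimal control problem of Subsection~\ref{subsec:reduction_to_fixed}, so it does not apply to complex boundary data without a substantial extension. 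The paper deliberately stays real all the way through: it works on $M^k$, applies \Cref{thm: comparison theorem} once with real boundary manifolds $\Delta^\circlearrowright$, $\Delta^k$, $\underline q_0$, and only afterwards complexifies a \emph{finite-dimensional} Maslov form, which it block-diagonalises by the Vandermonde change of basis (\Cref{lemma: diagonalizzazione maslov}) using the Hermitian Maslov index machinery of Subsection~\ref{subsec:hermitian}. To make your route rigorous you would need either to pair conjugate sectors $j$ and $k-j$ to obtain real submanifolds of $(M\times M)^2$ (which essentially reconstructs the paper's $M^k$ argument), or to prove a Hermitian version of \Cref{thm: comparison theorem} for complex boundary Lagrangians, neither of which is addressed in the proposal.
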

The first equality is much in the spirit of \cite{CushmannDuistermaatIteration} and \cite{DuistermaatIntersectionIndex}. 
The second equality in the formula is obtained using a complexified version of Maslov index described in \Cref{section: maslov index}, yielding a result very similar to Bott's original approach given in \cite{Bott}. 

The theorems above are examples of index formulas, which try to encode the information about the index of the second variation of variational problems in terms of geometric quantities such as the Maslov index above. In the context of variational problems on 1D objects such as curves or graphs, it is possible to reduce the problem of studying index of a linear operator on an infinite-dimensional space to the study of non-autonomous linear ODEs in finite dimensional spaces. There exist various analogues of this result. For example, in the context of classical calculus of variations an infinite-dimensional version of the Morse index formulas was proven by several groups in works~\cite{DuistermaatIntersectionIndex,swanson,cox_jones_schroedinger,cox_jones_morse_bounded_domains}. In the case of strongly indefinite problems the index formulas are replaced by spectral flow theorems, which are valid both in finite~\cite{portaluri_morse_geodesics,piccone_conjugate,waterstraat_kmorse} and infinite dimensions~\cite{portaluri_waterstraat}. There are also various approaches to infinite-dimensional Morse homology~\cite{abbondandolo_infinite_morse}. A very general index theorem for optimal control problems was proven by the first and third author in~\cite{beschastnyi_morse}, which encompasses many separate classical constructions for various types of extremals~\cite{schattler-ledzewicz-book,osmolovskii,aronna_bang_singular,agrachev_stefani_bang,beschastnyi_1d}. 

A variant of Theorem~\ref{thm: comparison theorem} was proven by Baryshnikov in~\cite{baryshnikov}. His formula is true in the generic case for a graph with a finite number of edges. In the generic picture, the various intersection terms in~\eqref{eq:main_index_formula} disappear. The authors of~\cite{latushkin_index_on_graphs} study the Morse index of Schr\"odinger operators on graphs. By reducing the problem to an interval, they provide a Morse index formula as the Maslov index of a curve in a Lagrangian Grassmanian of a sufficiently big dimension. Our formulas are different from those in several significant ways. They apply to constrained variational problems, they do not require any genericity assumptions, they work with more general boundary conditions and separate the contribution to the index coming from varying the edges and the contribution to index coming from varying vertices.

The paper has the following structure. It is divided into two parts. The first part consists of~\Cref{section: preliminaries} and \Cref{section: proof applications}. \Cref{section: preliminaries} contains all the necessary information regarding symplectic geometry, Lagrangian spaces and Maslov index. This is enough to use Theorem~\ref{thm: comparison theorem} as black box.
In \Cref{section: proof applications} we focus on applications for the graph-parametrized problems. We prove Theorems~\ref{thm:discrete} and~\ref{thm:iteration} as well as a formula by Baryshnikov which reduces the dimensionality in \Cref{thm: comparison theorem} using a filtration of the vertices. This section relies only on Theorem~\ref{thm: comparison theorem} and properties of the Maslov index. The reader who only wants to understand how to compute the index and apply the main theorems can focus just on the first part of the paper. The second part is \Cref{section: proof comparison}, where we prove the main Theorem~\ref{thm: comparison theorem}. Our strategy will be to reduce the problem~\eqref{eq:control}-\eqref{eq:functional} to a problem with fixed end-points. We then derive the Jacobi equation and use it to give a geometric interpretation of various terms entering in the formula for the second variation.

	\section{Preliminaries}
	\label{section: preliminaries}
\subsection{Symplectic geometry}

\label{subsec:symplectic}

A \emph{symplectic} vector space is a finite dimensional vector space $\Sigma$ with a non degenerate skew-symmetric bilinear form $\sigma$ (the \emph{symplectic form}). A \emph{symplectic} manifold $M$, is a manifold whose tangent space $TM$ is endowed with a symplectic structure at each point (i.e $M$ together with a closed non degenerate $2-$form).

Cotangent spaces of smooth manifolds are always endowed with a symplectic structure which is given by the so called \emph{tautological} form. Call $\pi: T^*M\to M$ the canonical projection. Take $\lambda \in T^*M$ and define the $1-$form $s_\lambda (X) = \lambda(\pi_*X)$. One can check that $d s$ is non degenerate and thus $(T^*M,d s)$ is a symplectic manifold. The $2-$form $\sigma = ds$  on $T^*M$ is called the  \textit{canonical symplectic form}.

A linear map $\Psi$ between symplectic vector spaces $(\Sigma_1,\sigma_1) \to (\Sigma_2,\sigma_2)$ is called a (\emph{linear}) symplectomorphism if $\Psi^* \sigma_2 = \sigma_1$. A diffeomorphism is a symplectomorphism if its differential is a linear symplectomorphism.

A natural way to obtain diffeomorphisms is trough flows. Given a (complete) vector field $X$ one obtains a family of diffeomorphisms $\Phi_t$ by solving the ODE system 
\begin{align*}
\dot{\Phi}_t &= X(\Phi_t), \\
\Phi_0 &= Id. 
\end{align*} 
In a similar way one can produce symplectomorphisms using special classes of vector fields:
\emph{Hamiltonian} and \emph{symplectic} fields. A vector field $X$, is \emph{Hamiltonian} if there is a smooth function $H$ such that $dH(Y) = \sigma(Y,X)$ for all smooth vector fields  $Y$. $H$ is called \emph{Hamiltonian} function and $X $ is often denoted by $\vec{H}$. A vector field for which we can find a Hamiltonian only locally, i.e. in a neighbourhood  of every point, is called \emph{symplectic}. The flow of Hamiltonian and symplectic vector fields is always a one parameter group of symplectomorphisms.

Given a subspace $W$  of a symplectic vector space $\Sigma$, we can define its skew-orthogonal complement $W^\perp$ using the symplectic form. 
$$
W^{\perp} = \{u \in \Sigma: \sigma(u,w)=0, \,  \forall w \in W\}.
$$
Since the symplectic form is non degenerate, $\dim(W)+\dim(W^{\perp}) = \dim(\Sigma)$ and $(V+W)^{\perp} = V^{\perp} \cap W^{\perp}$.

Inside a symplectic vector space we distinguish the following classes of subspaces:
\begin{itemize}
	\item \textbf{Isotropic} $V$ such that $\sigma (v,w) =0$, $\forall u,v \in V$, i.e. $V\subseteq V^{\perp}$;
	\item \textbf{Lagrangian} $V$ isotropic and maximal, i.e. $V = V^\perp$;
	\item \textbf{Coisotropic} $V$ such that $V^{\perp}\subseteq V$.
\end{itemize} 

Lagrange subspaces are extremely important in symplectic geometry, their collection is a compact manifold called Lagrange Grassmannian. It is denoted by 
$$
Lag(\Sigma)=\{V \subseteq \Sigma : V = V^\perp \}.
$$ 
If $\dim(\Sigma) = 2n$ its dimension is $n(n+1)/2$. 

The following examples of Lagrangian subspaces are often considered:
\begin{exmp}
	If $\Psi$ is a linear symplectomorphism, the graph of $\Psi$ is a subspace of the product space $\Sigma_0\bigoplus \Sigma_1$. The product space can be endowed with a symplectic structure considering $(-\sigma_0)\oplus \sigma_1$. Graphs of symplectomorphisms are always Lagrangian subspaces with this choice.
\end{exmp}
\begin{exmp}
	If $N\subseteq M$ is a submanifold of a smooth manifold we can always consider the following submanifold of the cotangent bundle $T^*M$:
	\begin{equation}
		\label{def: eq annullatore}
		A(N):= \{\lambda \in T^*M : \pi(\lambda)\in N, \lambda(X) =0, \,\forall\, X \in T_{\pi{(\lambda)}}N  \}.
	\end{equation} 
	
	The tangent space to the annihilator at a point $\lambda$ is a Lagrangian subspace of $T_\lambda(T^*M)$, which means that $A(N)$ is a Lagrangian submanifold. 
	
	If $M$ is a product space i.e. $M = M_0 \times M_1$, the annihilator of a submanifold $N$ is a Lagrangian submanifold only with respect to $\sigma_0\oplus\sigma_1$, which coincides with the canonical symplectic form on $T^*(M_0\times M_1)$.
	
	To get a Lagrangian submanifold of $(T^*M_0 \oplus T^*M_1,(-\sigma_0)\oplus \sigma_1)$ one has to change the sign to the first or the second covector and thus consider the submanifold $A(N)$ defined in~\eqref{eq:ann_def}. Notice that this distinction is unnecessary when $N$ itself is  a product, i.e. when $N = N_0 \times N_1$.
\end{exmp} 
\begin{exmp}
	A particular instance of the example above is the vertical fibre i.e. the tangent space to $T^*_q M$. This space can be characterized as the annihilator of the point $q$ or as the kernel of the natural projection, $\ker \pi_*$:
	\begin{equation}
		\label{eq: fibre definition}
		\Pi_\lambda =\{\xi \in T_\lambda (T^*M) : \pi_*(\xi) =0\}.
	\end{equation}
\end{exmp}
\begin{exmp}
\label{exmp:standard}
	Consider as symplectic space $\mathbb{R}^{2n} = \{x = (p,q): p,q \in \mathbb{R}^n\}$ with the \textit{canonical} symplectic form $\sigma(x,x') = \langle p,q'\rangle -\langle q,p'\rangle$.
	Using the Euclidean scalar product we can represent $\sigma$ as:
	\begin{equation}
	\label{eq: matrix J}
	\sigma(x,x') = \langle J x, x'\rangle \quad \text{ where } \quad 
	J = \begin{pmatrix}
		0 &-1 \\ 1&0
	\end{pmatrix}
	\end{equation}
	The two subspaces $B = \{p=0\}$ and $\Pi = \{q =0\}$ are Lagrangian and any subspace of the form $V_S =\{(q,Sq)\}$ and $V'_S = \{(Sp,p)\}$ is Lagrangian provided that $S = S^*$.
\end{exmp}

	It turns out that if we take two transversal Lagrangian subspaces $L_0$ and $L_2$ there always exists a choice of basis, for which $L_0$ is $B$ and $L_2$ is $\Pi$, and such that the symplectic form $\sigma$ has the canonical form as in \cref{eq: matrix J} (see Theorem 1.15 in~\cite{bookgosson}). These coordinates are sometimes called \emph{Darboux} or \emph{symplectic}.
	
	Using this coordinates we can build charts for the Lagrange Grassmannian $Lag(\Sigma)$. The map $S\mapsto V_S$ from the space of symmetric matrices to $Lag(\Sigma)$ maps onto the set of planes transversal to $\Pi$ (see for example \cite{bookgosson} for details).	
	
\subsection{Intersection index of Lagrangian subspaces}
\label{section: maslov index}
It is well known that the all pairs of transversal Lagrangian subspaces can be mapped to each other with a linear symplectomorphism~\cite[Theorem 1.15]{bookgosson}. This is no longer true for a triple of Lagrangian subspaces.

\begin{dfn}[Maslov index]
	Take three Lagrangian subspaces $L_0,L_1,L_2$. Consider the isotropic subspace $L_1' := L_1 \cap (L_0+L_2)$, if $l_1 \in L_1'$ then $l_1 = l_0+l_2$ with $l_i \in L_i$. The following quadratic form is called the Maslov form of the triple $(L_0,L_1,L_2)$:				
	\begin{equation*}
		m(l_1') = \sigma (l_0,l_2).
	\end{equation*}
By a slight abuse of notation we will also write $m(L_0,L_1,L_2)$ instead of just $m$ when we want to be explicit about which Lagrangian subspaces are used.	
	
	The numbers $\ind^+ m$, $\ind^- m$ and $sg \, m$ and $\dim \ker m$ are invariants of the triple $(L_0,L_1,L_2)$.
	The \textit{Kashiwara index} is the signature of the Maslov form:
	\begin{equation*}
		\tau(L_0,L_1,L_2) = sg \, m = \ind^+ m-\ind^- m.
	\end{equation*} 
	The \textit{negative Maslov index} is defined as 
	$$
	i\big (L_0,L_1,L_2\big) = \ind^- m.
	$$
\end{dfn}

\begin{exmp}
	Suppose $L_0$ and $L_2$ are transversal. We can identify the symplectic space with  the standard one $(\mathbb R^{2n}, \sigma)$ as given in \cref{eq: matrix J}.	
	Since any couple of Lagrangian subspaces can be mapped into each other, we can find a symplectomorphism which simultaneously maps $L_0$ to $B$ and $L_2$ to $\Pi$.
	
	Any $L_1$ can be represented as $L_1 = \{Aq+Cp=0, q \in B, p \in \Pi\}$ where $AC^* =C A^* $ and $rank[A,C] =n$.	If $A$ or $C$ is invertible then the matrix expression of the Maslov form is given by $-A^{-1}C$ or $C^{-1}A$ respectively, which have the same signature of $\mp AC^*$.			
\end{exmp}

The Kashiwara index and the Maslov index have the following properties:
\begin{itemize}
	\item \textbf{Alternating} $\tau(L_{s(0)},L_{s(1)},L_{s(2)}) =(-1)^{sg(s)}\tau(L_0,L_1,L_2)$, where $s$ is a permutation.
	\item \textbf{Cocycle property}~\cite[Theorem 1.32]{bookgosson}
	\begin{equation}
		\label{eq: cocycle kashiwara} \tau(L_0,L_1,L_2)-\tau(L_1,L_2,L_3)+ \tau(L_0,L_2,L_3)- \tau(L_0,L_1,L_3) =0.
	\end{equation}
	\item \textbf{Relation between the negative index} \cite[Lemma 5]{agrachev_quadratic_paper}\begin{equation}\label{eq: relation kashiwara maslov}\begin{aligned}
			\tau(L_0,L_1,L_2) &= -2\, i(L_0,L_1,L_2) +\dim(L_1 \cap (L_0+L_2))-\dim\ker m\\ 
			&=-2 \, i(L_0,L_1,L_2)+n-\dim(L_0\cap L_2)-\dim(L_0 \cap 	L_1)\\ & \quad-\dim(L_1 \cap L_2)+2\dim(L_0\cap L_1 \cap L_2).
		\end{aligned}
	\end{equation}
	\item \textbf{Symplectic reduction}
	If $V \subseteq L_0\cap L_2$ is an isotropic subspace we can consider $L^V:= (L\cap V^{\perp}+ V)/V$ which is a Lagrangian subspace of the reduced space. It holds that:
	\begin{equation}
		\label{eq: symplectic reduction maslov index}
		i\big(L_0,L_1,L_2\big) = i\big(L_0^V,L_1^V,L_2^V\big)
	\end{equation}
\end{itemize}

 The Maslov index satisfies only a \emph{generic} cocycle property. The next lemma will be used in the proof of \Cref{thm: comparison theorem} and shows that the defect of being a cocycle is measured by the intersections of the four Lagrangian subspaces one considers.
\begin{lemma}
	The following formulas hold:
	\begin{equation}
		\label{eq: coboundary index}
		\begin{split}
			\sum_{i=0}^{3} (-1)^{i+1}i \big(L_0,\dots,\hat L_i,\dots L_3\big) = \dim (L_1\cap L_3)-\dim(L_0\cap L_2)+\\+\sum_{i=0}^{3} (-1)^{i+1}\dim(L_0\cap\dots \cap\hat L_i \cap \dots \cap  L_3).
		\end{split}
	\end{equation}	
	\begin{equation}
		\label{eq: invariance cyclic permutation index}
		i\big(L_0,L_1,L_2\big) = i\big(L_1,L_2,L_0\big) = i\big(L_2,L_0,L_1\big).
	\end{equation}
	\begin{proof}
		The proof is just a computation using the cocycle identity for the signature in \cref{eq: cocycle kashiwara}. 
		
		We use formula \eqref{eq: relation kashiwara maslov} to get a relation between the signature and the index for each term on the left of~\eqref{eq: coboundary index}. The part involving intersections of pairs of Lagrangian spaces, after the summation, gives the contribution 
		\begin{equation*}
			2\Big (\dim(L_1\cap L_3)-\dim(L_0\cap L_2)\Big).
		\end{equation*}
		
		The triple intersection do not simplify and thus their coboundary appears. Sometimes it is useful to think about this remainder as the difference of the dimensions of two quotient spaces. $L_1\cap L_3$ in which we factor out the space $L_1\cap L_3 \cap L_0 + L_1 \cap L_3 \cap L_0$ and $L_0\cap L_2$ quotient by $L_0\cap L_2 \cap L_1 + L_0 \cap L_2 \cap L_3$.
		
		Next apply the first part with $L_3 = L_0$. The right hand side of the formula is then zero. Moreover the terms $i\big(L_0,L_1,L_0\big)$ and $i\big(L_0,L_0,L_2\big)$ are zero since the Maslov form is zero. 
		
		It follows that $i \big(L_0,L_1,L_2\big) = i\big(L_1,L_2,L_0\big)$, i.e. we obtain the invariance under cyclic permutations.
	\end{proof}
\end{lemma}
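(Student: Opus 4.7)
The proof plan is a straightforward application of the identity between $\tau$ and $i$ in \eqref{eq: relation kashiwara maslov} combined with the cocycle property \eqref{eq: cocycle kashiwara}. Writing $i^\epsilon := i(L_0,\dots,\widehat{L_\epsilon},\dots,L_3)$ and similarly $\tau^\epsilon$, the idea is to take the alternating sum $\sum_\epsilon (-1)^{\epsilon+1}\tau^\epsilon$, which vanishes by the cocycle identity, and substitute each $\tau^\epsilon$ via \eqref{eq: relation kashiwara maslov}. This turns an identity about $\tau$ into one about $i$ modulo the pair- and triple-intersection defects appearing in \eqref{eq: relation kashiwara maslov}.

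More precisely, for each triple $(L_a,L_b,L_c)$ with $\{a,b,c\}=\{0,1,2,3\}\setminus\{\epsilon\}$ the formula \eqref{eq: relation kashiwara maslov} gives
\[
\tau^\epsilon = -2\,i^\epsilon + n - \dim(L_a\cap L_b) - \dim(L_a\cap L_c) - \dim(L_b\cap L_c) + 2\dim(L_a\cap L_b\cap L_c).
\]
When I take the alternating sum $\sum_\epsilon (-1)^{\epsilon+1}$, the constant $n$ drops out because $\sum_\epsilon(-1)^{\epsilon+1}=0$, and the $\tau$ contribution vanishes by \eqref{eq: cocycle kashiwara}. What remains is
\[
-2\sum_\epsilon(-1)^{\epsilon+1}i^\epsilon \;=\; \sum_\epsilon(-1)^{\epsilon+1}\bigl[\text{pair terms}\bigr]_\epsilon \;-\; 2\sum_\epsilon(-1)^{\epsilon+1}\dim\bigl(L_0\cap\dots\cap\widehat{L_\epsilon}\cap\dots\cap L_3\bigr).
\]
The bookkeeping on the pair-intersections is the only nontrivial computation: each $\dim(L_a\cap L_b)$ with $\{a,b\}\subset\{0,1,2,3\}$ appears in the two triples that contain both $a$ and $b$, with signs $(-1)^{a+1}$ and $(-1)^{b+1}$ — wait, actually with the signs $(-1)^{\epsilon+1}$ for the two $\epsilon\notin\{a,b\}$. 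Those two signs are opposite except when $\{a,b\}=\{0,2\}$ or $\{1,3\}$. A direct check then shows that the pair-intersection sum collapses to $2\dim(L_0\cap L_2)-2\dim(L_1\cap L_3)$, and dividing by $-2$ produces precisely $\dim(L_1\cap L_3)-\dim(L_0\cap L_2)$, as required. The triple-intersection terms are left untouched and reappear on the right-hand side.

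For the cyclic invariance \eqref{eq: invariance cyclic permutation index}, I would specialize the first identity to $L_3=L_0$. The two terms $i(L_0,L_2,L_0)$ and $i(L_0,L_1,L_0)$ vanish because the Maslov quadratic form of a triple with a repeated Lagrangian is identically zero (any lift $l_0+l_0'$ has trivial symplectic pairing with itself as both vectors lie in $L_0$). The left-hand side becomes $i(L_0,L_1,L_2)-i(L_1,L_2,L_0)$. On the right, $\dim(L_1\cap L_0)-\dim(L_0\cap L_2)$ combined with the triple-intersection alternating sum — in which two of the four triples collapse to pair intersections $\dim(L_0\cap L_1)$ and $\dim(L_0\cap L_2)$ while the other two cancel — produces $0$. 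Hence $i(L_0,L_1,L_2)=i(L_1,L_2,L_0)$, and the second cyclic equality follows by iterating.

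The only real obstacle here is combinatorial: keeping the signs straight when matching up the pair- and triple-intersection dimensions. There are no conceptual issues because both ingredients (\eqref{eq: cocycle kashiwara} and \eqref{eq: relation kashiwara maslov}) have already been recorded in the preliminaries, and the vanishing of $i$ on triples with a repeated subspace follows directly from the definition of the Maslov form.
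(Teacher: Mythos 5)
Your proof is correct and follows essentially the same route as the paper: apply formula \eqref{eq: relation kashiwara maslov} to each term, take the alternating sum, invoke the cocycle identity \eqref{eq: cocycle kashiwara} for $\tau$, and track the pair- and triple-intersection contributions; then specialize $L_3=L_0$ for the cyclic invariance. Your sign bookkeeping for the pair intersections (only $\{0,2\}$ and $\{1,3\}$ survive since the two omitted indices must have equal parity) and the vanishing of the degenerate Maslov forms both match the paper's argument exactly.
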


The following lemma will be used in the sequel.
\begin{lemma}
	\label{lemma: maslov annullatore}
Given the standard symplectic structure $(\Sigma,\sigma) = (T_\lambda( T^*M),ds_{\lambda})$ and three submanifolds $N_0,N_1,N_2 \subset M$. Assume that $\lambda \in A(N_0)\cap A(N_1)\cap A(N_2)$ and $N_1 \subseteq N_0$ (or  $N_1 \subseteq N_2$) , then following formula holds
$$
m\big( T_\lambda A(N_0), T_\lambda A(N_1), T_\lambda A(N_2)\big) \equiv 0.
$$	

Moreover if $M = M'\times M'$, $T^*M$ is endowed with the form $(-\sigma')	\oplus \sigma'$ and $A(N_i)$ are defined as in \cref{eq:ann_def}, the same is true.
	\begin{proof}
	Let $L_0 =  T_\lambda A(N_0)$, $L_1 =  T_\lambda A(N_1)$ and $L_2 =  T_\lambda A(N_2)$. Fix some coordinates in a neighbourhood of $\lambda$ such that $ds_\lambda$ is the standard form on $\mathbb R^{2n}\simeq T_{\lambda}(T^*M)$. The subspace $L_0+L_2$ is the space:\begin{equation*}
			\begin{pmatrix}
				\nu_0 \\ X_0
			\end{pmatrix} +\begin{pmatrix}
				\nu_2 \\ X_2
			\end{pmatrix},  \qquad X_i \in T_{\pi(\lambda)} N_i, \qquad \nu_i (T_{\pi(\lambda)} N_i) =0,
		\end{equation*}
for $i=0,2$. Since the sum above should lie in $L_1\cap (L_0 +L_2)$, we have that $X_0 + X_2 = X_1 \in T_{\pi(\lambda)}N_1$ and $\nu_0+\nu_2 =\nu_1 $, with $\nu_1$ such that $\nu_1 (T_{\pi(\lambda)} N_1) =0$. If we compute now the Maslov form, we get:
		\begin{equation*}
			\left\langle J \begin{pmatrix}
				\nu_0 \\ X_0
			\end{pmatrix} ,\begin{pmatrix}
				\nu_2 \\ X_2
			\end{pmatrix} \right\rangle =  
		\langle \nu_0,X_2\rangle-\langle \nu_2,X_0\rangle.
		\end{equation*}
		Suppose without loss of generality that $N_1 \subseteq N_0$. The equation $X_0+X_2= X_1$ implies that $X_2 = X_1-X_0 \in T_{\pi(\lambda)} N_0$ and thus $\langle \nu_0,X_2\rangle =0$. Therefore the quadratic form is the zero form since:
$$
\langle\nu_2 , X_0 \rangle =\langle\nu_2 , X_0+X_2 \rangle=\langle\nu_2+\nu_0 , X_0+X_2\rangle=\langle\nu_1 , X_1\rangle=0.
$$
For the second part, we work on the cotangent bundle of $M = M'\times M'$ which is isomorphic to $T^*M' \times T^*M'$. Label the coordinates as $(\lambda_0,\lambda_1)$, call the standard form on $T^*M'$, $\sigma'$ and consider the following diffeomorphism:
\begin{equation*}
	S: (\lambda_0,\lambda_1) \mapsto (-\lambda_0,\lambda_1).
\end{equation*}
It is straightforward to check that $S^*(\sigma'\oplus \sigma ') = (-\sigma')\oplus \sigma'$ and $S$ maps $A(N_i)$ as given in \cref{def: eq annullatore}  to the corresponding $A(N_i)$ as given by \cref{eq:ann_def}. Since Maslov index is invariant with respect to the action of symplectomorphisms, the statement follows.

	\end{proof} 
\end{lemma}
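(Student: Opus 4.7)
The plan is to work in local Darboux coordinates around $\lambda$, identifying $T_\lambda(T^*M)\cong\R^{2n}$ with the symplectic form realized via the matrix $J$ of \Cref{exmp:standard}. In such coordinates each Lagrangian $L_i:=T_\lambda A(N_i)$ consists precisely of the pairs $(\nu_i,X_i)$ where $X_i\in T_{q_0}N_i$ and $\nu_i\in T^*_{q_0}M$ annihilates $T_{q_0}N_i$, with $q_0=\pi(\lambda)$. First I would unpack the Maslov form: pick $l_1\in L_1\cap(L_0+L_2)$, decompose $l_1=l_0+l_2$ with $l_i=(\nu_i,X_i)\in L_i$, and write
\[
m(l_1)=\sigma(l_0,l_2)=\langle \nu_0,X_2\rangle-\langle \nu_2,X_0\rangle.
\]
The equality $l_0+l_2=l_1$ translates componentwise into $X_0+X_2=X_1\in T_{q_0}N_1$ and $\nu_0+\nu_2=\nu_1$ with $\nu_1\perp T_{q_0}N_1$.

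The next step is to use the hypothesis $N_1\subseteq N_0$ to make both terms vanish. From $X_1\in T_{q_0}N_1\subseteq T_{q_0}N_0$ and $X_0\in T_{q_0}N_0$, we get $X_2=X_1-X_0\in T_{q_0}N_0$, so $\langle\nu_0,X_2\rangle=0$ because $\nu_0$ annihilates $T_{q_0}N_0$. For the remaining term, the same inclusion gives $\langle\nu_0,X_0+X_2\rangle=\langle\nu_0,X_1\rangle=0$; combined with $\langle\nu_2,X_2\rangle=0$ (since $X_2\in T_{q_0}N_2$), this lets me rewrite
\[
\langle \nu_2,X_0\rangle=\langle \nu_2+\nu_0,X_0+X_2\rangle=\langle \nu_1,X_1\rangle=0,
\]
where the last equality uses $\nu_1\perp T_{q_0}N_1\ni X_1$. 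Hence $m\equiv 0$. The case $N_1\subseteq N_2$ is then automatic from the alternating property of the Maslov form $m(L_0,L_1,L_2)=-m(L_2,L_1,L_0)$.

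For the second statement on $M=M'\times M'$ with the twisted symplectic form $(-\sigma')\oplus\sigma'$, I would introduce the fibrewise sign flip $S:T^*M'\times T^*M'\to T^*M'\times T^*M'$, $(\lambda_0,\lambda_1)\mapsto(-\lambda_0,\lambda_1)$. A direct check shows $S^*(\sigma'\oplus\sigma')=(-\sigma')\oplus\sigma'$, so $S$ is a symplectomorphism between the two sign conventions on the product cotangent bundle, and it maps the annihilator of $N_i$ as given by~\eqref{def: eq annullatore} onto the one given by~\eqref{eq:ann_def}. Since the Maslov form is a symplectic invariant, the vanishing already established in the first part transports to the twisted setup.

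The main obstacle I anticipate is purely bookkeeping rather than substantive: getting the tangent spaces $T_\lambda A(N_i)$ correctly described in coordinates and tracking the sign conventions between the two definitions of the annihilator. Once these identifications are pinned down, the chain of equalities that kills both terms in $\sigma(l_0,l_2)$ is very short, and the symmetry $N_1\subseteq N_2$ case and the product case both reduce to the single computation in the first paragraph.
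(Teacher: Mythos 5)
Your proposal is correct and follows essentially the same route as the paper: the same Darboux-coordinate setup, the same description of the $L_i$'s, the same expression $\langle\nu_0,X_2\rangle-\langle\nu_2,X_0\rangle$ for the Maslov form, the same observation that $X_2\in T_{q_0}N_0$ kills the first term, and the same chain of equalities (justified by $\langle\nu_2,X_2\rangle=0$ and $\langle\nu_0,X_0+X_2\rangle=0$) to kill the second; the second part via the sign-flip $S$ is also identical. The only difference is cosmetic: you make explicit the use of the alternating property $m(L_0,L_1,L_2)=-m(L_2,L_1,L_0)$ to deduce the $N_1\subseteq N_2$ case, where the paper just says ``without loss of generality.''
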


\subsection{Hermitian Maslov index}

\label{subsec:hermitian}

In the proof of Iteration Formulae it will be convenient to introduce complex coefficients and work on $\mathbb{C}^{2n}$. Maslov form extends to this setting in the obvious way. Consider the complex version of the symplectic form:
\begin{equation*}
	\sigma_\mathbb{C}(X,Y) = \sigma (\bar X, Y), \quad 	X,Y \in \mathbb{C}^{2n}.
\end{equation*}
If $V$ is Lagrangian as a real vector space, then $V \otimes \mathbb{C}$ is Lagrangian with respect to the new symplectic form. If we take three Lagrangian subspaces the Maslov form is still well defined. Suppose that $\lambda_1 \in L_1 \cap (L_0+L_2)$:
\begin{equation*}
	m(\lambda_1) = \sigma_\mathbb{C}(\lambda_0,\lambda_2) = \sigma (\bar\lambda_0,\lambda_2).
\end{equation*}

\begin{lemma}
	\label{lemma: hermitian maslov}
	The Maslov form is a Hermitian form.
	\begin{proof}
		Notice that $\overline{m(\lambda_1)} = \sigma (\lambda_0,\bar\lambda_2) = m(\bar \lambda_1)$. We have to show that $m(\lambda_1) =m( \bar \lambda_1)$ but this follows from the fact the subspaces are Lagrangian.
		\begin{equation*}
			\begin{split}
				m(\lambda_1) -m(\bar \lambda_1)&= \sigma (\bar \lambda_0,\lambda_2)-\sigma(\lambda_0,\bar\lambda_2) = \sigma(\bar \lambda_0+ \bar \lambda_2,\lambda_2)-\sigma(\lambda_0,\bar\lambda_2+\bar \lambda_0) \\ &= \sigma(\bar \lambda_1,\lambda_2)-\sigma(\lambda_0,\bar \lambda_1) = \sigma (\bar \lambda_1,\lambda_1) =0.
			\end{split}
		\end{equation*}
		This means that the quadratic form is real and thus $m$ is Hermitian.
	\end{proof}
\end{lemma}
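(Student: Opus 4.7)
The plan is to verify that the complexified Maslov form is the quadratic form of a Hermitian sesquilinear form, and for this it suffices to show it takes only real values.

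First I would unpack definitions: extend $\sigma$ $\mathbb{C}$-bilinearly to $\mathbb{C}^{2n}$, so that $\sigma_\mathbb{C}(X,Y):=\sigma(\bar X, Y)$ is conjugate-linear in $X$ and $\mathbb{C}$-linear in $Y$, i.e.\ sesquilinear. The associated quadratic form $m(\lambda_1)=\sigma_\mathbb{C}(\lambda_0,\lambda_2)$, where $\lambda_1=\lambda_0+\lambda_2$ with $\lambda_i\in L_i$, is well-defined independently of the decomposition by the standard argument (the Lagrangian property of $L_0,L_1,L_2$ makes the sum unique modulo $L_0\cap L_1 + L_1\cap L_2$, on which $m$ vanishes).

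Next I would compute the conjugate directly from sesquilinearity:
\begin{equation*}
\overline{m(\lambda_1)} \;=\; \overline{\sigma(\bar\lambda_0,\lambda_2)} \;=\; \sigma(\lambda_0,\bar\lambda_2) \;=\; m(\bar\lambda_1),
\end{equation*}
where I use that $\bar\lambda_1=\bar\lambda_0+\bar\lambda_2$ is a legitimate decomposition of $\bar\lambda_1\in L_1$ into pieces in $L_0,L_2$ (since these subspaces are complexifications of real Lagrangians and are thus stable under complex conjugation). So to conclude that $m$ is real-valued it remains to prove $m(\lambda_1)=m(\bar\lambda_1)$.

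For that final step, I would write
\begin{equation*}
m(\lambda_1)-m(\bar\lambda_1) \;=\; \sigma(\bar\lambda_0,\lambda_2)-\sigma(\lambda_0,\bar\lambda_2),
\end{equation*}
and exploit that $L_0\otimes\mathbb{C}$ and $L_2\otimes\mathbb{C}$ are Lagrangian for the $\mathbb{C}$-bilinear extension of $\sigma$, so $\sigma(\bar\lambda_0,\lambda_0)=0$ and $\sigma(\lambda_2,\bar\lambda_2)=0$. Adding these zeros into the two summands regroups the expression as $\sigma(\bar\lambda_1,\lambda_2)-\sigma(\lambda_0,\bar\lambda_1)=\sigma(\bar\lambda_1,\lambda_1)$, which vanishes because $L_1\otimes\mathbb{C}$ is Lagrangian. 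Hence $m(\lambda_1)=\overline{m(\lambda_1)}\in\mathbb{R}$ for every $\lambda_1$, and a complex-valued quadratic form that is real on the diagonal comes from a Hermitian sesquilinear form by polarization, proving the lemma.

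The one place that requires attention is bookkeeping of which form ($\sigma$ extended $\mathbb{C}$-bilinearly versus $\sigma_\mathbb{C}$ sesquilinear) is in play at each step, but there is no real obstacle: once sesquilinearity and the Lagrangian identities $\sigma(\bar v, v)=0$ for $v$ in the complexification of a real Lagrangian are invoked, the computation is essentially forced.
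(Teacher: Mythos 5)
Your proof is correct and follows essentially the same route as the paper's: compute $\overline{m(\lambda_1)} = m(\bar\lambda_1)$ by sesquilinearity, then show $m(\lambda_1) = m(\bar\lambda_1)$ by adding the isotropy identities $\sigma(\bar\lambda_0,\lambda_0)=\sigma(\bar\lambda_2,\lambda_2)=0$ to regroup the difference as $\sigma(\bar\lambda_1,\lambda_1)=0$. The only (harmless) difference is that you make explicit some steps the paper leaves implicit, namely that the complexified Lagrangians are conjugation-stable and that real values on the diagonal suffice for the Hermitian property.
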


Thus the eigenvalues of $m$ are real and the index and the signature is well defined exactly as in the real case. Here we list some of the properties of the $\sigma_\mathbb{C}$ and  complex Lagrange subspaces:
\begin{itemize}
	\item \textbf{Darboux basis} Since $\sigma_{\mathbb{C}}$ is non degenerate, every time two Lagrangian subspaces $L_0,L_1$ are considered, there exists a basis in which $\sigma_{\mathbb{C}}$ has the standard form.
	\item \textbf{Grassmannian of Lagrangian subspaces} 
	In the real case the Lagrange Grassmannian is a homogeneous space diffeomorphic to $U(n)/O(n)$. It turns out that the \emph{complex} one is diffeomorphic to $U(n)$ (and thus still real as a manifold). We can diagonalize the symplectic form obtaining:
	\begin{equation*}
		\frac{1}{2}\begin{pmatrix}
			i & -1 \\
			1 & -i
		\end{pmatrix} \begin{pmatrix}
			0 &-1 \\ 
			1 &0
	\end{pmatrix}\begin{pmatrix}
	-i &1 \\
	-1 &i
	\end{pmatrix} = \begin{pmatrix}
	i &0 \\ 0 & -i
	\end{pmatrix}.
	\end{equation*}
	Thus we have two subspaces on which $\sigma_\mathbb{C}$ is non degenerate, the eigenspace $V_i$ relative to $i$ and $V_{-i}$, the one relative to $-i$. It is thus clear that if $V$ is Lagrangian, $V$ must be transversal to both the eigenspaces. So it can always be represented as a graph of an invertible linear operator from $V_i \to V_{-i}$ (or vice versa). It remains to check what kind of linear maps are allowed. Using again the coordinates in which $\sigma_{\mathbb{C}}$ is diagonal we get:
	\begin{equation*}
		\sigma_{\mathbb{C}}\Big(\begin{pmatrix}
			x\\ R x 
		\end{pmatrix},\begin{pmatrix}
		y\\ R y 
	\end{pmatrix}\Big)= i \langle\bar{x},y \rangle -i \langle R^*\bar{R} \bar{x},y\rangle = i \langle (1-R^*\bar{R})\bar{x},y\rangle .
	\end{equation*} 
	Since we need this quantity to be zero for any $x, y \in \mathbb{C}^n$ we get $\bar{R}R^* =1$ and thus $R \in U(n)$.
	It follows that the complex Grassmannian is diffeomorphic to $U(n)$.
	\item \textbf{Atlas for the Lagrange Grassmannian} Take two transversal subspaces $L_0,L_1$. Using Darboux coordinates, we can build an affine chart as in the real case (compare with \cref{exmp:standard}). This time though we consider Hermitian matrices. The subspaces $V_S =\{(x,Sx): S = \bar S^*, x \in L_0\} $ are Lagrangian subspaces. 
	 
	\item \textbf{Properties of Kashiwara index} The proof of the cocycle property given in \cite[Theorem 1.32]{bookgosson} works in the Hermitian case as well since our quadratic forms are all real by \Cref{lemma: hermitian maslov}. It suffice to substitute the word \emph{symmetric} with the word \emph{Hermitian}. In particular all the properties listed in the previous section remain true in this setting, with real dimensions replaced by complex ones.

\end{itemize}

\section{Applications and proofs of \Cref{thm:discrete} and \Cref{thm:iteration}}
\label{section: proof applications}
\subsection{Reduction of the problem on a graph to a problem on an interval}
\label{subsec:reduction_to_1d}

Let us transform problem~\eqref{eq:graph_functional}-\eqref{eq:graph_boundary} to a problem on a single interval $[0,1]$. 
 We can assume without any loss of generality that $l_e = 1$ for every $e\in \cG_1$ simply by rescaling the time interval $[0,l_e]$, when $l_e < +\infty$. If $l_e =+ \infty$ we can compactify the semi-line $[0,+\infty)$ via a suitable change of coordinates in the time variable, provided that the compactification satisfies Assumption \ref{ass:legendre}, fact that may depend on the choice of change of variables. This will merely change the Lagrangians $\ell^e$ and  the vector fields $f^e_{t,u}$ and they can be redefined accordingly. 

Let us ignore for the moment the boundary conditions~\eqref{eq:graph_boundary} and treat the restriction of the optimal control problem to every edge as an independent problem. Define the map $\pi_e : M^{\cG_1} \to M_e$, it is the projection on the copy of $M$ relative to edge $e$. Then functional $J[u]$ and the control system can be seen as a part of an optimal control problem on $M^{\cG_1}$. We define the new Lagrangian $\ell: [0,1] \times M^{\cG_1} \times (\R^k)^{\cG_1}$ as
$$
\ell(t,q,u) = \sum_{e\in\cG_1} \ell^e(t,q_e,u_e)
$$
and a new family of time dependent vector fields $f^t_u \in Vec(M^{\cG_1})$ such that for a fixed edge $e\in \cG_1$
$$
(\pi_e)_*(f^t_u) = f^e_{t,u_e}.
$$

Finally, let us consider the boundary conditions~\eqref{eq:graph_boundary}. In order to make this construction cleaner, we need the following definitions.

\begin{definition}
Let $I = \{i_1,i_2,\dots,i_m\}$ be a finite set and $W$ an arbitrary set. A \textit{$I$-parametrized direct product of $W$} is
$$
W^{I} = W_{i_1} \times \dots \times W_{i_m},
$$
where $W_{i_j} = W$ for all $j=1,\dots,m$.
\end{definition}
Since we use $I$ as an index set, for a given subset $J\subset I$ we can define the projection map
$$
\pi_J: W^I \to W^J
$$ 
by forgetting the copies of $W$ indexed by the elements of $I\setminus J$.

\begin{definition}
Let $I,J$ be two finite index sets, $W$ an arbitrary set and $f: I \to J$ a surjective map. Then the \textit{pull-back product} $f^*(W^J)$ is a subset of $W^I$ characterized by the property that for every $j\in J$
$$
\pi_{f^{-1}(j)}(f^*(W^J)) = \left\{(q,q,\dots,q)\in W^{f^{-1}(j)}:q\in W_j \right\}.
$$
Similarly if $X\subset W^J$ the pull-back $f^*(X)$ is a subset of $W^I$ defined by the property that for every $j\in J$.
$$
\pi_{f^{-1}(j)}(f^*(X)) = \left\{(q,q,\dots,q)\in W^{f^{-1}(j)} \, : \, q\in \pi_j(X) \right\}. 
$$
\end{definition}

Let us look at an example when $I=\{1,2\}$ and $J=\{1\}$ and $f(1)=f(2) = 1$. In this case the preimage of $1$ is all of $I$ and hence
$$
f^*(W^J) = \pi_{f^{-1}(1)}(f^*(W^J)) = \{(q,q) \in W^I : q \in W\}, 
$$
which is just the diagonal. Similarly, if $X\subset W$ then $f^*(X)$ is the intersection of the diagonal with $X\times X \subset W \times W$.

Now we can describe the reduction procedure. The idea is intuitive, but a bit involved when written down formally. We want to use the orientations on edges of $\cG$ and pull-back the set of boundary constraints $N$, a priori defined just on the vertex set $\cG_0$, to a new manifold embedded in $ (M \times M)^{\cG_1}$. This allows us to separate the dynamic on each edge from the boundary conditions, exactly as we did a few lines above. 

Saying that each edge $e\in \cG_1$ is oriented is equivalent to having source and target maps $s,t:\cG_1 \to \cG_0$. The image of the source and the target of an edge $e$ are
$$
(q_e(0),q_e(1))  \in M_e \times M_e \simeq M \times M.
$$
Taking a product indexed by $\cG_1$ we obtain that
$$
(q(0),q(1)) \in  (M \times M)^{\cG_1} \simeq M^{\cG_1 \sqcup \cG_1},
$$
where $\cG_1 \sqcup \cG_1$ is a disjoint union of two copies of $\cG_1$. The source and the target maps induce a surjective map $s\sqcup t: \cG_1 \sqcup \cG_1 \to \cG_0$. This allows us to pull-back the boundary conditions to
$$
\tilde N = (s\sqcup t)^*(N).
$$
Hence we have reduced the optimal control problem~\eqref{eq:graph_functional}-\eqref{eq:graph_boundary} to an optimal control problem~\eqref{eq:control}-\eqref{eq:functional} with the configuration space $M^{\cG_1}$ and the boundary conditions $(s\sqcup t)^*(N)$, which encode all the information about the graph structure.

The final remark concerns the symplectic form on $(T^*M \times T^*M)^{\cG_1}$ that will be used for the calculation of the Maslov index. We will assume that each copy of $T^*M$ corresponding to a source vertex carries \emph{minus} the standard symplectic form $-\sigma$, while every target copy $T^*M$ carries $\sigma$, the standard one.
\subsection{Discretization}

\label{subsec:discretization}

In this subsection we prove \Cref{thm:discrete}. Fix a partition $\Xi = \{t_i:\,t_0=0,t_n =1, t_i<t_{i+1}\}$ of the unit interval. 

We will work with an optimal control problem and Assumptions \ref{ass:regularity}$-$\ref{ass:legendre}. In particular, Assumption \ref{ass:legendre}, the strong Legendre condition along the extremal, ensures that the Morse index is finite and that the conjugate points form a discrete set.  This will guarantee that, under mild conditions and after enough successive refinements of the partition, formula \eqref{eq: discretization formula} will give exactly the Morse index of the extremal.

Let us first prove the formula when only one extra vertex is introduced. Let $\gamma = \pi(\lambda)$ be an extremal curve in a problem with fixed end-points. Take a point $t^* \in (0,1)$.  Let us call $\gamma_1 = \gamma|_{[0,t^*]}$ and $\gamma_2 = \gamma |_{[t^*,1]}$ the restrictions. $Q_{\gamma_i}$ will denote the second variation of the segment as an extremal curve with fixed points. Recall that $\Pi_i = T_{\lambda(t_i)}(T^*_{\pi(\lambda(t_i))}M) \simeq T^*_{\pi(\lambda(t_i))}M$ is the vertical subspace over the point $\gamma(t_i)$.

\begin{prop}
	\label{prop: splitting curve}
	The index of the second variation $Q_\gamma$ can be computed as follows:
	\begin{equation}
		\label{eq: index splitting in two}
		\ind^-Q_\gamma = \ind^- Q_{\gamma_1}+\ind^- Q_{\gamma_2} + i\big (\Theta_2^{-1}(\Pi_2),\Pi_1,\Theta_1(\Pi_0)\big ) +k, 
	\end{equation}
	where $k = \dim(\Theta_2(\Pi_1)\cap \Pi_2)+\dim(\Theta_1(\Pi_0)\cap \Pi_1)-\dim(\Theta^{-1}_2(\Pi_2)\cap \Pi_1\cap \Theta_1(\Pi_0))$.
	\begin{proof}
		Let us consider the following three points in $M$:
		\begin{equation*}
			q_0 = \gamma(0), \quad q_1 = \gamma(t^*), \quad q_2 = \gamma(1).
		\end{equation*}
		
		Variations of $\gamma$ as a curve form $q_0$ to $q_2$ do not necessarily pass by the point $q_1$ at time $t^*$ but satisfy a continuity condition there. We perform the reduction to a single interval as discussed in Subsection~\ref{subsec:reduction_to_1d}. To do this we break up $[0,1]$ in two intervals and consider the dynamics separately (i.e. duplicate the variables). The new boundary conditions which allow us to glue the two pieces together are of the form:
		\begin{equation*}
			(\gamma_1(0),\gamma_2(t^*),\gamma_1(t^*),\gamma_2(1)) \in \{q_0\} \times \Delta \times \{q_2\} = \{(q_0,q_1,q_1,q_2) |\,q_1 \in M\}.
		\end{equation*}
		
		Now we are going to compare the following two problems. The first one is \emph{fixed end-points}, we impose that the curve starts from $(q_0,q_1)$ and arrives to $(q_1,q_2)$. The second one is curves satisfying the constraints given by the manifold $N = \{q_0\} \times \Delta \times \{q_2\}$ defined above.
		
	Recall that $\gamma$ is a projection of a solution $\lambda:[0,1]\to T^*M$ of the Hamiltonian system. Let us consider the tangent space to the annihilator of $N$ at the point $\underline{\lambda}={(\lambda(0),\lambda(1),\lambda(1),\lambda(2))}$. Fix a system of coordinates, which determines a complement to the subspace $\ker \pi_*= \Pi_1$ which we call $B$. In these coordinates the annihilator reads:
		\begin{equation*}
			T_{\underline{\lambda}} A(N) = \left\{\begin{pmatrix}
				\nu_1 \\
				\alpha + X \\
				\alpha + X \\
				\nu_2 
			\end{pmatrix}\,:\, \alpha, \nu_i \in \Pi_1, \, X \in B  \right\}.
		\end{equation*}
		
		The other space appearing is the graph of the two symplectomorphisms $\Theta_1$ and $\Theta_2$ coming from the Hamiltonian flows of PMP on intervals $[0,t^*]$ and $[t^*,1]$. It will be denoted by $\Gamma(\Theta_1 \times \Theta_2)$.
		
		Let us look at the subspace on which the Maslov form $m$ is defined, $	(T_{\underline{\lambda}} A(N) + \Pi^4) \cap \Gamma(\Theta_1 \times \Theta_2)$, where $\Pi^4 = \Pi_0 \times \Pi_1^2 \times \Pi_2$. This is defined by the following equations,
		\begin{equation*}
			\begin{pmatrix}
				\xi_1 \\ \xi_2 \\ \Theta_1(\xi_1) \\ \Theta_2(\xi_2)
			\end{pmatrix} = \begin{pmatrix}
				\nu_1 \\ \alpha + X \\\alpha +X \\ \nu_2
			\end{pmatrix}+\begin{pmatrix}
				\mu_1 \\ \mu_2 \\ \mu_3 \\ \mu_4
			\end{pmatrix} \iff \xi_1\in \Pi_0 , \Theta_2(\xi_2 ) \in \Pi_2,\, \xi_2-\Theta_1(\xi_1) \in \Pi_1,
		\end{equation*}
	where $\alpha$, $\nu_i$, for $i=1,2$ and $\mu_j$ for $j=1,\dots 4$ lie in the vertical subspace over the respective points, whereas $X \in B$ is in the horizontal space.
		In particular Maslov form reads:
		\begin{equation*}
			m(\xi_1,\xi_2) = \sigma(\mu_3-\mu_2,X) = \sigma (\Theta_1(\xi_1)-\xi_2,\xi_2) = \sigma (\Theta_1(\xi_1),\xi_2) = \sigma(\xi_2,-\Theta_1(\xi_1)). 
		\end{equation*}
		So, if we call $\eta = \Theta_2(\xi_2) \in \Pi_2$ and $\xi = \xi_1$ we have $\xi \in \Pi_0,\eta \in \Pi_2$ and $\Theta_2^{-1}(\eta)-\Theta_1(\xi) \in \Pi_1$ and see that the form coincides with:
		\begin{equation*}
			m (\Pi^4,\Gamma(\Theta_1 \times \Theta_2),T_{\underline{\lambda} }A(N)) = m (\Theta_2^{-1}(\Pi_2),\Pi_1,\Theta_1(\Pi_0)). 
		\end{equation*}
		
		The additional terms popping up in \Cref{thm: comparison theorem} are 
		$$
		\dim (\Gamma(\Theta_1 \times \Theta_2)\cap \Pi^4) = \dim(\Theta_1(\Pi_0) \cap \Pi_1)+\dim(\Theta_2^{-1}(\Pi_2) \cap \Pi_1)$$ 
		and 
		$$\dim(\Gamma(\Theta_1 \times \Theta_2)\cap \Pi^4\cap T_{\underline{\lambda} }A(N)) = \dim(\Theta_2^{-1}(\Pi_2)\cap \Theta_1(\Pi_0) \cap \Pi_1)$$
		 as a quick calculation shows.
	\end{proof}
\end{prop}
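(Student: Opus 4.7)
The plan is to apply the main comparison theorem \cref{thm: comparison theorem} to two boundary conditions on an appropriate doubled configuration space. First I would use the reduction recipe from \Cref{subsec:reduction_to_1d} to split the interval at $t^*$: duplicate the state variables at this moment and treat $\gamma$ as an extremal on $[0,1]$ with configuration space $M \times M$, the two half-intervals living on separate copies. The relevant base point in the $4$-fold cotangent product is $\underline{\lambda} = (\lambda(0), \lambda(t^*), \lambda(t^*), \lambda(1))$, and we equip the factors with the source/target sign convention for the symplectic form explained at the end of \Cref{subsec:reduction_to_1d}.

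On this doubled system I would introduce two boundary submanifolds at the middle vertex: the fully fixed $N = \{q_0\} \times \{q_1\} \times \{q_1\} \times \{q_2\}$, which decouples the two halves and yields $Q_N = Q_{\gamma_1} + Q_{\gamma_2}$, and the gluing condition $\tilde N = \{q_0\} \times \Delta \times \{q_2\}$, whose transversality constraint forces the two middle covectors to agree and thus reproduces $Q_{\tilde N} = Q_\gamma$. Since $N \subset \tilde N$, the correction $\dim(T N \cap T \tilde N) - \dim T N$ in \cref{thm: comparison theorem} vanishes automatically. The graph $\Gamma(\Psi)$ becomes $\Gamma(\Theta_1 \times \Theta_2)$, the tangent to $A(N)$ is $\Pi^4 := \Pi_0 \times \Pi_1 \times \Pi_1 \times \Pi_2$, and the tangent to $A(\tilde N)$ consists of quadruples $(\nu_0, \alpha + X, \alpha + X, \nu_2)$ with $\nu_i \in \Pi_i$, $\alpha \in \Pi_1$ and $X$ in any chosen horizontal complement $B$ of $\Pi_1$.

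Next I would compute the Maslov form of the triple $(\Pi^4, \Gamma(\Theta_1 \times \Theta_2), T_{\underline{\lambda}} A(\tilde N))$ in coordinates. A short calculation shows that its domain consists of $(\xi_1, \xi_2, \Theta_1 \xi_1, \Theta_2 \xi_2)$ with $\xi_1 \in \Pi_0$, $\Theta_2 \xi_2 \in \Pi_2$ and $\xi_2 - \Theta_1 \xi_1 \in \Pi_1$, and that after using the Lagrangian property of the vertical fibres the value of the form collapses to $\sigma(\Theta_1 \xi_1, \xi_2)$ on the middle copy. The change of variables $\xi := \xi_1 \in \Pi_0$, $\eta := \Theta_2 \xi_2 \in \Pi_2$ identifies this with the Maslov form of the triple $(\Theta_2^{-1}(\Pi_2), \Pi_1, \Theta_1(\Pi_0))$, whose negative index supplies the $i(\cdot,\cdot,\cdot)$ summand of the target formula.

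Finally I would check that the correction $k_0$ from \cref{thm: comparison theorem} matches the $k$ in the statement by a direct dimension count of $\Pi^4 \cap \Gamma(\Theta_1 \times \Theta_2)$ and of its further intersection with $T_{\underline{\lambda}} A(\tilde N)$; invoking the identity $\dim(\Pi_1 \cap \Theta_2^{-1}(\Pi_2)) = \dim(\Theta_2(\Pi_1) \cap \Pi_2)$ (which holds since $\Theta_2$ is an isomorphism) reproduces exactly the expression for $k$. The main delicate point is not any one of these steps in isolation but the combined bookkeeping of sign conventions, of which copy of $T^*M$ each vector lives in, and of the cancellation of the mixed pairings involving two vertical or two horizontal contributions, which is where the Lagrangian property of the $\Pi_i$ is genuinely used.
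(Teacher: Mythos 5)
Your proposal matches the paper's proof essentially step for step: the same doubling of the interval at $t^*$, the same pair of boundary conditions (fully fixed versus diagonal gluing) fed into \Cref{thm: comparison theorem}, the same coordinate computation of the Maslov form reducing it to $m(\Theta_2^{-1}(\Pi_2),\Pi_1,\Theta_1(\Pi_0))$, and the same dimension count for $k_0$. Your version is in fact slightly cleaner in notation than the paper's (you write $\underline{\lambda}=(\lambda(0),\lambda(t^*),\lambda(t^*),\lambda(1))$ where the paper has a typo, and you make explicit the cancellation coming from $N\subset\tilde N$), but it is the same argument.
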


We can now prove \Cref{thm:discrete}.

	\begin{proof}[Prood of \Cref{thm:discrete}]
		The statement will follow from \Cref{prop: splitting curve}. First of all notice that in \cref{eq: index splitting in two} all terms are positive, this gives easily that $\ind^-Q \ge i\big(\Theta_2^{-1}(\Pi),\Pi, \Theta_1(\Pi)\big) $ when the partition is $\Xi = \{0,t^*,1\}$.
		For a general $\Xi$ apply \Cref{prop: splitting curve} iteratively to $\{0,t_{j-1},t_j\}$ where $j$ runs from $2$ to $n$. This allows to express the index of the second variation of $\gamma\vert_{[0,t_j]}$ as the sum of the index of the second variation of $\gamma\vert_{[0,t_{j-1}]}$ and $\gamma_j := \gamma\vert_{[t_{j-1},t_j]}$ plus Maslov index terms and dimensions of intersections. 
		
		Iteratively replacing the terms $\ind^- \, Q_{\gamma\vert_{[0,t_j]}}$ we obtain the following formula:
		\begin{equation*}
			\begin{split}
				\ind^-Q_{\gamma}= \sum_{j=0}^{n-1}\ind^- Q_{\gamma_j} + i \big(\Theta_{j+1,j}^{-1}(\Pi_{j+1}),\Pi_j,\Theta_{j,j-1} \circ \dots \circ  \Theta_{1,0}(\Pi_0)		\big)  \\
				+\dim(\Theta_{j,0}(\Pi_0) \cap \Pi_j) +\dim(\Theta_{j+1,j}(\Pi_j)\cap \Pi_{j+1})-\dim(\Theta_{j,0}(\Pi_0) \cap \Pi_j \cap\Theta^{-1}_{j+1,j}(\Pi_{j+1})).
			\end{split}
		\end{equation*}
	
		Here as described in the statement the maps $\Theta_{j,j-1}$ are the linearisation of the Hamiltonian flow of \cref{eq:hamilton}. The notation is related to law of composition of non autonomous flows and is justified by the fact that $\Theta_{j,k}\circ \Theta_{k,l} = \Theta_{j,l}$. 
		
		The index is presented as sum of three positive terms: the first one $Q_{\gamma_i}$ is zero when each segment $\gamma\vert_{[t_i,t_{i+1}]}$ is minimizing~\cite[Theorem 21.3]{bookcontrol}. Under Assumption~\ref{ass:legendre} this is the case when $\sup_i\vert t_i- t_{i-1}\vert$ is small enough (see \cite{bookcontrol} for instance). 
		The same goes for $\dim(\Theta_{i+1,i}(\Pi_i)\cap \Pi_{i+1}) -\dim(\Theta_{i,0}(\Pi_0) \cap \Pi_i \cap\Theta^{-1}_{i+1,i}(\Pi_{i+1}))$. 
		Moreover $\dim(\Theta_{i,0}(\Pi_0) \cap \Pi_i)$ is zero precisely  when $t_i$ is not a conjugate time for $\gamma$. 
		
		Thus equality holds exactly when our hypotheses on the partition are satisfied. 		
		\end{proof}

\begin{rmrk}
	The hypothesis on the partition $\Xi$ can be weakened if we change a bit our way of counting.  If we add to the dimension of the negative space the dimension of the null space of the Maslov form we can essentially forget about avoiding conjugate points of $\gamma$.
	
	You can see that the correction term $k$ in \Cref{prop: splitting curve} is in fact the dimension of the kernel of the Maslov form $m \big (\Theta_2^{-1}(\Pi_2),\Pi_1, \Theta_1(\Pi_1)\big )$.	
	The quantity $ \sum_{i=1}^{n-1} (\ind^-+\ker)\Big(m \big(\Theta_{i+1,i}^{-1}(\Pi_{i+1}),\Pi_i,\Theta_{i,i-1} \circ \dots \circ  \Theta_{1,0}(\Pi_0)
	\big) \Big) $ still approximates from below the negative index and includes the contribution of conjugate points of $\gamma$ that are possibly present in the partition.
\end{rmrk}

\begin{rmrk}
	If we combine \Cref{thm: comparison theorem} and \Cref{thm:discrete} we can obtain a formula for the index involving just the Maslov index $i$ and dimension of intersections for arbitrary boundary conditions. 
\end{rmrk}

\subsection{Filtration formula}
\label{subsec:filtration}

In the previous subsection we have proven a discretization formula for the fixed end-point problem on an interval. The idea was to introduce extra vertices inside the single edge and apply an iterative procedure of fixing each of the new vertices one by one. Note that if we would have fixed all of the vertices at the same time, a direct application of~\eqref{eq:main_index_formula} would result in computation of the Maslov index in a symplectic space of a very big dimension. Instead the recursive nature of the proof allows us to reduce greatly the dimensionality of the problem.

A way of reducing the dimensionality in formula~\eqref{eq:main_index_formula} for problems with separated boundary conditions is discussed in paper~\cite{baryshnikov}. The argument works when all of the Lagrangian spaces in the final formula are transversal. We can reproduce this argument in a greater generality using Theorem~\ref{thm: comparison theorem}.

Assume that each vertex $v\in\cG_0$ is constrained to lie on a separate submanifold $N_v \subset M$. We denote by $N$ boundary conditions, which are obtained after the reduction of the problem to an interval. We can introduce a filtration of vertices
$$
\emptyset = \cG_0^0 \subset \cG_0^1 \subset \dots \subset \cG_0^{| \cG_0|} = \cG_0,
$$ 
such that
$$
|\cG_0^j| = |\cG_0^{j-1}| + 1, \qquad i = 1,\dots,|\cG_0|.
$$
To each set $\cG_0^j$ we associate boundary conditions $N_j \subset N$ in the following way. We assume that vertices $v\in \cG_0^j$ vary on {$N_v$}, while vertices $v\in \cG_0 \setminus \cG_0^j$ are assumed to be fixed. Thus we activate variations of each individual vertex at a time and track how the index changes as we do so. 

We now apply Theorem~\ref{thm: comparison theorem} to compute 
$
\ind^- Q_{N_{j+1}} - \ind^- Q_{N_j}.
$
Let us introduce some simplifying notations. Recall that $s,t: \cG_1 \to \cG_0$ are the source and the target maps. Let $v_j \in \cG^{j+1}_0 \setminus \cG^j_0$ be the activated vertex. We introduce a separate notation for the set of edges that are incident to $v_j$:
$$
\cG^j_1 = s^{-1}(v_j) \, \cup\, t^{-1}(v_j).
$$
A naive guess would be that when we activate a vertex, the only relevant contributions come from the edges incident to a given vertex. Thus we define forgetful projections $\pi_{\cG^j_1}$ which forget all the edges except the ones incident to $v_j$:
$$
\pi_{\cG^j_1}: T_{\bar \lambda}(T^*M)^{\cG_1 }\times T_{\bar\lambda}(T^*M)^{\cG_1 } \to T_{\bar\lambda}(T^*M)^{\cG_1^j }\times T_{\bar\lambda}(T^*M)^{\cG_1^j }.
$$

Subspaces $T_{\bar\lambda}A(N_{j-1})$ and $T_{\bar\lambda}A(N_j)$ can have a big intersection. For sure this intersection contains the subset $V_j=\pi_{\cG^j_1}^{-1}(0) $, which is an isotropic subspace. This means that we can perform a symplectic reduction to the space $V_j^\perp /V_j$. Let 
$$
\pi_j: T_{\bar\lambda}(T^*M)^{|\cG_1|} \times T_{\bar\lambda}(T^*M)^{|\cG_1|} \to V_j^\perp /V_j  
$$
be the projection maps for each $j=1,\dots,|\cG_1|$. We can then define shortened notations for the images:
\begin{align*}
A_{j-1}^j  &= \pi_j(T_{\bar\lambda}A(N_{j-1})),\\ 
A_j^j  &= \pi_j(T_{\bar\lambda}A(N_j)),\\
\Gamma(\Theta_j) &= \pi_j(\Gamma(\Theta)).   
\end{align*}
By property \eqref{eq: symplectic reduction maslov index}, we can factor out $V_j$ in the definition of the Maslov index and get
$$
i(T_{\bar\lambda}A(N_{j-1}),\Gamma(\Theta),T_{\bar\lambda}A(N_j)) = i(A_{j-1}^j,\Gamma(\Theta_j),A_j^j)
$$
and for the same reason
$$
\dim \left(T_{\bar\lambda}A(N_{j-1})\cap\Gamma(\Theta)\right) - \dim \left(T_{\bar\lambda}A(N_{j})\cap\Gamma(\Theta)\cap T_{\bar\lambda}A(N_{j-1})\right) = \dim \left(A_{j-1}^j \cap\Gamma(\Theta_j)\right)  - \dim \left(A_{j-1}^j \cap\Gamma(\Theta_j) \cap A_j^j\right).
$$
Finally since $N_{j-1} \subset N_{j}$, we have that
$$
\dim (T_{\pi(\bar\lambda)}N_{j-1} \cap T_{\pi(\bar\lambda)}N_{j}) -\dim (T_{\pi(\bar\lambda)}N_{j-1})  = 0.
$$

Now we collect all of the terms and sum by the index $j=1,\dots,|\cG_0|$. As the result we obtain a formula that expresses the difference between the index of the second variation $Q$ of the original problem with the index of the second variation $Q_0:=Q_{N_0}$ of the problem with the same graph and fixed vertices:
\begin{align*}
\ind^- Q - \ind Q_0 &= \sum_{j=1}^{|\cG_0|}\ind^- Q_{N_j}- \ind^- Q_{N_{j-1}} = \\
&=\sum_{j=1}^{|\cG_0|} i(A_{j-1}^j,\Gamma(\Theta_j),A_j^j) + \dim \left(A_{j-1}^j \cap\Gamma(\Theta_j)\right)  - \dim \left(A_{j-1}^j \cap\Gamma(\Theta_j) \cap A_j^j\right).
\end{align*}
This is the same formula as in~\cite{baryshnikov} modulo terms containing dimensions of intersections.

We end this subsection with a couple of remarks regarding this formula. First of all, in practice the dimensions are reduced even more because $A^j_{j-1} \cap A_j^j \neq \emptyset$. Nevertheless further reductions depend on the structure of the graph and the filtration chosen. Secondly, at first sight it may seem that the formula is a sum of local contributions, because we have used only edges incident to a given vertex in the derivation. However, this is not the case. The non-locality is hidden in the reduced space $V^\perp_j / V_j$ and the corresponding projection $\pi_j$. 

For example, in the case when $\cG_1$ is a tree, we can define a partial order $\leq$ on $\cG_0$ by saying that $v \leq w$ if the minimal path from $v$ to the root crosses less or equal number of vertices than the minimal path from $w$ to the root. If we choose a filtration, which orders vertices one by one compatible with the partial ordering, one can identify the set $\cG_0^j$ with a sub-tree of $\cG$. Then the formula for the indices $i(A_{j-1}^j,\Gamma(\Theta_j),A_j^j)$ will contain terms involving symplectomorphisms of all of the edges in the sub-tree determined by $\cG_0^j$ and not only of the incident edges. This can be checked via a long but straightforward calculation.

\subsection{Iteration Formulae}
\label{subsec:iteration}
Now we prove \Cref{thm:iteration}. For clarity we prove separately the two formulas since the strategies in the two cases are a bit different.

Suppose that $\gamma$ is a closed periodic extremal trajectory. It is straightforward to see that iterations (i.e. concatenation of $\gamma$ with itself) are still critical points.

We will use the following notation: $\gamma^k$ will denote the $k-$th iteration of $\gamma$ whereas $\ind^-Q_\gamma$ and  $\ind^-Q_{\gamma^k}$ the Morse index of $\gamma$ and $\gamma^k$ respectively as periodic trajectories. We want to compute the difference $\ind^-Q_{\gamma^k}-k\, \ind^-Q_{\gamma}$.

First of all we compute the difference $\ind^- Q_{\gamma^{k}}-\ind^- Q_{\gamma^{k-1}}$. Let us consider the following manifolds of constraints:
\begin{equation*}
	\begin{split}
		\Delta^\circlearrowright := \{ (q_1,q_2,q_2,q_1) : q_i\in M, \} \subset M^2 \times M^2,\\
		\Delta^2 = \{(q_1,q_2,q_1,q_2) : q_i \in M\} \subset M^2 \times M^2.
	\end{split}
\end{equation*}

When we restrict to variations satisfying the boundary conditions given by $\Delta^\circlearrowright$, we consider variations of $\gamma^k$ as a periodic trajectory, whereas when we take boundary conditions $\Delta^2$, we consider independent variations of $\gamma^{k-1}$ and $\gamma$ as periodic trajectories. See~\Cref{fig:iteration} for a visual explanation between the two boundary conditions. Now we prove the following lemma:

\begin{lemma}
	\label{lemma: index k-1 interate}
	Here $n = \dim(M)$, let $\Gamma^j = \Gamma(\Theta^j)$ the graph of $\Theta^j$. Then:
	\begin{equation*}
		\ind^- Q_{\gamma^k}-  \ind^-Q_{\gamma^{k-1}}= \ind^-Q_{ \gamma} + i \big (\Gamma^{k-1},T_{\underline{\lambda}}A(\Delta),\Gamma^k)-n  +\dim\big(\ker(\Theta^{k-1}-1) \big).			
	\end{equation*}
	\begin{proof}
		The statements follows applying \Cref{thm: comparison theorem}. We take as $N_1 = \Delta^\circlearrowright$ and as $N_2 = \Delta^2$. The part coming from the dimension is immediate, the intersection of the tangent spaces has dimension $n$ while the dimension of $\Delta^2$ is $2n$. So we get a $-n$.
		
		For the part concerning the intersection between annihilators and graphs, one can check that $T_{(\lambda(0),\lambda(0))}A(\Delta^2)\cap \Gamma(\Theta^{k-1} \times \Theta)$ is isomorphic to the sum of $\ker(\Theta^{k-1}-1)$ and $\ker(\Theta-1)$. The triple intersection consists of  $\ker(\Theta-1)$ and thus the term in the statement. 
		
		From the definitions it follows that, when we impose the boundary conditions $\Delta^2$, we have $\ind^- Q_{N_2}= \ind^-Q_{ \gamma^{k-1}}+\ind^- Q_\gamma$, so the only thing to check is the Maslov index part.
		
		The equation defining the subspace are the following:
		\begin{equation*}
			\begin{pmatrix}
				\xi_1 \\ \xi_2 \\ \Theta^{k-1}(\xi_1) \\ \Theta(\xi_2)
			\end{pmatrix} = \begin{pmatrix}
				X_1+Y_1 \\ X_2+Y_2 \\ X_2+Y_1 \\X_1+Y_2
			\end{pmatrix} \quad X_i,Y_i,\xi_i \in T_{\lambda(0)}(T^*M).
		\end{equation*}
		
		By subtracting the second and the third equations, and then the first and the fourth equations we find
		$$\xi_2-\Theta^{k-1}(\xi_1) = Y_2-Y_1,$$ 
		$$ \Theta^{k-1}(\xi_1)-\Theta^{k}(\xi_2) = \Theta^{k-1}(Y_1-Y_2).$$ 
		Changing coordinates and setting $\eta = \Theta^{k-1}(\xi_1)$, $Y_1-Y_2 = \eta_1$ and $\eta_2 = \xi_2$ we get:
		\begin{equation*}
			(\eta,\eta) \in T_{\underline{\lambda}}A(\Delta)  \cap (\Gamma^{k-1}+\Gamma^k)\iff  \begin{pmatrix}
				\eta \\\eta
			\end{pmatrix} = \begin{pmatrix}
				\eta_1+\eta_2\\ \Theta^{k-1}(\eta_1)+\Theta^k(\eta_2) 
			\end{pmatrix}
		\end{equation*}
		So we can see that the Maslov form reduces to a form on $\Delta \cap (\Gamma^{k-1}+\Gamma^k)$.		
		Moreover the quadratic form reads:
		\begin{equation*}
			\begin{split}
				m(\xi_1,\xi_2)=\sigma(X_2,Y_1-Y_2)-\sigma (X_1,Y_1-Y_2) = \sigma(\xi_2-\Theta(\xi_2),Y_1-Y_2)\\
				= \sigma(\eta_2-\Theta(\eta_2),\eta_1) \\
				=\sigma(\eta_2,\eta_1)-\sigma(\Theta(\eta_2),\eta_1) \\
				= -\sigma(\eta_1,\eta_2)+\sigma(\Theta^{k-1}(\eta_1),\Theta^k(\eta_2)). 
			\end{split}
		\end{equation*}
		Which is exactly $m(\Gamma^{k-1},T_{\underline{\lambda}}A(\Delta),\Gamma^k)$ in the coordinates just introduced. And thus the formula is proved.
	\end{proof}
\end{lemma}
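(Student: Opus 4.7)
The plan is to apply the main comparison theorem (\Cref{thm: comparison theorem}) to the pair of boundary conditions $N = \Delta^2$ and $\tilde N = \Delta^\circlearrowright$ inside $(M \times M) \times (M \times M) \simeq M^4$. The underlying extremal $\underline{\lambda}$ is fully diagonal, being built from the periodic curve $\gamma$, and the relevant Hamiltonian flow on $T_{\lambda(0)}(T^*M)^{\oplus 2}$ is the product $\Psi = \Theta^{k-1} \times \Theta$, since the first edge of the auxiliary two-edge graph carries $\gamma^{k-1}$ and the second carries $\gamma$. Each of the four summands on the right-hand side of the claimed formula will be matched to one of the four terms on the right-hand side of \eqref{eq:main_index_formula}.

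First I would read off the two Morse indices. The boundary condition $\Delta^2$ imposes independent periodicity on the two edges, so the variational problem decouples and $\ind^- Q_{N} = \ind^- Q_{\gamma^{k-1}} + \ind^- Q_\gamma$; rearrangement then produces the two negative terms $-\ind^- Q_{\gamma^{k-1}}$ and $-\ind^- Q_\gamma$ on the left. The condition $\Delta^\circlearrowright$ glues the two edges into a single closed trajectory of length $k$, giving $\ind^- Q_{\tilde N} = \ind^- Q_{\gamma^k}$. Next I would extract the dimension correction: both $T_{\underline{\lambda}}\Delta^2$ and $T_{\underline{\lambda}}\Delta^\circlearrowright$ have dimension $2n$ and their common intersection is the full diagonal of dimension $n$, so the contribution is $\dim(T\Delta^2 \cap T\Delta^\circlearrowright) - \dim T\Delta^2 = n - 2n = -n$, giving the isolated $-n$ term.

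Then I would compute the correction $k_0$ that appears in \Cref{thm: comparison theorem}. A vector in $\Gamma(\Psi) \cap T_{\underline{\lambda}}A(\Delta^2)$ has the shape $(X_1, X_2, X_1, X_2)$ with $\Theta^{k-1}(X_1) = X_1$ and $\Theta(X_2) = X_2$, so this intersection is canonically $\ker(\Theta^{k-1} - 1) \oplus \ker(\Theta - 1)$. Intersecting further with $T_{\underline{\lambda}}A(\Delta^\circlearrowright)$ forces $X_1 = X_2$, and since $\Theta X = X$ automatically implies $\Theta^{k-1} X = X$, the triple intersection collapses to $\ker(\Theta - 1)$. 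Subtracting yields $k_0 = \dim \ker(\Theta^{k-1} - 1)$, matching the fourth term in the statement.

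The genuine obstacle is to show that the Maslov index produced by \Cref{thm: comparison theorem}, namely $i\bigl(T_{\underline{\lambda}}A(\Delta^2),\, \Gamma(\Psi),\, T_{\underline{\lambda}}A(\Delta^\circlearrowright)\bigr)$ computed in the $8n$-dimensional ambient symplectic space, collapses to $i(\Gamma^{k-1},\, T_{\underline{\lambda}}A(\Delta),\, \Gamma^k)$ on the $4n$-dimensional space $T_{\lambda(0)}(T^*M)^{\oplus 2}$ equipped with $(-\sigma) \oplus \sigma$. To achieve this I would parameterize $T_{\underline{\lambda}}A(\Delta^\circlearrowright)$ by horizontal components $X_1, X_2$ and vertical components $Y_1, Y_2$ in the form $(X_1{+}Y_1,\, X_2{+}Y_2,\, X_2{+}Y_1,\, X_1{+}Y_2)$ dictated by the pattern of $\Delta^\circlearrowright$, impose membership in $\Gamma(\Psi)$ to solve for $\xi_1, \xi_2$, and then change variables $\eta_1 = Y_1 - Y_2$, $\eta_2 = \xi_2$ and $\eta = \Theta^{k-1}(\xi_1)$ as in the sketch. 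A direct evaluation of the symplectic pairing then brings the Maslov form into $\sigma(\Theta^{k-1}\eta_1, \Theta^k\eta_2) - \sigma(\eta_1, \eta_2)$, which is by definition the Maslov form of the triple $(\Gamma^{k-1}, T_{\underline{\lambda}}A(\Delta), \Gamma^k)$ on the reduced space. The subtle points are keeping track of the mixed sign conventions on both symplectic structures and checking that the reduction to the $4n$-dimensional picture is faithful at the level of negative indices; any kernel contribution has already been absorbed into the $k_0$ computation above, so no extra correction terms slip through.
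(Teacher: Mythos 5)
Your proposal is correct and follows essentially the same route as the paper: you apply \Cref{thm: comparison theorem} to the pair $\Delta^2$, $\Delta^\circlearrowright$, read off the $-n$ from the dimension term, identify $k_0$ with $\dim\ker(\Theta^{k-1}-1)$ via the same intersection computation, and reduce the Maslov index to $i(\Gamma^{k-1},T_{\underline{\lambda}}A(\Delta),\Gamma^k)$ using the same parameterization $(X_1{+}Y_1,X_2{+}Y_2,X_2{+}Y_1,X_1{+}Y_2)$ and change of variables $\eta_1=Y_1-Y_2$, $\eta_2=\xi_2$. No gaps.
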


 The first iteration formula is now a direct consequence of the Lemma just proved:

\begin{thm*}[Iteration Formulae I]
	The index of the $k-$th iteration of $\gamma$ as a periodic trajectory satisfies:
	\begin{equation}
		\label{eq: iteration formula II}
		\ind^-Q_{\gamma^k} - k\, \ind^-Q_\gamma= \sum_{j=1}^k i\big(\Gamma(\Theta^{j-1}),T_{\underline{\lambda}}A(\Delta),\Gamma(\Theta^{j})\big)-\dim(M) + \dim(\ker(\Theta^{j-1}-1)).
	\end{equation} 
	\begin{proof}
		We will use an inductive procedure in a similar spirit as in the proof of \Cref{thm:discrete}. First we will look at $\gamma^k$ as the concatenation of $\gamma^{k-1}$ and $\gamma$ and express the difference $\ind^-Q_{\gamma^k}-\ind^-Q_\gamma$ in terms of $\ind^-Q_{\gamma^{k-1}}$.

		This is the first step of the scheme and was proved in \Cref{lemma: index k-1 interate}. Then we apply the argument iteratively to obtain:
		\begin{equation*}
			\ind^- Q_{\gamma^k} -k \, \ind^- Q_\gamma = \sum_{j=1}^k i(\Gamma^{j-1},T_{\underline{\lambda}}A(\Delta),\Gamma^{j})-n + \dim(\ker(\Theta^{j-1}-1)).
		\end{equation*}
		Which is precisely the formula in the statement.
	\end{proof}
\end{thm*}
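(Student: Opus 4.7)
The plan is to prove the formula by induction on $k$, reducing it to a single-step identity that compares the $k$-th iterate with the concatenation of the $(k-1)$-th iterate and one extra copy of $\gamma$. The natural tool for this single step is \Cref{thm: comparison theorem}, and I would first reformulate the periodic problem as a standard optimal control problem on $[0,1]$ via the reduction of \Cref{subsec:reduction_to_1d} applied to the graph consisting of two loops based at a common vertex.

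Inside the resulting boundary space $M^2 \times M^2$ I would compare two submanifolds: $N_1 = \Delta^\circlearrowright := \{(q_1,q_2,q_2,q_1)\}$, which enforces the two loops to concatenate into a single closed trajectory $\gamma^k$, and $N_2 = \Delta^2 := \{(q_1,q_2,q_1,q_2)\}$, which treats the two loops as independent periodic trajectories $\gamma^{k-1}$ and $\gamma$. Both admit the common extremal $\gamma^{k-1} \sqcup \gamma$, so \Cref{thm: comparison theorem} applies. The dimension term $\dim(T_{\underline{\lambda}}N_1 \cap T_{\underline{\lambda}}N_2) - \dim T_{\underline{\lambda}}N_1$ contributes $-n$, since both subspaces are $2n$-dimensional and meet in an $n$-dimensional diagonal. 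A short calculation identifies $T_{\underline{\lambda}}A(\Delta^2) \cap \Gamma(\Theta^{k-1}\times\Theta) \cong \ker(\Theta^{k-1}-1) \oplus \ker(\Theta-1)$, while the triple intersection with $T_{\underline{\lambda}}A(\Delta^\circlearrowright)$ reduces to $\ker(\Theta-1)$, producing the correction $\dim\ker(\Theta^{k-1}-1)$.

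The heart of the argument is the Maslov index calculation on the big symplectic space $(T^*M)^{\times 4}$. Using property~\eqref{eq: symplectic reduction maslov index} and the isotropic direction corresponding to the shared vertex coordinate, I would argue that the index collapses to the three-plane Maslov index $i\bigl(\Gamma(\Theta^{k-1}), T_{\underline{\lambda}}A(\Delta), \Gamma(\Theta^k)\bigr)$ on a single copy of $T_{\lambda(0)}(T^*M) \times T_{\lambda(0)}(T^*M)$. Concretely, I would parametrize the relevant intersection by the equations $\xi_1 = X_1+Y_1$, $\xi_2 = X_2+Y_2$, $\Theta^{k-1}(\xi_1) = X_2+Y_1$, $\Theta(\xi_2) = X_1+Y_2$, compute the Maslov form as $\sigma(X_2-X_1, Y_1-Y_2)$, and then change variables $\eta_1 = Y_1-Y_2$, $\eta_2 = \xi_2$ to recognize it as $\sigma(\Theta^{k-1}\eta_1, \Theta^k\eta_2) - \sigma(\eta_1, \eta_2)$, which is precisely the Maslov form of the desired triple.

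Combining the three ingredients yields the single-step identity $\ind^- Q_{\gamma^k} - \ind^- Q_{\gamma^{k-1}} = \ind^- Q_\gamma + i\bigl(\Gamma(\Theta^{k-1}), T_{\underline{\lambda}}A(\Delta), \Gamma(\Theta^k)\bigr) - n + \dim\ker(\Theta^{k-1}-1)$, and telescoping from $j=k$ down to $j=1$ produces the stated sum, with the $j=1$ contribution handled by $\Gamma(\Theta^0)$ being the diagonal. The main obstacle I expect is the careful bookkeeping of signs in the product symplectic structure $(-\sigma)\oplus\sigma\oplus(-\sigma)\oplus\sigma$ and the explicit identification of the Maslov quadratic form after symplectic reduction; once that identification is in place, the dimensional corrections are forced by routine linear algebra.
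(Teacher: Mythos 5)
Your proposal reproduces the paper's strategy essentially verbatim: compare $\gamma^j$ with the disjoint union $\gamma^{j-1} \sqcup \gamma$ via \Cref{thm: comparison theorem} using the boundary manifolds $\Delta^\circlearrowright$ and $\Delta^2$, identify the dimension term $-\dim M$ and the correction $\dim\ker(\Theta^{j-1}-1)$, recognize the Maslov form as $\sigma(\Theta^{j-1}\eta_1,\Theta^j\eta_2)-\sigma(\eta_1,\eta_2)$ after the change of variables $\eta_1 = Y_1-Y_2$, $\eta_2 = \xi_2$, and telescope. This is exactly the content of \Cref{lemma: index k-1 interate} and its iterative application in the paper.

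There is one point you gloss over which deserves care, though the same issue is implicit in the paper's own write-up. The single-step identity applies for $j = k, k-1, \dots, 2$; there is no $\gamma^0$ and hence no instance of the lemma for $j=1$. Telescoping therefore yields $\sum_{j=2}^{k}$, not $\sum_{j=1}^{k}$. The $j=1$ summand of the stated formula is
\[
i\bigl(\Gamma(\Theta^0),T_{\underline\lambda}A(\Delta),\Gamma(\Theta)\bigr) - \dim M + \dim\ker(\Theta^0-1) = 0 - n + 2n = n,
\]
since $\Gamma(\Theta^0)$ coincides with $T_{\underline\lambda}A(\Delta)$ (so the Maslov index vanishes) while $\ker(\Theta^0-1)$ is all of $T_{\lambda(0)}(T^*M)$. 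So ``$\Gamma(\Theta^0)$ being the diagonal'' does not make the $j=1$ term disappear; it evaluates to $\dim M$. Either the sum should start at $j=2$ (equivalently $\sum_{j=1}^{k-1}$ after re-indexing, which also lines up with the $k-1$ summands appearing in Iteration Formula II), or you need an additional argument supplying the missing $\dim M$. As written, asserting the $j=1$ contribution is ``handled'' without computing it is a gap.
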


Now we prove the second iteration formula.

\begin{thm*}[Iteration Formulae II]
	The index of the $k-$th iteration of $\gamma$ as a periodic trajectory satisfies:
	\begin{equation}
		\label{eq: iteration formula I}
				\ind^-Q_{\gamma^k} - k\, \ind^-Q_\gamma= \sum_{j=1}^{k-1} \dim(M)-\dim(\ker (\Theta-\omega^j))-i (\Gamma(\Theta),\Delta,\Gamma(\omega^j\Theta)).
	\end{equation} 
	Where $\omega$ is a primitive $k-$th root of the unity.  
	\begin{proof}
		We work on the space $M^k = M\times \dots \times M$. The first set of boundary conditions we are going to consider is the following:
		\begin{equation*}
			\Delta^\circlearrowright := \{ (q_1,\dots,q_k,r_1\dots r_k) : r_i,q_i\in M, q_i = r_{i-1} \} \subset M^k \times M^k.
		\end{equation*} 
		
		Set $q_0 = \gamma(0) = \gamma(1)$. Any curve satisfying the boundary conditions $\Delta^\circlearrowright$ at $(q_0, \dots , q_0)$ gives a variation of the $k-$th iterate of $\gamma$ seen as periodic trajectory. 
		
		The other sets of constraints we are going to introduce are the following:
		\begin{equation*}
			\begin{aligned}
			\Delta^{k} &= \{(q_1,\dots,q_k,q_1,\dots, q_k) : q_i \in M\} \subset M^k\times M^k,\\
				\underline{q}_0 &= \{(q_0,\dots,q_0) : q_0=\gamma(0) = \gamma(1)\}.
		\end{aligned}
		\end{equation*}
		
		The first boundary condition is the product of $2k$ copies of the diagonal. Any curve satisfying this set of constraints at point $(q_0, \dots , q_0)$ is a variation of $\gamma^k$ as $k$ independent periodic trajectories $\gamma$. The second boundary condition corresponds to $k$ copies of a single point $q_0$. Variations of $\gamma^k$ satisfying these latter conditions are $k$ independent variations of $\gamma$ as a trajectory with fixed points. 
		
		Set for brevity $\Delta^k = T_{\underline{\lambda}}A(\Delta^k)$, $\Delta^\circlearrowright = T_{\underline{\lambda}}A(\Delta^\circlearrowright)$ and 
	$
	\Gamma = \Gamma(\Theta\times\dots \times \Theta)
	$ 
	to be the product of $k$ copies of $\Gamma(\Theta)$. We have $T_{\underline{\lambda}}A(\underline{q}_0) = \Pi_{\lambda(0)}^{2k}= \Pi^{2k}$ where $\lambda(0)$ is the initial covector of the lift to the cotangent bundle.	
		
		First of all we compute directly $\ind^-Q_{\gamma^k}$ using \Cref{thm: comparison theorem}, comparing with the fixed points problem. We get:
		\begin{equation*}
			\ind^-Q_{\gamma^{k}} = k \, \ind^-{Q_0} +  i\big(\Pi^{2k},\Gamma,\Delta^\circlearrowright \big) +\dim(\Gamma \cap \Pi^{2k})-\dim(\Gamma \cap \Pi^{2k} \cap A(\Delta^\circlearrowright)).
		\end{equation*}
		
		Here the notation $\ind^- {Q_0}$ stands for the index of $Q$ at $\gamma$ seen as a trajectory with fixed end points. First of all we analyse the term $i\big(\Pi^{2k},\Gamma,\Delta^\circlearrowright \big)$. To compute it we present the Maslov form as the direct sum of $k$ forms defined on a $n-$dimensional subspace. This is done in \Cref{lemma: diagonalizzazione maslov}, where we use the \emph{complexified} version of Maslov index. 
		
		The term $i\big(\Pi^{2k},\Gamma,\Delta^\circlearrowright \big)$ is thus the sum of contributions of the type $i \big(\Pi^2,\Gamma(\omega^j \Theta),\Delta\big)$ where $\omega$ is a primitive root of unity. 
		
		\begin{equation*}
			i\big(\Pi^{2k},\Gamma,\Delta^\circlearrowright \big) = \sum_{j=0}^{k-1} i \big(\Pi^2,\Gamma(\omega^j \Theta),\Delta\big).
		\end{equation*}
		Now we apply \Cref{thm: comparison theorem} to the second set of boundary conditions, i.e. $\Delta^k$. We find that:
		\begin{equation*}
			k \ind^- Q_\gamma = k \ind^- {Q_0} + i \big(\Pi^{2k},\Gamma,\Delta^k \big)+ \dim(\Gamma\cap \Pi^{2k})- \dim(\Gamma\cap \Pi^{2k}\cap \Delta^k).
		\end{equation*}
		Exactly as in the previous case the piece $i\big(\Pi^{2k},\Gamma,\Delta^k\big)$ splits as a sum. But this time the reason is more apparent: we are considering independent variation on each iteration.
		It follows that $i\big(\Pi^{2k},\Gamma,\Delta^k\big) = k \,  i \big (\Pi^2,\Gamma(\Theta),\Delta\big)$
	
		Now we subtract the two equations and we are left with the following expression for  $\ind^-Q_{\gamma^k} - k\, \ind^-Q_\gamma$:
		\begin{equation*}
			 \ind^-Q_{\gamma^k} - k\, \ind^-Q_\gamma = \sum_{j=0}^{k-1}\Big( i \big (\Pi^2, \Gamma(\omega^j \Theta),\Delta^\circlearrowright \big)-i \big (\Pi^2, \Gamma(\Theta),\Delta \big)\Big) +\dim (\Gamma \cap \Pi^{2k} \cap \Delta^k)- \dim (\Gamma \cap \Pi^{2k} \cap \Delta^{\circlearrowright}).
		\end{equation*}
		
		 Let's rewrite the term involving the intersections. It is straightforward to see that $\dim(\Gamma \cap \Pi^{2k} \cap \Delta^k) = k \dim (\Gamma(\Theta)\cap \Pi^2\cap \Delta)$. In turn this can be easily seen to be $k \dim (\ker(\Theta-1)\cap \Pi)$.
		 
		 For the second piece it holds that:
		 \begin{equation*}
		 	\begin{split}
		 	\dim (\Gamma\cap \Pi^k\cap \Delta^\circlearrowright) &= \sum_{j=0}^{k-1} \dim (\ker(\Theta-\omega^j)\cap \Pi).\\\end{split}
		 \end{equation*}
	 	We prove this below, in \Cref{prop: computation maslov iteration}. Putting all together we get:
	 	\begin{equation}
	 		\label{eq: proof iteration II triple intersection}
	 		\dim(\Gamma \cap \Pi^{2k} \cap \Delta^k)-\dim (\Gamma\cap \Pi^k\cap \Delta^\circlearrowright) = \sum_{j=0}^{k-1} \dim (\Gamma(\Theta)\cap \Pi^2\cap \Delta)- \dim (\ker(\Theta-\omega^j)\cap \Pi).
 		\end{equation}
	 	
		Now we can use the cocycle property given in \cref{eq: coboundary index} with the subspaces $\Pi, \Gamma(\omega^j \Theta), \Gamma(\Theta)$ and $\Delta$ to  express the terms in the sum using the Maslov index of the spaces $\Gamma(\omega^j \Theta)$ and  $\Delta$. These computations are collected in \Cref{prop: computation maslov iteration}. What we find is that:
		\begin{equation*}
			\begin{split}			
		i\big(\Pi^2, \Gamma(\omega^j \Theta),\Delta \big)-i \big( \Pi^2, \Gamma( \Theta),\Delta \big) &= -i \big(\Gamma(\Theta),\Delta,\Gamma(\omega^j\Theta) \big) +\dim (M) \\& \,\, -\dim (\ker(\Theta -1) \cap\Pi) +\dim (\ker(\Theta -\omega^{-j}) \cap\Pi)-\dim \ker(\Theta-\omega^{-j}).		
	\end{split}
	\end{equation*}

	Since we are summing over $j=0,\dots,k-1$ and $\omega$ is a primitive $k$-th root of unity, we have that $\sum_{j =0}^{k-1}\dim (\ker(\Theta -\omega^{-j}) \cap\Pi) = \sum_{j =0}^{k-1}\dim (\ker(\Theta -\omega^{j}) \cap\Pi) $ and thus the intersection of the eigenspaces with the fibre cancel out with the part coming from triple intersection given in \cref{eq: proof iteration II triple intersection}.
		
	Summing up we finally obtain:
	\begin{equation*}
		\ind^-Q_{\gamma^k} - k\, \ind^-Q_\gamma = \sum_{j=1}^{k-1}\dim (M) -\dim \ker(\Theta-\omega^j) - i \big ( \Gamma(\Theta),\Delta,\Gamma(\omega^j \Theta) \big).
	\end{equation*}
	Which is exactly the statement of the theorem. 
	\end{proof}
\end{thm*}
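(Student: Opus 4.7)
The plan is to set things up on the product space $M^k = M \times \dots \times M$ so that both the $k$-fold iterate $\gamma^k$ (viewed as a single periodic trajectory) and the trajectory consisting of $k$ independent copies of $\gamma$ become extremals of the \emph{same} control problem on $[0,1]$, distinguished only by their boundary conditions. Concretely I will introduce three boundary manifolds in $M^k \times M^k$: the cyclic diagonal $\Delta^\circlearrowright = \{(q_1,\dots,q_k,r_1,\dots,r_k)\,:\,q_i = r_{i-1}\}$ which encodes $\gamma^k$ as a periodic trajectory, the product diagonal $\Delta^k = \{(q_1,\dots,q_k,q_1,\dots,q_k)\}$ which encodes $k$ independent copies of $\gamma$ as periodic trajectories, and the single point $\underline{q}_0 = (q_0,\dots,q_0)$ which encodes $k$ independent copies of $\gamma$ with fixed endpoints. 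Setting $\Gamma = \Gamma(\Theta \times \cdots \times \Theta)$ and $\Pi^{2k} = T_{\underline{\lambda}}A(\underline{q}_0)$, two applications of \Cref{thm: comparison theorem} (one against $\Delta^\circlearrowright$, one against $\Delta^k$) express both $\ind^-Q_{\gamma^k}$ and $k\,\ind^-Q_\gamma$ as $k\,\ind^-Q_0$ plus Maslov and intersection corrections. Subtracting kills the $k\,\ind^-Q_0$ term and leaves a difference of Maslov indices plus a difference of intersection dimensions.

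The crucial step is the diagonalization of the Maslov index $i(\Pi^{2k},\Gamma,\Delta^\circlearrowright)$. The cyclic shift appearing in $\Delta^\circlearrowright$ is a linear symplectomorphism of $(T^*M)^k$ that is diagonalized over $\C$ by the $k$-th roots of unity; applying the Hermitian Maslov machinery of \Cref{subsec:hermitian} and Lemma~\ref{lemma: hermitian maslov} I expect to obtain a block decomposition
\[
i(\Pi^{2k},\Gamma,\Delta^\circlearrowright) = \sum_{j=0}^{k-1} i(\Pi^2,\Gamma(\omega^j\Theta),\Delta),
\]
and by the same token the triple intersection dimension will split as $\sum_{j=0}^{k-1}\dim(\ker(\Theta-\omega^j)\cap\Pi)$. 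The analogous decomposition for the product boundary $\Delta^k$ is trivial (the $k$ summands are identical), giving $k\cdot i(\Pi^2,\Gamma(\Theta),\Delta)$ and $k\cdot\dim(\ker(\Theta-1)\cap\Pi)$. I will collect this formally in a separate lemma, which I anticipate will be the main technical obstacle, since it requires carefully choosing a Darboux basis adapted to the eigenspaces of the cyclic shift and verifying that the Hermitian form decomposes orthogonally into blocks indexed by $\omega^j$.

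Once the diagonalization is in hand, the difference $\ind^-Q_{\gamma^k} - k\,\ind^-Q_\gamma$ reduces to a sum over $j$ of terms of the form $i(\Pi^2,\Gamma(\omega^j\Theta),\Delta) - i(\Pi^2,\Gamma(\Theta),\Delta)$ plus intersection corrections. To convert each such difference into the shape $\dim(M) - \dim\ker(\Theta-\omega^j) - i(\Gamma(\Theta),\Delta,\Gamma(\omega^j\Theta))$ demanded by the theorem, I will invoke the coboundary formula \eqref{eq: coboundary index} on the quadruple $(\Pi,\Gamma(\omega^j\Theta),\Gamma(\Theta),\Delta)$: three of the four triple-index terms are computable in closed form and one of them is precisely $i(\Gamma(\Theta),\Delta,\Gamma(\omega^j\Theta))$, so rearranging gives the desired identity up to intersection corrections. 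A bookkeeping step then shows that the intersections $\dim(\ker(\Theta-\omega^{-j})\cap\Pi)$ cancel against the triple-intersection discrepancy between $\Delta^k$ and $\Delta^\circlearrowright$ (using that summing over $j$ is the same as summing over $-j$ modulo $k$), and that the $j=0$ term contributes zero, leaving only $j=1,\dots,k-1$. Putting everything together yields \eqref{eq: iteration formula I}.
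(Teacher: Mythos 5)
Your proposal is correct and follows essentially the same route as the paper's proof: the three boundary manifolds $\Delta^\circlearrowright$, $\Delta^k$, $\underline{q}_0$, the two applications of Theorem~\ref{thm: comparison theorem}, the Hermitian diagonalization of $i(\Pi^{2k},\Gamma,\Delta^\circlearrowright)$ via the cyclic shift (this is Lemma~\ref{lemma: diagonalizzazione maslov}), and the coboundary formula on the quadruple $(\Pi^2,\Gamma(\omega^j\Theta),\Gamma(\Theta),\Delta)$ together with the $j\mapsto -j$ re-indexing (Proposition~\ref{prop: computation maslov iteration}). Your observation that the $j=0$ summand of the difference $i(\Pi^2,\Gamma(\omega^j\Theta),\Delta)-i(\Pi^2,\Gamma(\Theta),\Delta)$ is automatically zero, so the cocycle manipulation is only needed for $j=1,\dots,k-1$, is exactly the bookkeeping the paper uses to pass from a sum over $j=0,\dots,k-1$ to the final sum over $j=1,\dots,k-1$.
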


		\begin{lemma}
			\label{lemma: diagonalizzazione maslov}
			Let $\omega \in \mathbb{C}$ be a primitive $k-$th root of the unity. The Maslov form $m(\Pi^{2k},\Gamma,\Delta^\circlearrowright) = \bigoplus_{i=0}^{k-1} m_i$ where:
			\begin{equation*}
				m_i = m(\Pi^2,\Gamma(\omega^i \Theta),\Delta).
			\end{equation*} 
			\begin{proof}
				We will use the Hermitian version of Maslov index. Any real subspace $V$ appearing in the proof will stand for its complexification $V \otimes \mathbb{C}$ without any mention of the tensor product operation. Let us write down the equation defining the space $(\Pi^{2k} + \Delta^\circlearrowright)\cap \Gamma$.
				\begin{equation*}
					v \in \Gamma \iff v = (\xi_1, \dots,\xi_k,\Theta(\xi_1),\dots,\Theta(\xi_k) ), \quad  \xi_j \in T_{\lambda_0}(T_{q_0}^*M).
				\end{equation*} 
				
				On the other hand belonging to $\Pi^{2k}+\Delta^\circlearrowright$ means:
				\begin{equation*}
					v \in \Pi^{2k}+\Delta^\circlearrowright \iff v = (\mu_1, \dots, \mu_k,\nu_1, \dots,\nu_k), \quad \mu_{i+1}-\nu_{i} \in \Pi,
				\end{equation*}
				where $\mu_{k+1}=\mu_1$. So the space $(\Pi^{2k} + \Delta^\circlearrowright)\cap \Gamma$ is given by $\{(\xi_1,\dots, \xi_k)\, : \, \xi_{i+1}-\Theta(\xi_{i}) \in \Pi\}$.
				
				Maslov form is computed in the following way. Let
\begin{align*}
\xi_i &= X_i+\alpha_i, \\
\Theta(\xi_i) &= X_{i+1}+\beta_i, 
\end{align*}
where $\alpha_i,\beta_i \in \Pi$, $X_i \in T_{\lambda_0}(T^*M)$. Then we have				
				\begin{equation*}
					\begin{split}
						 m(\underline{\xi}) = \sum_{i=1}^{k} -\sigma (\bar \alpha_i,X_i)+\sigma(\bar \beta_i,X_{i+1}) =\sum_{i=1}^{k} -\sigma (\bar \alpha_i,X_i)+\sigma( \bar \beta_{i-1},X_i)=\sum_{i=1}^{k} \sigma (-\bar \alpha_i+\bar \beta_{i-1},X_i) \\
						=\sum_{i=1}^{k} \sigma (-\bar \alpha_i+\bar \beta_{i-1},\xi_i)=\sum_{i=1}^{k} \sigma (\Theta(\bar \xi_{i-1})-\bar \xi_i,\xi_i) = 	\sum_{i=1}^{k}  \sigma (\Theta(\bar \xi_{i-1}), \xi_i)-\sigma(\bar \xi_i,\xi_i). 
					\end{split}				 
				\end{equation*}
				Where in the third equality we simply shifted the second index cyclically.

				Suppose that $\omega$ is a primitive $k-$th root of the identity and make the following change of variables.
				\begin{equation*}
					\underline{\xi} = (\xi_1,\dots,\xi_k)\mapsto \left(\sum_{i=1}^{k}\xi_i,\dots , \sum_{i=1}^k\omega^{j(i-1)}\xi_i,\dots,\sum_{i=1}^k\omega^{(k-1)(i-1)}\xi_i \right) =:\underline{\eta},
				\end{equation*}
				which essentially is just the Kronecker product of the identity with the transpose of Vandermonde's matrix obtained with $\{1,\omega,\dots,\omega^{k-1}\}$.  In the new coordinates the equation reads:
				\begin{equation*}
					\begin{split}
						\eta_l-\omega^{l-1}\Theta(\eta_l) &= \sum_{i=1}^k \omega^{(l-1)(i-1)}\xi_i-\sum_{i=1}^{k}\omega^{(l-1)i}\Theta(\xi_{i}) \\
						&= \sum_{i=1}^k \omega^{(l-1)i}\xi_{i+1}-\omega^{(l-1)i}\Theta(\xi_{i}) \\&=\sum_{i=1}^k \omega^{(l-1)i}(\xi_{i+1}-\Theta(\xi_{i}))
						\in \Pi.
					\end{split}
				\end{equation*}
So in the new coordinates the space  $(\Pi^{2k} + \Delta^\circlearrowright)\cap \Gamma$ splits as the direct sum $\bigoplus_{l=1}^k \{\eta \, : \, \eta-\omega^l\Theta(\eta) \in \Pi\}$.

The inverse transformation is given by the following rule:
				\begin{equation*}
					\xi_i = \frac{1}{k} \sum_{l=1}^k \omega^{-(i-1)(l-1)} \eta_{l}.
				\end{equation*}
		
	If we plug in the second term of the Maslov form we have:
				\begin{equation*}
					\begin{split}
						\sum_{i=1}^k \sigma(\Theta(\overline{\xi_{i}}),\xi_{i+1}) =\frac{1}{k^2} \sum_{i,l,s=1}^{k} \sigma(\Theta(\omega^{(i-1)(s-1)}\bar \eta_s),\omega^{-i(l-1)}\eta_l) \\
						=\frac{1}{k^2}\sum_{i,l,s=1}^{k}\omega^{i(s-l)}\omega^{-(s-1)} \sigma(\Theta(\bar \eta_s),\eta_l) =\frac{1}{k^2} \sum_{l,s=1}^k\Big (\sum_{i=1}^k(\omega^{i(s-l)})\omega^{1-s}\sigma(\Theta(\bar \eta_s),\eta_l)\Big). 
					\end{split}
				\end{equation*}
				
				In particular the only non zero terms are those for which $s = l$ since the sum of powers of any primitive root (up to $k$) is zero. 
				
				We can handle similarly the first term. In this way we find that the Maslov form on our subspace splits in the following way:
				\begin{equation*}
					m(\underline{\eta}) = \frac{1}{k} \sum_{s=1}^k \sigma(\overline{\omega^{s-1}\Theta( \eta_s)},\eta_s)-\sigma(\bar\eta_s,\eta_s).
				\end{equation*}
				The factor $\frac{1}{k}$ is irrelevant for us and comes just from the change of coordinates. The last step is to identify the summands with $m(\Pi^2,\Gamma(\omega^{s-1} \Theta),\Delta)$. Let's write down the kernel for these forms. The space we have to look at is $(\Pi^2+\Delta)\cap \Gamma(\omega^{s-1}\Theta)$. It is defined by:
				\begin{equation*}
					\eta = \alpha + X \quad \omega^{s-1}\Theta(\eta) = \beta+X \quad \alpha, \beta\in \Pi.
				\end{equation*}
			By the definition the Maslov form is given by
			\begin{equation*}
				m(\eta) = -\sigma(\bar \alpha,X)+\sigma(\bar \beta,X) = \sigma(\overline{\omega^{s-1}\Theta(\eta)}-\bar \eta,\eta). 
			\end{equation*}
			\end{proof}
		\end{lemma}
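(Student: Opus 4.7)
My strategy exploits the cyclic $\mathbb{Z}/k$-symmetry of the boundary condition $\Delta^\circlearrowright$ by performing a discrete Fourier decomposition of the Maslov form. I work throughout with the complexified Maslov index described in \Cref{subsec:hermitian}, since the roots of unity $\omega^i$ force us off the real locus.

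First, I would identify the subspace on which the form lives. An element of $\Gamma$ has the form $(\xi_1,\dots,\xi_k,\Theta(\xi_1),\dots,\Theta(\xi_k))$ with $\xi_i\in T_{\lambda(0)}(T^*M)_{\mathbb C}$, while an element of $\Pi^{2k}+T_{\underline\lambda}A(\Delta^\circlearrowright)$ is any tuple $(\mu_1,\dots,\mu_k,\nu_1,\dots,\nu_k)$ satisfying $\mu_{i+1}-\nu_i\in\Pi$ cyclically. Intersecting these constraints yields
\[
(\Pi^{2k}+T_{\underline\lambda}A(\Delta^\circlearrowright))\cap\Gamma \;\cong\; \{(\xi_1,\dots,\xi_k) : \xi_{i+1}-\Theta(\xi_i)\in\Pi,\ i=1,\dots,k\}.
\]
Second, I would compute the Maslov form in these coordinates: decomposing $\xi_i=X_i+\alpha_i$ and $\Theta(\xi_i)=X_{i+1}+\beta_i$ with $\alpha_i,\beta_i\in\Pi$, applying the definition of $m$ and reindexing cyclically gives
\[
m(\underline{\xi}) = \sum_{i=1}^{k}\bigl(\sigma(\Theta(\bar\xi_{i-1}),\xi_i)-\sigma(\bar\xi_i,\xi_i)\bigr).
\]

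Third, I would introduce the discrete Fourier change of basis adapted to the cyclic group $\mathbb{Z}/k$: set $\eta_l=\sum_{i=1}^{k}\omega^{(l-1)(i-1)}\xi_i$, with inverse $\xi_i=\tfrac{1}{k}\sum_{l=1}^{k}\omega^{-(i-1)(l-1)}\eta_l$. The cyclic shift operator $\xi_i\mapsto\xi_{i+1}$ acts by the scalar $\omega^{-(l-1)}$ on the $l$-th Fourier component, so the simultaneous constraints $\xi_{i+1}-\Theta(\xi_i)\in\Pi$ decouple into $k$ independent constraints $\eta_l-\omega^{l-1}\Theta(\eta_l)\in\Pi$. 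In particular the domain of $m$ splits as a direct sum indexed by $l$, with the $l$-th summand naturally identified with $(\Pi^2+T A(\Delta))\cap\Gamma(\omega^{l-1}\Theta)$.

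Fourth, I would substitute the inversion formula into $m$ and use the orthogonality relation $\sum_{i=1}^{k}\omega^{i(s-l)}=k\,\delta_{s,l}$ to kill cross terms. What survives is
\[
m(\underline\eta) \;=\; \frac{1}{k}\sum_{s=1}^{k}\bigl(\sigma(\overline{\omega^{s-1}\Theta(\eta_s)},\eta_s)-\sigma(\bar\eta_s,\eta_s)\bigr),
\]
which is manifestly a direct sum of $k$ Hermitian forms. Finally, I would verify that the $s$-th summand is exactly $m(\Pi^2,\Gamma(\omega^{s-1}\Theta),\Delta)$ by running through Steps~1 and 2 in the baby case $k=1$ with $\Theta$ replaced by $\omega^{s-1}\Theta$ and observing that the resulting expression matches term by term.

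The main obstacle I anticipate is purely computational bookkeeping: tracking conjugations carefully in the Hermitian setting (since $\sigma_{\mathbb C}$ is sesquilinear and the roots $\omega^{l-1}$ are genuinely complex), and unravelling the cyclic indices under the Vandermonde-type change of variables. A minor but important subtlety is that the overall prefactor $1/k$ inherited from the inverse Fourier transform is inessential for the signature and index of a Hermitian form, so the identification in the last step is a bona fide equivalence of Maslov forms in the sense needed by \Cref{thm:iteration}.
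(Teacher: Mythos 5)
Your proof follows exactly the route the paper takes: identify the domain as $\{(\xi_1,\dots,\xi_k):\xi_{i+1}-\Theta(\xi_i)\in\Pi\}$, compute $m(\underline\xi)=\sum_i\bigl(\sigma(\Theta(\bar\xi_{i-1}),\xi_i)-\sigma(\bar\xi_i,\xi_i)\bigr)$, diagonalize via the discrete Fourier/Vandermonde change of variables, use orthogonality of roots of unity to kill the off-diagonal terms, and identify each surviving block with $m(\Pi^2,\Gamma(\omega^{s-1}\Theta),\Delta)$ up to the irrelevant $1/k$ factor. No gaps, and the approach is the same as the paper's.
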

		
		\begin{prop}
			\label{prop: computation maslov iteration}
			The following relation holds:
			\begin{equation}
				i\big(\Pi^2, \Gamma(\omega^j \Theta),\Delta \big)-i \big( \Pi^2, \Gamma(\Theta),\Delta \big) = -i \big(\Gamma(\Theta),\Delta,\Gamma(\omega^j\Theta) \big) +\dim (M) +d_j,
			\end{equation}
			where $d_j = -\dim (\ker(\Theta -1) \cap\Pi) +\dim (\ker(\Theta -\omega^{-j}) \cap\Pi)-\dim \ker(\Theta-\omega^{-j})$.
			Moreover the space $\Gamma \cap \Pi^{2k}\cap \Delta^\circlearrowright $ splits as a direct sum and its dimension is given by:
			\begin{equation*}
				\dim (\Gamma \cap \Pi^{2k}\cap \Delta^\circlearrowright ) = \sum_{j=0}^{k-1} \dim (\ker(\Theta-\omega^j)\cap \Pi).
			\end{equation*}	
			\begin{proof}
				The second part can be deduced by the proof of \Cref{lemma: diagonalizzazione maslov}. In fact the space $\Pi^{2k}\cap \Gamma\cap \Delta^\circlearrowright$ is isomorphic to $\bigoplus_i\ker(\Theta-\omega^i)\cap \Pi$. This can be either directly computed from the definition of the spaces or deduced in the following way.
				
				Let $P$ represent the standard $k-$cycle which maps $\xi_i \to \xi_{i+1}$ and $\xi_k\to \xi_1$. 
				A direct calculation shows that $\Delta^\circlearrowright = \Gamma(P)$. Thus any element of $\Delta^\circlearrowright \cap \Gamma$ can be written as 
				$$
				\begin{pmatrix}
					\xi \\ P\xi 
				\end{pmatrix}=
			\begin{pmatrix}
				\xi \\ diag(\Theta)(\xi)
			\end{pmatrix} \iff  P^{-1}diag(\Theta)(\xi) = \xi \iff diag(\Theta)P^{-1}(\eta) = \eta, \text{ where } \eta = P\xi.
				$$	
				 i.e. an eigenvalue problem.
				
				 The core of the proof of \Cref{lemma: diagonalizzazione maslov} consisted in the diagonalization of the following matrix:
				\begin{equation*}
					\begin{pmatrix}
						\Theta &  &\\
						&\ddots & \\
						& &\Theta
					\end{pmatrix}P^{-1} \sim 	\begin{pmatrix}
						\omega^0\Theta &  &\\
						&\ddots & \\
						& &\omega^{k-1}\Theta
					\end{pmatrix}
				\end{equation*} 
	with the remaining elements understood to be zero. The transformation diagonalizing the matrix we used preserves the fibre. So it follows that $\Pi^{2k}\cap \Gamma\cap \Delta^\circlearrowright$ is the sum of the eigenspaces $\ker(\Theta-\omega^j)$ intersected with the fibre $\Pi$.
				
	Now we prove the first part of the proposition. Let us apply the cocycle property to $\Pi^2,\Gamma(\omega^j\Theta),\Gamma(\Theta) $ and $\Delta$.
				\begin{equation*}
					\begin{split}
						i&\big (\Pi^2,\Gamma(\omega^j\Theta),\Delta \big)-i \big(\Pi^2,\Gamma(\Theta),\Delta \big) = i \big (\Gamma(\Theta),\Pi^2,\Gamma(\omega^j \Theta)\big) \\&\qquad   -i \big(\Gamma(\Theta),\Delta,\Gamma(\omega^j \Theta)\big)+ c_i,\\
						c_i &= \dim(\Theta(\Pi)\cap \Pi)-\dim(\ker(\Theta-1)\cap \Pi) +\dim(\ker(\Theta-\omega^{-j})\cap \Pi)+\\&\qquad  - \dim\ker(\Theta-\omega^{-j}).
					\end{split}
				\end{equation*}
				
			The formula is almost the one given in the statement except for the terms $\dim(\Theta(\Pi)\cap \Pi)$ and $i\big(\Gamma(\Theta),\Pi^2,\Gamma(\omega^j \Theta)\big)$ and a lacking $\dim(M)$.
				
			We can compute the Maslov index term in the following way. Notice that $\Gamma(\omega^j \Theta)$ and $\Gamma(\omega^l \Theta)$ are transversal if the index $j$ is different form $l$. It follows that the space on which the form is defined is $\Pi^2$. Moreover the equations are $\xi_1+\xi_2 = \nu_1 \in \Pi$ and $\Theta(\xi_1+\omega^j \xi_2) = \nu_2 \in \Pi $. Thus Maslov form reads:
				\begin{equation*}
					m(\nu_1,\nu_2) =-\sigma (\bar \xi_1,\xi_2)+\omega^j\sigma(\Theta(\bar\xi_1),\Theta(\xi_2)) = (\omega^j-1)\sigma(\bar \xi_1,\xi_2) . 
				\end{equation*}
				
				We can invert the equations to write them on $\Pi^2$. We get $\xi_2 = \frac{1}{1-\omega^j}(\nu_1-\Theta^{-1}(\nu_2))$ and $\xi_1 = \frac{1}{1-\omega^j}(\Theta^{-1}(\nu_2)-\omega^j \nu_1)$ and thus the form is equivalent to:
				\begin{equation*}
	m(\nu_1,\nu_2) = \frac{1}{\bar\omega^j-1}(\sigma(\bar\nu_2,\Theta(\nu_1))+\omega^{-j}\sigma(\bar\nu_1,\Theta^{-1}(\nu_2))). 
				\end{equation*}
				
				This form has zero signature and kernel isomorphic to two copies of $\Theta(\Pi)\cap \Pi$. This is a general fact and can be seen as follow. Suppose the matrix representing the quadratic form has the following expression:
				\begin{equation*}
					\cM = \begin{pmatrix}
						0 & X \\ \bar X^* &0
					\end{pmatrix} \quad m(\nu_1,\nu_2) =  \langle \bar\nu_1,X \nu_2 \rangle +\langle \bar \nu_2,\bar X^* \nu_1 \rangle  .
				\end{equation*}
				
				Let be $Q$ and $R$ unitary matrices which gives the singular values decomposition for $X$, i.e. $ QXR = D$ for $D=diag(d^2_i)$, diagonal and with non negative entries.
				
				Apply the following change of coordinates to $\cM$:
				\begin{equation*}
					\begin{pmatrix}
						Q & 0 \\ 0 & \bar R^*
					\end{pmatrix} \begin{pmatrix}
						0 & X \\ \bar X^* &0
					\end{pmatrix}\begin{pmatrix}
						\bar Q^* & 0 \\ 0 &  R
					\end{pmatrix} = \begin{pmatrix}
						0 & D \\ D &0
					\end{pmatrix}.
				\end{equation*}
				
				And then apply another change:
				\begin{equation*}
					\begin{pmatrix}
						1 & -1 \\ 1 & 1
					\end{pmatrix} \begin{pmatrix}
						0 & D \\ D &0
					\end{pmatrix}\begin{pmatrix}
						1 & 1 \\ -1 &  1
					\end{pmatrix} = \begin{pmatrix}
						-2D & 0 \\ 0 &2D
					\end{pmatrix}.
				\end{equation*}
		
		Thus the non zero eigenvalues of the matrix $\cM$ are $\pm d^2_i$, where $d^2_i>0$ are the positive singular values of $X$. The kernel of $\cM$ has dimension $2 \dim \ker(X)$.

		This is precisely our situation: fix a Lagrangian complement to the fibre $\Pi$, and consider the matrices associated to $\Theta$ and $ \frac{\omega^{-j}}{\omega^{-j}-1}J\Theta^{-1}$. In blocks they can be written as:
				\begin{equation*}
					\Theta = \begin{pmatrix}
						A &B \\ C &D
					\end{pmatrix}, \quad  J\Theta^{-1} = \begin{pmatrix}
						C^* &-A^* \\ D^* &-B^*
					\end{pmatrix}, \quad J \Theta = \begin{pmatrix}
					-C &-D \\ A &B
				\end{pmatrix}.
			\end{equation*}
		We are using coordinates in which the fibre $\Pi$ is the span of the first $n$ coordinates. Thus the block we have to consider is always the upper left one. Our form, with this conventions, is written as:
		\begin{equation*}
				m(\nu_1,\nu_2) = \left \langle \bar \nu_2, \frac{1}{1-\omega^{-j}}C\nu_1\right\rangle  + \left\langle \bar \nu_1, \frac{\omega^{-j}}{\omega^{-j}-1}C^*\nu_2\right\rangle. 
		\end{equation*}
So for us $X = \frac{\omega^{-j}}{1-\omega^{-j}}C^*$. 
				Thus our form has zero signature, is defined on a $2 \dim (M)$ dimensional vector space and the kernel is isomorphic to two copies of the kernel of $X$. The latter is easily seen to be  $\Theta(\Pi) \cap \Pi$.
				
				Thus it follows that $i\big(\Gamma(\Theta),\Pi^2,\Gamma(\omega^j \Theta)\big) = \dim(M)-\dim(\Theta(\Pi)\cap \Pi) $. Inserting above we get the formula in the statement. 
			\end{proof}
		\end{prop}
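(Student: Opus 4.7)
My plan is to prove the two assertions in the order stated, using the cocycle formula~\eqref{eq: coboundary index} for the Maslov index as the main engine for the first claim and the cyclic-shift diagonalization of Lemma~\ref{lemma: diagonalizzazione maslov} for the second.

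First I would dispose of the dimension formula for $\Gamma\cap\Pi^{2k}\cap\Delta^\circlearrowright$. As observed in passing in the paper, $\Delta^\circlearrowright$ is precisely the graph of the cyclic-shift symplectomorphism $P:(\xi_1,\ldots,\xi_k)\mapsto(\xi_2,\ldots,\xi_k,\xi_1)$, so an element of $\Gamma\cap\Delta^\circlearrowright$ corresponds to a solution of the eigenproblem $\operatorname{diag}(\Theta)P^{-1}\eta=\eta$. The block matrix $\operatorname{diag}(\Theta)P^{-1}$ is already conjugated to $\operatorname{diag}(\omega^{0}\Theta,\ldots,\omega^{k-1}\Theta)$ in the proof of Lemma~\ref{lemma: diagonalizzazione maslov} via the Vandermonde transformation built from a primitive $k$-th root $\omega$; crucially, that transformation preserves each factor $\Pi$. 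So intersecting with $\Pi^{2k}$ commutes with the decomposition, and the triple intersection splits as $\bigoplus_{j=0}^{k-1}\ker(\Theta-\omega^{j})\cap\Pi$, giving the claimed dimension.

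For the Maslov identity I would apply the coboundary identity~\eqref{eq: coboundary index} to the quadruple $(L_0,L_1,L_2,L_3)=(\Pi^2,\Gamma(\omega^j\Theta),\Gamma(\Theta),\Delta)$. Two of the four indices thus produced are exactly those appearing on the left-hand side; the other two are $i(\Gamma(\Theta),\Pi^2,\Gamma(\omega^j\Theta))$ and $i(\Gamma(\Theta),\Delta,\Gamma(\omega^j\Theta))$, up to cyclic-permutation invariance~\eqref{eq: invariance cyclic permutation index}. The dimension part of~\eqref{eq: coboundary index} produces the terms $\dim(\Gamma(\omega^j\Theta)\cap\Delta)-\dim(\Pi^2\cap\Gamma(\Theta))$, together with the alternating sum of triple intersections. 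The first of these is $\dim\ker(\Theta-\omega^{-j})$, the second is $\dim(\Theta(\Pi)\cap\Pi)$, and the triple intersections collapse using $\Gamma(\omega^j\Theta)\cap\Gamma(\Theta)=0$ for $\omega^j\neq 1$ and the obvious identifications of $\Pi^2\cap\Gamma(\Theta)$ with $\ker(\Theta-1)\cap\Pi$ (and similarly with $\omega^{-j}$). Collecting terms produces the stated formula modulo the single remaining Maslov index $i(\Gamma(\Theta),\Pi^2,\Gamma(\omega^j\Theta))$, which I claim equals $\dim(M)-\dim(\Theta(\Pi)\cap\Pi)$; this will cancel the $\dim(\Theta(\Pi)\cap\Pi)$ produced above and leave exactly the $+\dim(M)$ in the statement.

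The remaining step, and the only real obstacle, is the explicit computation of $i(\Gamma(\Theta),\Pi^2,\Gamma(\omega^j\Theta))$. The plan is: pick a Lagrangian complement of $\Pi$ to obtain Darboux coordinates on $T_{\lambda(0)}(T^*M)$, write $\Theta$ in block form $\begin{pmatrix}A&B\\ C&D\end{pmatrix}$, and parametrize the intersection $(\Pi^2+\Gamma(\omega^j\Theta))\cap\Gamma(\Theta)$: transversality of $\Gamma(\Theta)$ and $\Gamma(\omega^j\Theta)$ identifies this intersection with the full $\Pi^2$, on which the Maslov form has matrix
\begin{equation*}
\cM=\begin{pmatrix} 0 & X\\ \bar X^{*} & 0\end{pmatrix},\qquad X=\tfrac{\omega^{-j}}{1-\omega^{-j}}\,C^{*}.
\end{equation*}
The hard part is then purely linear-algebraic: show that such an off-diagonal Hermitian block has signature zero, kernel of dimension $2\dim\ker X$, and thus negative index exactly $\dim(M)-\dim\ker X$. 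I would carry this out via the singular-value decomposition $QXR=D$ with unitaries $Q,R$ and diagonal $D\geq 0$, followed by the symplectic change of basis $\begin{pmatrix}1&-1\\ 1&1\end{pmatrix}$, which block-diagonalizes $\begin{pmatrix}0&D\\ D&0\end{pmatrix}$ into $\begin{pmatrix}-2D&0\\ 0&2D\end{pmatrix}$. Finally, I identify $\ker C$ with $\Theta(\Pi)\cap\Pi$ by a direct reading of the block representation of $\Theta$ in these Darboux coordinates, so that the negative index is $\dim(M)-\dim(\Theta(\Pi)\cap\Pi)$, completing the chain of cancellations and proving the proposition.
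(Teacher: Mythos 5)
Your proposal is correct and follows essentially the same route as the paper's own proof: the same cyclic-shift/Vandermonde diagonalization from Lemma~\ref{lemma: diagonalizzazione maslov} for the triple-intersection dimension, the same application of the coboundary identity~\eqref{eq: coboundary index} to the quadruple $(\Pi^2,\Gamma(\omega^j\Theta),\Gamma(\Theta),\Delta)$, and the same SVD computation showing the off-diagonal Hermitian block has zero signature and kernel $2\dim\ker X$, yielding $i\big(\Gamma(\Theta),\Pi^2,\Gamma(\omega^j\Theta)\big)=\dim(M)-\dim(\Theta(\Pi)\cap\Pi)$. The only cosmetic difference is that you speak of $\ker C$ where the paper's $X$ is a multiple of $C^{*}$, but the dimensions coincide, so the argument is unaffected.
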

		
		We can consider the function $\mathbb S^1 \ni z \mapsto i (\Gamma(\Theta),\Delta,\Gamma(z\Theta))$. It has very nice properties and an explicit description in terms of the monodromy matrix $\Theta$. These ideas are collected in the following proposition.
		
		\begin{prop}
			\label{lemma: properties function on circle}
			The number $i\left(\Gamma(\Theta),\Delta,\Gamma(\omega^j\Theta)\right)$ corresponds to the number of negative eigenvalues of the following matrix:
			\begin{equation*}
				M_{\omega^j} =\frac{1}{1- \omega^{-j}}J\Big(\omega^{-j}+1-\omega^{-j}\Theta-\Theta^{-1}\Big). 
			\end{equation*}
			If we consider the function $\mathbb{S}^1 \ni z \mapsto i\big (\Gamma(\Theta),\Delta,\Gamma(z\Theta)\big )$, it is locally constant with at most $2n$ jumps at eigenvalues of $\Theta$. Moreover the jumps are bounded in amplitude by $\dim(\ker(\Theta-z))$ where $z \in \mathbb S^1$.
			\begin{proof}
				The first part is just a straightforward computation. Take for any $\alpha \in \mathbb S^{1}$:
				\begin{equation*}
					\begin{pmatrix}
						\xi_1 \\ \Theta(\xi_1)
					\end{pmatrix} +\begin{pmatrix}
						\xi_2 \\ \alpha \Theta(\xi_2)
					\end{pmatrix}
					=\begin{pmatrix}
						X \\ X
					\end{pmatrix} \Rightarrow \begin{cases}
						(1-\alpha)\xi_2 = X-\Theta^{-1}(X), \\
						(\alpha-1)\xi_1 = \alpha X-\Theta^{-1}(X).
					\end{cases}
				\end{equation*} 
				
				If $\alpha \ne 1$ the two graphs are always transversal and the Maslov quadratic form can be written in terms of the variable $X$:
				\begin{equation*}
					\begin{split}
						m(X) &=- \sigma(\bar\xi_1,\xi_2) +\alpha \sigma(\Theta(\bar\xi_1),\Theta(\xi_2))=(\alpha-1)\sigma(\bar\xi_1,\xi_2)  \\ &=\frac{1}{1-\bar \alpha}\sigma\Big(\bar \alpha\bar X- \Theta^{-1}(\bar X),X-\Theta^{-1}(X)\Big) \\&= \frac{\bar \alpha +1}{1-\bar{\alpha} } \sigma(\bar X,X)-\frac{1}{1-\bar\alpha}\sigma((\bar \alpha\Theta+\Theta^{-1})(\bar X),X). 
					\end{split}
				\end{equation*}
				
				It follows that the kernel is $M_\alpha = \frac{1}{1-\bar \alpha}J\Big(\bar \alpha+1-\bar\alpha\Theta-\Theta^{-1}\Big) $.

				For the second part notice that the map $\alpha \mapsto M_\alpha$ is continuous away from $1$ with values in the space of Hermitian matrices. 
				
				A change of index can occur only at those points in which the determinant of $M_z$ is zero, thus at most $2n$ times. Moreover the jumps are the following:
				\begin{equation*}
					\det(M_\alpha) = 0 \iff \det(\bar \alpha +1 -\bar \alpha \Theta-\Theta^{-1})=0,\, \alpha \ne 1.
				\end{equation*}
				
				In particular, notice that $\Theta$ and $\Theta^{-1}$ can be put in the same block triangular form. For example one can choose to put $\Theta$ in its Jordan form. On the diagonal, at a block corresponding to eigenvalue $\lambda$ of $\Theta$, the elements are $\bar \alpha + 1 -\bar \alpha \lambda-\frac{1}{\lambda}$. This quantity is zero if and only if $\alpha = \frac{\lambda}{\vert \lambda \vert^2}$ i.e. if $\alpha$ is an eigenvalue of $\Theta$ that lies on the circle.
				
				Thus the jumps are at most $2n$. The part on the bound follows by this observation: take a Jordan block of $\Theta$ with eigenvalue $\lambda$. Then the corresponding block of $\Theta^{-1}$ will have $\bar \lambda$ on the diagonal and $(-1)^k\bar{\lambda}^{k+1}$	on the $k-$th upper diagonal. This implies that on the first upper diagonal of the $\lambda$ block of $\bar \lambda \Theta +\Theta^{-1}$ we considered you end up with $-\bar \lambda + \bar{\lambda}^2$, which is different from zero. Thus each $\lambda-$block contributes with a single eigenvalue and so the jumps are controlled by $\dim(\ker(\Theta-\lambda))$. 	
			\end{proof}
		\end{prop}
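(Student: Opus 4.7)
The plan is to compute the Maslov form for the triple $\big(\Gamma(\Theta),\Delta,\Gamma(\alpha\Theta)\big)$ explicitly for a general $\alpha\in\mathbb{S}^1$, read off the matrix $M_\alpha$, and then study how its spectrum varies as $\alpha$ moves along the circle. First I would note that whenever $\alpha\neq 1$, the two Lagrangian graphs $\Gamma(\Theta)$ and $\Gamma(\alpha\Theta)$ are transversal to each other: if $(\xi,\Theta\xi)=(\eta,\alpha\Theta\eta)$, then $\xi=\eta$ and $(1-\alpha)\Theta\xi=0$, forcing $\xi=0$. Therefore the sum $\Gamma(\Theta)+\Gamma(\alpha\Theta)$ is the whole ambient symplectic space, so the Maslov form is defined on all of $\Delta$, which we parametrize by the variable $X$ via $(X,X)\in\Delta$. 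Solving the linear system $(\xi_1,\Theta\xi_1)+(\xi_2,\alpha\Theta\xi_2)=(X,X)$ gives
\begin{equation*}
(\alpha-1)\xi_1=\alpha X-\Theta^{-1}(X), \qquad (1-\alpha)\xi_2=X-\Theta^{-1}(X),
\end{equation*}
which I substitute into the expression $m(X)=(\alpha-1)\sigma(\bar\xi_1,\xi_2)$ used throughout the preceding propositions for the Hermitian Maslov form.

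After this substitution, a short algebraic manipulation (using $\sigma(\cdot,\cdot)=\langle J\cdot,\cdot\rangle$ and the identity $\Theta^*J\Theta=J$) identifies the form with the Hermitian matrix
\begin{equation*}
M_\alpha=\frac{1}{1-\bar\alpha}J\bigl(\bar\alpha+1-\bar\alpha\Theta-\Theta^{-1}\bigr).
\end{equation*}
The Kashiwara/negative Maslov index of the triple is by definition the number of negative eigenvalues of this Hermitian form, which gives the first assertion.

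For the second part, the map $\alpha\mapsto M_\alpha$ is continuous on $\mathbb{S}^1\setminus\{1\}$ with values in Hermitian matrices, so its negative index is locally constant on every connected component of the complement of $\{\alpha:\det M_\alpha=0\}$. It therefore suffices to determine where $M_\alpha$ becomes singular. Since $J$ is invertible and the scalar prefactor is nonzero, this happens exactly when $\det\bigl(\bar\alpha+1-\bar\alpha\Theta-\Theta^{-1}\bigr)=0$. Putting $\Theta$ in Jordan form (and observing that $\Theta^{-1}$ is simultaneously upper triangular), the diagonal entry corresponding to an eigenvalue $\lambda$ of $\Theta$ is $\bar\alpha+1-\bar\alpha\lambda-\lambda^{-1}$, which vanishes iff $\alpha=\lambda/|\lambda|^2$; requiring $\alpha\in\mathbb{S}^1$ forces $|\lambda|=1$ and $\alpha=\lambda$. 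This produces at most $2n$ jump points.

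Finally, to bound the amplitude of each jump by $\dim\ker(\Theta-z)$, I would inspect a single Jordan block of size $r$ associated with an eigenvalue $\lambda$ on the unit circle, at the critical value $\alpha=\lambda$. The diagonal of that block of $\bar\alpha+1-\bar\alpha\Theta-\Theta^{-1}$ is zero, but the first superdiagonal of $\Theta^{-1}$ contributes $-\bar\lambda^{2}\neq 0$, so within that block the kernel of the restricted matrix is one-dimensional. Summing over the Jordan blocks corresponding to $\lambda$, the total kernel dimension of the critical block equals the number of Jordan blocks at $\lambda$, which is exactly $\dim\ker(\Theta-\lambda)$; hence the eigenvalue count of $M_\alpha$ can change by at most this number as $\alpha$ crosses $\lambda$. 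The main technical subtlety I anticipate is keeping track of conjugates and factors in the algebraic reduction to $M_\alpha$, together with a careful block-triangular argument for $\Theta^{-1}$ in Jordan coordinates to justify that only one eigenvalue per Jordan block crosses zero.
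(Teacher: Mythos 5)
Your proposal is correct and follows essentially the same path as the paper: solve the linear system for $\xi_1,\xi_2$ in terms of $X$, substitute into $m(X)=(\alpha-1)\sigma(\bar\xi_1,\xi_2)$, read off $M_\alpha$, locate zeros of $\det M_\alpha$ via the Jordan form, and bound the jumps block by block. One small imprecision worth flagging: at the critical value $\alpha=\lambda$ it is not the superdiagonal of $\Theta^{-1}$ alone that matters but that of the full matrix $\bar\alpha+1-\bar\alpha\Theta-\Theta^{-1}$, whose first superdiagonal is $-\bar\lambda+\bar\lambda^2=\bar\lambda(\bar\lambda-1)$; this is nonzero precisely because $\lambda\neq1$ (the paper itself is slightly loose on the sign here), and only then does the one-dimensional-kernel-per-Jordan-block conclusion follow.
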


\section{Index formulas and proof of \Cref{thm: comparison theorem}}
\label{section: proof comparison}

\subsection{Reduction to a variational problem with fixed end-points}
\label{subsec:reduction_to_fixed}
In this section we prove \Cref{thm: comparison theorem}. We will consider first the case in which the boundary constraints in \eqref{eq:boundary} are separated. This means that we look for a minimizer $\gamma$ with initial point $\gamma(0)$ in $N_0$ and final point $\gamma(1)$ in $N_1$, where $N_0,N_1 \subset M$ are embedded submanifolds. The general case will be reduced to this one.

Given an extremal trajectory $\gamma$ of the optimal control problem \eqref{eq:control}-\eqref{eq:functional} with $N = N_0 \times N_1$, we will now construct an extended optimal control problem with fixed end-points and interpret $\gamma$ as the restriction of an extremal $\hat \gamma$. Moreover we will show that the two problems are equivalent on a neighbourhood of $\gamma$ and $\hat\gamma$.

Denote $q_i = {\gamma}(i) \in N_i$ for $i = 0,1$. We want to construct the extended control system in such a way that all admissible curves $\hat{\alpha}$ connecting $q_0$ with $q_1$ sufficiently close to $\gamma$ are concatenations $\hat \alpha = \alpha_1 * \alpha * \alpha_0$, where $\alpha_i$ are curves inside $N_i$ and $\alpha$ connects $N_0$ to $N_1$. 

To do so fix neighbourhoods $\cO(q_i)\subset M$, $i = 0,1$ of the two points $q_0,q_1$. If $\cO(q_i)$ are sufficiently small, we can construct regular foliations of $\cO(q_i)$ such that $N_i \cap \cO(q_i)$ are leaves passing through $q_i$. We can consider the union of all tangent space to each leaf of the foliation. This gives us two integrable distributions $D_0$ and $D_1$ in each neighbourhood. Without loss of generality, we can choose a set of vector fields $f_i^j$, $j = 1,\dots, \dim N_i$ defined on $\cO(q_i)$, which generate these distributions:  
\begin{equation*}
	\text{span}\left \{f_i^j(q),\, j = 1,\dots, \dim N_i \right \} = D_i(q), \quad \forall q \in \mathcal{O}(q_i), \quad i = 0,1.
\end{equation*}
Using these vector fields we define an extended control system for times from $-1$ to $2$, which is linear in the controls for $t \in [0,1]^c$. Namely:
\begin{equation*}
	\hat{f}^t_u(q) = \begin{cases}
		f_0(q)u_0  := \sum_{j=1}^{\dim N_0} f_0^j(q)u_{0j}, \quad &\text{if }t <0,\\ 
		f_u^t(q), &\text{if }t \in [0,1],\\
		f_1(q)u_1 := \sum_{j=1}^{\dim N_1} f_1^j(q)u_{1j}, &\text{if }t>1.
	\end{cases}
\end{equation*}
where $u_0 \in \R^{\dim N_0}$ and $u_1 \in \R^{\dim N_1}$. The space of the extended controls $(u_0,u,u_1)$ is isomorphic to $\hat{\cU}=\mathbb{R}^{\dim N_0} \oplus L^\infty([0,1],\R^k) \oplus \mathbb{R}^{\dim N_1}$ and it is identified with functions which are locally constant on $[-1,0]$ and on $[1,2]$ with values in $\mathbb{R}^{\dim N_0}$ and $\mathbb{R}^{\dim N_1}$ respectively.

\begin{figure}[ht]
	
	\begin{center}
		\tikzset{every picture/.style={line width=0.75pt}} 
		
		\begin{tikzpicture}[x=0.75pt,y=0.75pt,yscale=-0.75,xscale=0.75]
			
			\draw  [fill={rgb, 255:red, 238; green, 237; blue, 237 }  ,fill opacity=1 ] (477,134) .. controls (493,94.95) and (587,114) .. (567,134) .. controls (547,154) and (558,203.95) .. (578,233.95) .. controls (598,263.95) and (508,275.95) .. (488,245.95) .. controls (468,215.95) and (461,173.05) .. (477,134) -- cycle ;
			\draw  [fill={rgb, 255:red, 212; green, 212; blue, 212 }  ,fill opacity=1 ] (450.67,116) .. controls (466.67,76.95) and (560.67,96) .. (540.67,116) .. controls (520.67,136) and (531.67,185.95) .. (551.67,215.95) .. controls (571.67,245.95) and (481.67,257.95) .. (461.67,227.95) .. controls (441.67,197.95) and (434.67,155.05) .. (450.67,116) -- cycle ;
			\draw  [fill={rgb, 255:red, 255; green, 255; blue, 255 }  ,fill opacity=1 ] (427,100) .. controls (443,60.95) and (537,80) .. (517,100) .. controls (497,120) and (508,169.95) .. (528,199.95) .. controls (548,229.95) and (458,241.95) .. (438,211.95) .. controls (418,181.95) and (411,139.05) .. (427,100) -- cycle ;
			\draw  [fill={rgb, 255:red, 238; green, 237; blue, 237 }  ,fill opacity=1 ] (174.06,123) .. controls (158.06,83.95) and (64.06,103) .. (84.06,123) .. controls (104.06,143) and (93.06,192.95) .. (73.06,222.95) .. controls (53.06,252.95) and (143.06,264.95) .. (163.06,234.95) .. controls (183.06,204.95) and (190.06,162.05) .. (174.06,123) -- cycle ;
			\draw  [fill={rgb, 255:red, 212; green, 212; blue, 212 }  ,fill opacity=1 ] (200.06,106) .. controls (184.06,66.95) and (90.06,86) .. (110.06,106) .. controls (130.06,126) and (119.06,175.95) .. (99.06,205.95) .. controls (79.06,235.95) and (169.06,247.95) .. (189.06,217.95) .. controls (209.06,187.95) and (216.06,145.05) .. (200.06,106) -- cycle ;
			\draw  [fill={rgb, 255:red, 255; green, 255; blue, 255 }  ,fill opacity=1 ] (226.06,89) .. controls (210.06,49.95) and (116.06,69) .. (136.06,89) .. controls (156.06,109) and (145.06,158.95) .. (125.06,188.95) .. controls (105.06,218.95) and (195.06,230.95) .. (215.06,200.95) .. controls (235.06,170.95) and (242.06,128.05) .. (226.06,89) -- cycle ;
			\draw [line width=1.5]    (185,179) .. controls (225,149) and (425,220.95) .. (465,190.95) ;
			\draw    (197.06,109) .. controls (237.06,79) and (419,139.95) .. (477.06,120.95) ;
			\draw    (185,179) .. controls (198,156.95) and (204,130.95) .. (197.06,109) ;
			\draw    (465,190.95) .. controls (457,161.95) and (472,139.95) .. (477.06,120.95) ;
			\draw    (320.36,184.52) -- (327.53,185.53) ;
			\draw [shift={(330.5,185.95)}, rotate = 188.04] [fill={rgb, 255:red, 0; green, 0; blue, 0 }  ][line width=0.08]  [draw opacity=0] (10.72,-5.15) -- (0,0) -- (10.72,5.15) -- (7.12,0) -- cycle    ;
			\draw    (198.36,146.52) -- (199.4,139.99) ;
			\draw [shift={(199.86,137.02)}, rotate = 458.97] [fill={rgb, 255:red, 0; green, 0; blue, 0 }  ][line width=0.08]  [draw opacity=0] (10.72,-5.15) -- (0,0) -- (10.72,5.15) -- (7.12,0) -- cycle    ;
			\draw    (325.64,110.22) -- (333.34,111.42) ;
			\draw [shift={(336.3,111.89)}, rotate = 188.88] [fill={rgb, 255:red, 0; green, 0; blue, 0 }  ][line width=0.08]  [draw opacity=0] (10.72,-5.15) -- (0,0) -- (10.72,5.15) -- (7.12,0) -- cycle    ;
			\draw    (464.71,157.8) -- (463.66,163.34) ;
			\draw [shift={(463.1,166.29)}, rotate = 280.72] [fill={rgb, 255:red, 0; green, 0; blue, 0 }  ][line width=0.08]  [draw opacity=0] (10.72,-5.15) -- (0,0) -- (10.72,5.15) -- (7.12,0) -- cycle    ;
			\draw  [fill={rgb, 255:red, 255; green, 255; blue, 255 }  ,fill opacity=1 ] (183.5,178) .. controls (183.5,176.58) and (184.62,175.43) .. (186,175.43) .. controls (187.38,175.43) and (188.5,176.58) .. (188.5,178) .. controls (188.5,179.42) and (187.38,180.57) .. (186,180.57) .. controls (184.62,180.57) and (183.5,179.42) .. (183.5,178) -- cycle ;
			\draw  [fill={rgb, 255:red, 255; green, 255; blue, 255 }  ,fill opacity=1 ] (462.5,190.95) .. controls (462.5,189.53) and (463.62,188.38) .. (465,188.38) .. controls (466.38,188.38) and (467.5,189.53) .. (467.5,190.95) .. controls (467.5,192.38) and (466.38,193.53) .. (465,193.53) .. controls (463.62,193.53) and (462.5,192.38) .. (462.5,190.95) -- cycle ;
			
			\draw (317.33,192.13) node [anchor=north west][inner sep=0.75pt]    {$\gamma $};
			\draw (326.33,86.46) node [anchor=north west][inner sep=0.75pt]    {$\hat{\alpha }$};
			\draw (148,78.79) node [anchor=north west][inner sep=0.75pt]    {$N_{0}$};
			\draw (484,87.46) node [anchor=north west][inner sep=0.75pt]    {$N_{1}$};
			\draw (171.67,180.13) node [anchor=north west][inner sep=0.75pt]    {$q_{0}$};
			\draw (470,192.13) node [anchor=north west][inner sep=0.75pt]    {$q_{1}$};

		\end{tikzpicture}
	\end{center}
	\caption{An admissible extended variation $\hat{\alpha}$ of an extremal curve $\gamma$ \label{pic:extend}}
\end{figure}
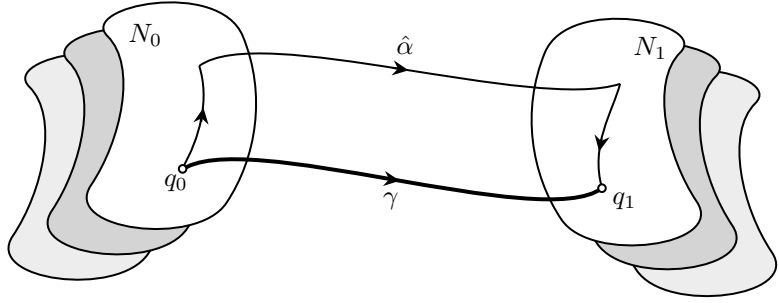

In Figure~\ref{pic:extend} the construction is explained visually. The local foliations are depicted in grey scale with the white surfaces being $N_0$ and $N_1$. An admissible curve $\hat{\alpha}$ of an extended system is confined to the leaf of the starting point up to time $0$, evolves with the law prescribed by the initial system and then continues inside the leaf reached at time $1$. In particular, if we are interested in extended admissible curves $\hat{\alpha}$ which connect $q_0$ and $q_1$, we get that $\hat{\alpha}|_{[0,1]}$ must connect $N_0$ to $N_1$.

We define the new optimal control problem as
\begin{equation}
	\label{eq:opt_ext1}
	\dot{q}=\hat{f}^t_{\hat{u}(t)}(q),  \qquad \hat u \in \R^{\dim N_0} \oplus L^\infty([0,1],\R^k) \oplus \R^{\dim N_1},
\end{equation}
\begin{equation}
	\label{eq:opt_ext2}
	q(-1)= q_0, \;\; q(2) = q_1,
\end{equation}
\begin{equation}
	\label{eq:opt_ext3}
	\min_{\hat u \in \hat{\cU}}\hat\varphi(\hat u) = \min_{\hat u \in \hat{\cU}} \int_0^1 \ell(t,u(t),q_u(t)) dt.
\end{equation}
where $u = \hat u|_{[0,1]}$.

\begin{lemma}
	\label{lem:equivalence}
	Optimal control problems \eqref{eq:control}-\eqref{eq:functional} with $N = N_0 \times N_1$ and~\eqref{eq:opt_ext1}-\eqref{eq:opt_ext3} are locally equivalent.
\end{lemma}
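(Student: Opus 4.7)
The plan is to set up a local bijection between admissible controls of the two problems and check that it intertwines the cost functionals. Since on $[-1,0]$ (resp.\ $[1,2]$) the extended dynamics \eqref{eq:opt_ext1} is a linear combination of vector fields spanning the integrable distribution $D_0$ (resp.\ $D_1$) whose leaf through $q_0$ (resp.\ $q_1$) is $N_0$ (resp.\ $N_1$), any admissible trajectory of the extended problem starting at $q_0$ at time $-1$ is confined to $N_0$ on $[-1,0]$, and similarly for $N_1$ on $[1,2]$. In particular, the restriction to $[0,1]$ is an admissible trajectory for \eqref{eq:control}--\eqref{eq:functional} with $N=N_0\times N_1$, and the cost $\hat{\varphi}$ coincides with $\varphi$ by construction.

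To go the other way, one has to solve the two endpoint matching problems: given an admissible $u\in L^\infty([0,1],\R^k)$ with $q_u(0)$ near $q_0$ on $N_0$ and $q_u(1)$ near $q_1$ on $N_1$, find constant controls $u_0\in\R^{\dim N_0}$ and $u_1\in\R^{\dim N_1}$ such that the time-$1$ flow of $\sum_j f_0^j u_{0j}$ starting from $q_0$ reaches $q_u(0)$, and the time-$1$ flow of $\sum_j f_1^j u_{1j}$ starting from $q_u(1)$ reaches $q_1$. The key step is to observe that the map
\[
 F_0:\R^{\dim N_0}\to N_0,\qquad u_0\mapsto \Phi^{1}_{\sum_j f_0^j u_{0j}}(q_0),
\]
is a local diffeomorphism near $u_0=0$: its differential at $0$ sends $\delta u_0$ to $\sum_j f_0^j(q_0)\,\delta u_{0j}$, which is a linear isomorphism onto $T_{q_0}N_0=D_0(q_0)$ since the $f_0^j(q_0)$ were chosen to form a basis of this space. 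The inverse function theorem thus produces a unique small $u_0$, depending smoothly on $q_u(0)$, and symmetrically $u_1$ depending smoothly on $q_u(1)$.

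Combining the two directions yields a smooth bijection $\Phi$ between a neighbourhood of $u$ (in the space of admissible controls of \eqref{eq:control}--\eqref{eq:functional} with $N=N_0\times N_1$) and a neighbourhood of $\hat u=(0,u,0)$ in $\hat{\cU}$, such that $\hat{\varphi}\circ\Phi=\varphi$. Consequently critical points correspond under $\Phi$, the extension $\hat\gamma$ (equal to $\gamma$ on $[0,1]$ and constant at $q_i$ on the additional intervals) is an extremal of the extended problem iff $\gamma$ is an extremal of the original one, and the second variations are identified as quadratic forms via $d\Phi$. The only mildly delicate point is the inverse function argument; everything else is a bookkeeping check that the boundary foliation makes the two problems parametrize the same set of admissible curves with the same cost.
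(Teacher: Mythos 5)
Your proof is correct and takes essentially the same approach as the paper: use integrability of the distributions $D_i$ to confine extended trajectories to $N_i$ on $[-1,0]$ and $[1,2]$, and build a local bijection between admissible controls by restriction to $[0,1]$ in one direction and endpoint matching on the leaves in the other, observing that the cost is unchanged. The one thing you spell out that the paper only asserts is the existence and uniqueness of the matching controls $u_0,u_1$, which you obtain cleanly from the inverse function theorem applied to $u_0\mapsto e^{\sum_j u_{0j}f_0^j}(q_0)$, whose differential at $0$ is the isomorphism $\delta u_0\mapsto\sum_j\delta u_{0j}f_0^j(q_0)$ onto $D_0(q_0)=T_{q_0}N_0$.
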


\begin{proof}
	If an admissible curve $\alpha$ of the original control system with control $u$ satisfies $\alpha(i) \in \cO(q_i)\cap N_i$ for $i = 0,1$, then it can be lifted to an admissible curve of the new system connecting $q_0$ and $q_1$. Indeed, take the unique controls $u_i$ for which
	$$
	\exp\left( f_0(\cdot )u_0 \right) q_0 = \alpha(0), \qquad \exp\left( f_1(\cdot )u_1 \right) \alpha(1) = q_1, 
	$$
	where $\exp$ denotes the flow of the vector field inside the brackets at time $t=1$. Then the lift $\hat{\alpha}$ is defined as
	
	\begin{equation*}
		\hat{\alpha}(t) =\begin{cases}
			\exp\left(t f_0(\cdot )u_0 \right) q_0, \quad & \text{if }t <0, \\ 
			\alpha(t), &\text{if }t \in[0,1], \\
			\exp\left(t f_1(\cdot )u_1 \right) \alpha(1), &\text{if }t>1,
		\end{cases} 
		\qquad 
		\hat{u}(t) =\begin{cases}
			u_0, \quad & \text{if }t <0, \\ 
			u(t), &\text{if }t \in[0,1], \\
			u_1, &\text{if }t>1.
		\end{cases}
	\end{equation*}
	
	Conversely, if we have an admissible curve $\hat \alpha$ such that $\hat \alpha(-1)=q_0$ and $\hat{\alpha}(2)=q_1$, then its restriction $\alpha = \hat{\alpha}|_{[0,1]}$ is a curve that connects $N_0$ to $N_1$ and such that $\alpha(i) \in \cO(q_i)$. Thus we obtain a local bijection between two spaces of admissible curves. 
	
	In order to finish the proof, it only remains to notice that $\hat{\varphi}(\hat{u}) = \varphi(u)$.
\end{proof}

\subsection{Computation of first and second variation of the extended problem}
\label{subsec:variation_formulas}

Let $\lambda:[0,1]\to T^*M$ be an extremal satisfying PMP for the problem \eqref{eq:control}-\eqref{eq:functional} with $N=N_0 \times N_1$. Denote by $\tilde{u}$ the corresponding control and by $\gamma(t) = \pi(\lambda(t))$, $t\in [0,1]$ the extremal curve on the manifold $M$. As was indicated in the proof of Lemma~\ref{lem:equivalence} we can extend $\gamma$ to an admissible curve of~\eqref{eq:opt_ext1}-\eqref{eq:opt_ext3} as

$$
\hat\gamma = \begin{cases}
	q_0, \quad & \text{if }t <0, \\ 
	\gamma(t), &\text{if }t \in[0,1], \\
	q_1, &\text{if }t>1,
\end{cases}
\qquad
\hat{\tilde u}(t) =\begin{cases}
	0, \quad & \text{if }t <0, \\ 
	\tilde u(t), &\text{if }t \in[0,1], \\
	0, &\text{if }t>1.
\end{cases}
$$
In order to simplify slightly the notations, we will omit in the future the hat symbol for $\hat{\tilde{u}}$ by essentially identifying $\tilde{u} \sim (0,\tilde{u},0)$.

In this section we compute the first and second variations of the problem~\eqref{eq:opt_ext1}-\eqref{eq:opt_ext3} at a critical point $\tilde{u}$. To do this, we use the already existing formulas for the fixed end-point problem which can be found in several references such as \cite{bookcontrol}. In order to do this we need to define all of the objects appearing in the formulas for the first and second variation which are written below.

The Hamiltonian of PMP is given by:
\begin{equation*}
	\hat h_{\hat u}^t(\lambda) = \begin{cases}
		\langle \lambda , f_0 u_0 \rangle =: h_0u_0, \quad & \text{if }t <0,\\
		\langle \lambda , f_u^t(\pi(\lambda))\rangle -\ell(t,u,\pi(\lambda)), &\text{if }t \in [0,1],\\
		\langle \lambda , f_1 u_1 \rangle =: h_1u_1,  &\text{if }t>1.
	\end{cases}
\end{equation*}
According to PMP a minimal control must maximize $\hat h_{\hat u}^t(\lambda)$. Given an extremal $\hat{\lambda}:[-1,2]\to T^*M$, let $\lambda_0$ and $\lambda_1$ be its restrictions to the intervals $[-1,2]$ and $[0,1]$. Since this family is linear in $u_0$ and $u_1$, $\lambda_i$ must lie in the annihilators $A(N_i)$. In particular, if $\lambda: [0,1]\to T^* M$ was an extremal of the original problem such that $\gamma(t) = \pi(\lambda(t))$, $\forall t\in [0,1]$, we can extend $\lambda$ to an extremal $\hat \lambda$ of problem~\eqref{eq:opt_ext1}-\eqref{eq:opt_ext3} exactly as before:
$$
\hat\lambda(t) = \begin{cases}
	\lambda(0), \quad & \text{if }t <0, \\ 
	\lambda(t), &\text{if }t \in[0,1], \\
	\lambda(1), &\text{if }t>1.
\end{cases}
$$

Denote by $\hat{\Phi}_t$ the flow generated by $\hat h_{\tilde u}^t$, or more precisely

\begin{equation*}
	\hat \Phi_t = \begin{cases}
		\mathbb{I}, &\text{if }t<0,\\
		\Phi_t, & \text{if }t \in [0,1], \\
		\Phi_1, & \text{if }t>1 .	
	\end{cases}
\end{equation*}
Since the vector field associated to $\tilde{u}$ is zero on $[-1,0] \cup [1,2]$ the flow is a constant transformation. Composing the Hamiltonian with the flow $\hat \Phi_t$ gives us

\begin{equation*}
	\hat b_{\hat u}^t(\lambda) = \big (\hat h_{\hat u}-\hat h_{\tilde{u}(t)} \big ) \circ \hat \Phi_t(\lambda) = \begin{cases}
		\langle \lambda , f_0 u_0 \rangle, \quad & \text{if }t <0,\\
		b_u^t(\lambda),  &\text{if }t \in [0,1],\\
		\langle \, \cdot \, , f_1 u_1 \rangle \circ \hat \Phi_1(\lambda), &\text{if }t>1.
	\end{cases} 
\end{equation*}
Then we can define
\begin{equation*}
	\hat Z_t = \partial_{\hat u} \overrightarrow{\hat b}_{\hat u}^t|_{\lambda = \lambda(0)}.
\end{equation*}
and denote $Z_0$ and $Z_1$ to be restrictions of $\hat{Z}_t$ to the time intervals $[-1,0]$ and $[1,2]$ correspondingly, so that 
\begin{equation*}
	\hat Z_t = \begin{cases}
		Z_0, \quad & \text{if }t <0,\\
		Z_t, & t \in [0,1], \\ 
		Z_1, \quad & \text{if }t >1.\\
	\end{cases}
\end{equation*}
It is worth noting that $Z_0$ and $Z_1$ are constant, since $b^t_{\hat{u}}(\lambda)$ is linear in $u_0$ and $u_1$ for $t\in[0,1]^c$. Finally we can define the quadratic form
$$
H_t = \left.\frac{\p^2 \hat{b}^t_{\hat{u}}}{\p \hat u^2}\right|_{\hat{u} = \tilde{u}}.
$$
Note that since $b^t_{\hat{u}}$ is linear in the control parameters for $t\in [-1,0] \cup [1,2]$, we have $H_t \equiv 0$ on the two intervals.

Recall that $\Pi := \Pi_{\lambda_0}$ denotes the vertical subspace, namely the tangent space to fibre $T^*_qM$ described in \cref{eq: fibre definition}.
With the notation set above, the kernel of the differential of the endpoint mapping and the second variation are:
\begin{equation}
	\label{differential endpoint mapping}
	\ker d_{\tilde{u}} E = \left\{\hat v \in \mathbb{R}^{\dim N_0} \oplus L^{\infty}([0,1],\R^k)\oplus \mathbb{R}^{\dim N_1} : \int_{-1}^2 \hat Z_t \hat v(t)dt\in \Pi\right \},  
\end{equation}
\begin{equation}
	\label{formula second variation}
	Q(\hat v,\hat w) = \int_{-1}^{2} \big [- H_t(\hat v(t),\hat w(t))-\int_{-1}^t\sigma (\hat Z_{\tau}\hat v(\tau),\hat Z_t \hat w(t)) d \tau \big ] dt,
\end{equation}
where $\hat v, \hat w \in \ker d_{\tilde{u}} E $. We can expand the expressions for the first and second variations knowing the particular form of $\hat Z_t$. We split the integrals into three integrals over the intervals $[-1,0]$, $[0,1]$ and $[1,2]$ and simplify the integrands using the skew-symmetry of $\sigma$:

They read as:

\begin{equation}
	\label{eq:differential endpoint mapping}
	\ker d_{\tilde{u}} E = \left\{v \in L^{\infty}[0,1], v_i \in \mathbb{R}^{\dim N_i} :\int_0^1 Z_tv(t)dt + Z_0v_0+Z_1v_1 \in \Pi\right\},
\end{equation}

\begin{equation}
	\label{quadratic form}
	Q(\hat v,\hat w) = \int_0^1\Big [-H_t(v(t),w(t))-\sigma \left(Z_0v_0 + \int_0^tZ_\tau v(\tau),Z_t w(t)\right)d \tau \Big ]dt -\sigma \left(Z_0v_0+\int_0^1 Z_t v(t) dt,Z_1w_1\right),
\end{equation}
where we have used the fact that $\hat Z_t$ is constant for $t\in [0,1]^c$ and its image lies in a Lagrangian subspace, and hence $\sigma(\hat Z_t \hat v(t),\hat Z_\tau \hat w(\tau)) = 0$ for all $\tau,t \in [-1,0]$, all $\tau,t \in [1,2]$ and any variations $\hat v,\hat w \in \ker d_{\tilde{u}} E$.

We finish the discussion of the first and second variations with an important observation concerning $Z_iv_i$, $i=0,1$.

\begin{lemma}
	\label{lemma:boundary_manifolds}
	$(P^i_{\tilde u})_*Z_iv_i$ are tangent to $A(N_i)$ for $i = 0,1$.
\end{lemma}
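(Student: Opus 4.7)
The plan is to make $Z_i v_i$ fully explicit as a linear combination of Hamiltonian vector fields associated with the $f_i^j$ that span $TN_i$, and then use the Lagrangian nature of $A(N_i)$ together with the fact that the corresponding Hamiltonian functions vanish on $A(N_i)$.

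First I specialize the definitions on the extended intervals. On $[-1,0]$ we have $\hat\Phi_t=\id$ and $\hat h^t_{\hat u}=\sum_j u_{0j}h_0^j$, where $h_0^j(\mu)=\langle\mu,f_0^j(\pi(\mu))\rangle$; differentiating at $\tilde u\equiv 0$ gives
\[
Z_0 v_0 = \sum_j v_{0j}\,\vec{h}_0^j(\lambda_0).
\]
On $[1,2]$ the flow is constant equal to $\hat\Phi_1$, and $\hat b^t_{\hat u}=\bigl(\sum_j u_{1j}h_1^j\bigr)\circ\hat\Phi_1$. Using the standard identity $\vec{(f\circ\Phi)}\big|_{\lambda_0}=(\Phi_*)^{-1}\vec{f}\big|_{\Phi(\lambda_0)}$ valid for any symplectomorphism $\Phi$, I obtain
\[
Z_1 v_1 = \sum_j v_{1j}\,(\hat\Phi_1)_*^{-1}\vec{h}_1^j(\lambda_1),
\]
with $\lambda_1=\hat\Phi_1(\lambda_0)$. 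Reading $P^i_{\tilde u}$ as the extremal flow $\hat\Phi_i$ (so $P^0_{\tilde u}=\id$ and $P^1_{\tilde u}=\hat\Phi_1$), the push-forwards become $\sum_j v_{0j}\vec{h}_0^j(\lambda_0)$ and $\sum_j v_{1j}\vec{h}_1^j(\lambda_1)$ respectively.

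The key step is to verify that each $\vec{h}_i^j(\lambda_i)$ is tangent to $A(N_i)$. Since $\lambda_i$ satisfies the transversality condition it lies in $A(N_i)$. For any $\mu\in A(N_i)$ we have $\pi(\mu)\in N_i$ and $\mu$ annihilates $T_{\pi(\mu)}N_i$; since $f_i^j$ is tangent to $N_i$ at points of $N_i$, this forces $h_i^j(\mu)=\langle\mu,f_i^j(\pi(\mu))\rangle=0$. Thus $h_i^j|_{A(N_i)}\equiv 0$, so $dh_i^j|_{T_{\lambda_i}A(N_i)}=0$, equivalently $\sigma(\vec{h}_i^j(\lambda_i),T_{\lambda_i}A(N_i))=0$. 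Because $A(N_i)$ is Lagrangian, $(T_{\lambda_i}A(N_i))^\perp=T_{\lambda_i}A(N_i)$, and therefore $\vec{h}_i^j(\lambda_i)\in T_{\lambda_i}A(N_i)$. The lemma follows by linearity in $v_i$.

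The only mildly delicate point is unpacking the pullback of the Hamiltonian gradient by $\hat\Phi_1$ in the $i=1$ case; once this is handled, the argument reduces to the elementary principle that Hamiltonians vanishing on a Lagrangian submanifold have Hamiltonian vector fields tangent to it.
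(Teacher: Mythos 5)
Your proof is correct, but it takes a genuinely different route from the paper's. The paper describes $A(N_i)$ locally as the zero locus of the Hamiltonians $l_j(\lambda)=\langle\lambda,f_i^j\rangle$ (together with the base constraint $\pi(\lambda)\in N_i$) and then verifies $d l_j(Z_i v_i)=0$ directly: it computes $\sigma_{\lambda(0)}(\vec{h}_0 v_0,\vec{l}_j)=\langle\lambda(0),[f_0 v_0,f_{0j}]\rangle$ and concludes this vanishes because the Lie bracket $[f_0 v_0,f_{0j}]$ stays in the involutive distribution $D_0$, whose value at $q_0$ is $T_{q_0}N_0$, which is annihilated by $\lambda(0)$. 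You instead observe that each $h_i^j$ vanishes \emph{identically} on $A(N_i)$ (since points of $A(N_i)$ sit over $N_i$ and annihilate $T N_i$, while $f_i^j$ is tangent to $N_i$), and then invoke the general principle that a Hamiltonian vanishing on a Lagrangian submanifold has Hamiltonian vector field tangent to it, via $(T_{\lambda_i}A(N_i))^\perp=T_{\lambda_i}A(N_i)$. Your approach sidesteps the Poisson-bracket/involutivity computation entirely (involutivity only enters implicitly through the construction of the foliation in \Cref{subsec:reduction_to_fixed}), which is arguably cleaner and more conceptual; the paper's approach is more pointwise and makes the role of integrability explicit. Both treat the $i=1$ case the same way, by conjugating by the extremal flow $\hat\Phi_1$.
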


\begin{proof}
	By PMP the initial (and final) covector annihilates $N_0$ (resp. $N_1$). 
	
	Recall that $f_{0i}$ generate the tangent space to $N_0$ close to $q_0$. We define the Hamiltonians
	$$
	l_i(\lambda) = \langle \lambda, f_{0i} \rangle, \qquad i= 1,\dots,\dim N_0.
	$$
	Then $A(N_0)$ can be equivalently described as the common part of the zero locus of $l_i$:
	\begin{equation*}
		A(N_0) = \{\lambda \in T^* M : \pi(\lambda) \in N_0, \,l_i(\lambda) = 0, \,i=1,\dots,\dim N_0\}.
	\end{equation*}
	But then by the definition of a Hamiltonian vector field
	$$
	d_{\lambda(0)} l_i(Z_0 v_0) = d_{\lambda(0)} l_i(\vec h_0 v_0) = \sigma_{\lambda(0)}(\vec h_0 v_0,\vec l_i) = \langle \lambda(0), [f_0v_0,f_{0i}]\rangle = 0,
	$$
	where the last equality is due to involutivity of the family $f_0v_0$.
	
	Similarly one has that $Z_1u_1$ is always tangent to the image of $A(N_1)$ under the diffeomorphism $(P_{\tilde{u}}^1)^{-1}$. 
\end{proof}

\subsection{Jacobi equation and second variation}
	
\label{subsec:Jacobi}
	
	For brevity denote $\cV = \ker d_{\tilde{u}} E$. Inside $\cV$ we look at a distinguished subspace

	\begin{equation}
		\label{eq:variations}
		V = \left\{\hat v\in \cV : v_0=0,v_1=0\right\},
	\end{equation}
	which corresponds to variations that fix the end-points $q_0$, $q_1$ of an extremal curve $\gamma$ at first order. More precisely, they constitute the tangent space to the manifold of controls fixing the end-points of $\gamma$. Hence $Q|_{V}$ is the second variation of the optimal control problem with fixed end-points and there exist efficient ways of computing the index of this quadratic forms using generalisations of classical Jacobi fields~\cite[Section 21]{bookcontrol}. Our goal is to compute the difference
	$$
	\ind Q - \ind Q|_{V}
	$$
	in terms of geometric objects on the manifold $M$, which will result in formula~\eqref{eq:main_index_formula} when $N = N_0\times N_1$. The main tool for computing the difference of indices is the following folklore lemma.

	\begin{lemma}
		\label{lemma index on subspaces} Suppose that $Q$ is a continuous quadratic form on a Hilbert space. Then for any subspace $V$ of finite codimension it holds:
		\begin{equation}
			\label{eq:formula_index}
			\ind Q = \ind Q|_V + \ind Q|_{V^{\perp_Q}}  + \dim\big(V \cap V^{\perp_Q}/(V \cap \ker Q)\big).
		\end{equation}
	\end{lemma}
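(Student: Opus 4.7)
The strategy is a Witt-type reduction. The plan is first to reduce to the non-degenerate case by passing to the quotient $\bar H = H/\ker Q$ with its induced non-degenerate form $\bar Q$: the subspace $\bar V := (V+\ker Q)/\ker Q$ still has codimension at most $d := \mathrm{codim}\,V$, and a direct check identifies $\bar V^{\perp_{\bar Q}}$ with $V^{\perp_Q}/\ker Q$, the three indices in the formula with their barred counterparts, and $\bar V \cap \bar V^{\perp_{\bar Q}}$ with the finite-dimensional quotient $(V \cap V^{\perp_Q})/(V \cap \ker Q)$. The lemma is thereby reduced to proving $\ind \bar Q = \ind \bar Q|_{\bar V} + \ind \bar Q|_{\bar W} + \dim(\bar V \cap \bar W)$ in the non-degenerate setting, where $\bar W := \bar V^{\perp_{\bar Q}}$.

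Dropping bars and writing $Q(x,y) = \langle Tx, y\rangle$ with $T$ injective, self-adjoint and bounded, one has $W = T^{-1}(V^\perp)$, which injects into the $d$-dimensional Hilbert orthogonal $V^\perp$, so $\dim W \leq d$. The isotropic intersection $I := V \cap W$ is then finite-dimensional of some dimension $k$, and non-degeneracy of $Q$ produces an isotropic complement $I^* \subseteq H$ of dimension $k$ such that $Q|_{I \oplus I^*}$ is hyperbolic, contributing exactly $k$ to $\ind Q$. Setting $H_0 := (I \oplus I^*)^{\perp_Q}$, $V_0 := V \cap H_0$, and $W_0 := W \cap H_0$, one checks the $Q$-orthogonal decompositions $V = V_0 \oplus I$ and $W = W_0 \oplus I$: any $v \in V$ decomposed along $H = I \oplus I^* \oplus H_0$ has vanishing $I^*$-component because $Q(v,I) = 0$ (using $I \subseteq W$) and the hyperbolic pairing $I \times I^* \to \mathbb{R}$ is perfect. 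These identities yield $\ind Q|_V = \ind Q|_{V_0}$, $\ind Q|_W = \ind Q|_{W_0}$, $V_0 \cap W_0 = I \cap H_0 = 0$, and $V_0 \perp_Q W_0$.

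Combined with the decomposition $\ind Q = k + \ind Q|_{H_0}$, the proof reduces to the identity $\ind Q|_{H_0} = \ind Q|_{V_0} + \ind Q|_{W_0}$. The main obstacle is this step, which rests on showing the direct-sum decomposition $V_0 \oplus W_0 = H_0$ so that $Q|_{H_0}$ splits $Q$-orthogonally; the inequality $\ind Q|_{H_0} \geq \ind Q|_{V_0} + \ind Q|_{W_0}$ follows immediately from $V_0 \perp_Q W_0$ and $V_0 \cap W_0 = 0$, and the converse is where the work lies. I plan to establish $V_0 \oplus W_0 = H_0$ by noting that $V_0$ has finite codimension $d-k$ in $H_0$, and then exploiting non-degeneracy of $Q|_{H_0}$ together with the natural pairing $W_0 \times (H_0/V_0) \to \mathbb{R}$ induced by $Q$ to match dimensions and force the equality. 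Once this is in place, assembling the pieces gives $\ind Q = \ind Q|_V + \ind Q|_W + \dim I$, which is the desired formula.
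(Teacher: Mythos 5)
The paper states this result as a ``folklore lemma'' and supplies no proof, so there is nothing in the source to compare against; you had to build the argument yourself, and the first five steps are sound: the reduction to the non-degenerate quotient, the finite-dimensionality of $W=V^{\perp_Q}$ (reading ``finite codimension'' as \emph{closed} of finite codimension, as in the application), the hyperbolic complement $I^*$ to $I=V\cap W$, and the $Q$-orthogonal splittings $V=V_0\oplus I$, $W=W_0\oplus I$ with $V_0\cap W_0=0$ and $V_0\perp_Q W_0$ are all correct. The genuine gap is exactly the step you flag as remaining, the assertion $V_0\oplus W_0=H_0$. The pairing $W_0\times(H_0/V_0)\to\R$ you invoke is non-degenerate only in the $W_0$-slot and need not be perfect. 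Concretely, writing $Q_0(x,y)=\langle T_0x,y\rangle$ with $T_0$ bounded, self-adjoint and injective, one has $W_0=T_0^{-1}(V_0^\perp)$, so $\dim W_0=\operatorname{codim}_{H_0}V_0$ holds only when $V_0^\perp\subseteq\operatorname{range} T_0$; but the range of a bounded injective self-adjoint operator is in general merely dense.

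This is not just a hole in the proof: the lemma as stated is \emph{false} for a general continuous form. On $H=\ell^2$ take $Te_1=-e_1$ and $Te_n=n^{-2}e_n$ for $n\ge 2$, so $Q(x,y)=\langle Tx,y\rangle$ is continuous, non-degenerate, and $\ind^- Q=1$. Let $V=w^\perp$ with $w=e_1+\sum_{n\ge2}n^{-2}e_n$. Then $V^{\perp_Q}=T^{-1}(\operatorname{span} w)=0$, since $Tx=cw$ forces $x_n=c$ for all $n\ge 2$, which lies in $\ell^2$ only if $c=0$; so the right-hand side of~\eqref{eq:formula_index} equals $\ind^- Q|_V$. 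But every $x\in V$ has $x_1=-\sum_{n\ge2}n^{-2}x_n$, and Cauchy--Schwarz together with $\sum_{n\ge2}n^{-2}<1$ gives $Q(x,x)=-\bigl(\sum_{n\ge 2}n^{-2}x_n\bigr)^2+\sum_{n\ge2}n^{-2}x_n^2>0$ for $x\ne0$, so $\ind^- Q|_V=0\ne 1$. The missing hypothesis, which both repairs the statement and closes your proof, is that the operator representing $Q$ be Fredholm --- equivalently, that $Q$ be a Legendre form --- which is exactly what the strong Legendre condition of Assumption~\ref{ass:legendre} provides in the paper's setting. With it, $\ker Q$ is finite-dimensional (so $\bar V$ remains closed in the quotient), and in the non-degenerate quotient $T_0$ is self-adjoint, injective and Fredholm of index zero, hence invertible; then $\dim W_0=\operatorname{codim}_{H_0}V_0$, the sum $V_0\oplus W_0$ is closed and equals $H_0$ by dimension count, and the rest of your argument assembles as you intended. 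You should add this hypothesis explicitly.
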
 
	
	For us the Hilbert space will be the subspace $\mathcal{V}$. Thus
	$$
	V^{\perp_Q }  = \{\hat v \in \mathcal{V} : Q(\hat v, \hat w) = 0, \forall \hat w \in V\},
	$$ 
	and the kernel of $Q$ on $\cV$ is
	$$
	\ker Q = \{\hat v \in \cV : Q(\hat v,\hat w) = 0, \forall \hat w \in \mathcal{V}\}.
	$$

	In the following discussion we derive boundary value problems whose differential equation is a \emph{Jacobi equation}. The solution to those boundary value problems encode all the information about each term in formula~\eqref{eq:formula_index}. 
	
	By definition the subspace $V$ is the set of variations $\hat{v}=(v_0,v,v_1)$ such that $\int_0^1 Z_t v(t) dt \in \Pi$, $v_0=0,\, v_1 =0$. Moreover, since $\Pi$ is a Lagrangian subspace		 
	$$
	\int_0^1 Z_tv(t) dt \in \Pi \iff \sigma \left(\int_0^1 Z_tv(t) dt , \nu\right) =0, \,\forall \, \nu \in \Pi
	$$
	and so:
	\begin{equation*}
		\begin{split}
			\hat v \in V^{\perp_Q} &\iff Q(\hat v,\hat w) = 0, \, \forall \, \hat w \in V \\&\iff Q(\hat v,\hat w) = \int_{-1}^2 \sigma (\nu,\hat Z_t\hat w(t)) dt, \quad \forall\nu \in \Pi,  \forall \,\, \hat w \in V.
		\end{split}
	\end{equation*}
		
	Using formula~\eqref{quadratic form} we have that for almost every $t\in[0,1]$ and a vector $\nu \in \Pi$:
	\begin{equation*}
		H_t(v(t),\cdot)+\sigma \left(\int_0^t Z_\tau v(\tau) d \tau+Z_0v_0, Z_t \cdot \right)= \sigma (Z_t\cdot,\nu).
	\end{equation*}	
	By the strong Legendre condition $H_t$ is invertible. This allows us solve the equation for the variation $v$ and obtain
	$$
	v(t) = H_t^{-1}\sigma \left(Z_t\cdot,\int_0^t Z_\tau v(\tau) d \tau +Z_0v_0 + \nu\right).
	$$
	
	Set 
	$$	
	\eta (t) = \int_0^t Z_\tau v(\tau) d \tau +Z_0 v_0 + \nu.
	$$
	Differentiating $\eta$ and plugging in the expression for the variation $v$ shows that $\eta$ satisfies the following equation for almost all $t\in[0,1]$:
	\begin{equation}
		\label{jacobi equation}
		\dot{\eta}(t) = Z_tH_t^{-1}\sigma (Z_t\cdot,\eta(t)).
	\end{equation}
	This equation is known as the \textit{Jacobi equation}~\cite[Theorem 21.1]{bookcontrol}. Using the definition of $Z_0$ and Lemma~\ref{lemma:boundary_manifolds} we find that $\eta(t)$ satisfies~\eqref{jacobi equation} with $\pi_*\eta(0) \in T_{q_0}N_0$. To obtain conditions at $t=1$, we have to use the fact that $\hat{v}\in \cV$. In this case from~\eqref{eq:differential endpoint mapping} it follows that there exists $\xi \in \Pi$ such that

	\begin{equation*}
		\eta(1) = \int_0^1 Z_\tau v(\tau) d \tau +Z_0 v_0 + \nu = \xi+ \nu -Z_1v_1.
	\end{equation*} 
	Thus the variation $\hat v \in V^{\perp_Q}$ defines a function $\eta:[0,1]\to T_{\lambda(0)}(T^*M)$ which solves the following boundary value problem
	\begin{equation}
		\label{boundary value problem orthogonal complement}
		\begin{cases}
			\dot{\eta}(t) = Z_tH_t^{-1}\sigma (Z_t\cdot,\eta(t)), \\
			\pi_*\eta(0) \in T_{q_0}N_0, \quad \pi_*\eta(1) \in (\pi \circ P^{-1}_{\tilde{u}})_*(T_{q_1} A (N_1)).
		\end{cases}
	\end{equation}
	
	The second space in the boundary condition is the pull back of $N_1$ to $q_0$ using the flow of the control system with the extremal control $\tilde{u}$. From this it is immediate to compute the dimension of $V \cap V^{\perp_Q}$. It is enough to substitute $v_i=0$ in the above equations and thus consider solution starting from $\Pi$ and arriving to $\Pi$. Since the Jacobi equation derived above is exactly the same as the Jacobi equation for problem with fixed points, we immediately see that $\dim (V \cap V^{\perp_Q})$ is the multiplicity of the point $q_1$ as conjugate point.

	In the same spirit we can compute the dimension of $\ker Q \cap V$ using the Jacobi equation. We have
	\begin{equation*}
		\ker Q \cap V= \{\hat v \in V: Q(\hat v,\hat w)=0, \,\, \forall \hat w \in \mathcal{V}\}.
	\end{equation*}
	Using the same argument as above we find that for every $\nu \in \Pi$
	\begin{equation*}
		\begin{cases}
			0 = \sigma (Z_0\cdot ,\nu)\\
			Q(v,\cdot) = \sigma (Z_t\cdot ,\nu)\\
			\sigma \left(\int_0^1 Z_tv(t) dt,Z_1\cdot\right) = \sigma (Z_1\cdot,\nu) 
		\end{cases}
	\end{equation*}
	The second equation allows us to recover a solution of the Jacobi equation $\eta$ using the same argument as above, when we considered variations $\hat{v} \in V \cap V^{\perp_Q}$. The first equality gives us a condition on $\nu$ and consequently on $\eta(0)$, while the third condition give us a condition for $\eta(1)$. Namely
	$$
	\eta(0) \in \Pi \cap T_{\lambda(0)} A(N_0), \qquad \eta(1) \in \Pi \cap T_{\lambda(0)} (P^{-1}_{\tilde{u}} A(N_0)).
	$$

	The following proposition collects all the facts proved above and clarifies the correspondence between controls and solutions of the boundary value problems.
	
	\begin{prop}
		\label{prop jacobi equation subspaces}
		Consider system \eqref{jacobi equation}, to any solution $\eta$ satisfying the boundary value problem \eqref{boundary value problem orthogonal complement} we can associate a variation $v\in V^\perp_Q$ such that $\dot{\eta}(t) = Z_t v(t)$ and vice-versa, modulo solutions satisfying $\eta(0),\eta (1) \in \Pi$ and $\dot{\eta} = 0$. Moreover:
		\begin{enumerate}[(i)]
			\item elements inside  $V\cap V^{\perp_Q}$ correspond to solutions of~\eqref{jacobi equation} satisfying the boundary conditions:
			\begin{equation*}
				\eta(0) \in \Pi, \quad \eta(1) \in \Pi; 
			\end{equation*}
			\item  elements of $\ker Q \cap V$ correspond to solutions satisfying the boundary conditions:
			\begin{equation*}
				\eta(0) \in \Pi \cap T_{\lambda(0)}A (N_0), \quad \eta(1) \in \Pi \cap T_{\lambda(0)} P_{\tilde{u}}^{-1}(A (N_1));
			\end{equation*}
		\item elements in $\ker Q$ correspond to solutions of~\eqref{jacobi equation} satisfying the boundary conditions:
			\begin{equation*}
				\eta(0) \in T_{\lambda(0)}A (N_0), \quad \eta(1) \in T_{\lambda(0)} P_{\tilde{u}}^{-1}(A (N_1)).
		\end{equation*}
		\end{enumerate}
		
	\end{prop}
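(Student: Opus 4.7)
The plan is to set up a correspondence $\hat v \leftrightarrow \eta$ (modulo constant $\Pi$-valued solutions of the Jacobi equation) and then specialise the boundary data to deduce (i)--(iii).

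First I would carry out the forward map $\hat v \mapsto \eta$. Most of this is already visible in the preceding discussion: for $\hat v = (v_0,v,v_1) \in V^{\perp_Q}$, orthogonality against every $\hat w \in V$ together with the Lagrangianness of $\Pi$ produces some $\nu \in \Pi$ for which setting
$$
\eta(t) := \int_0^t Z_\tau v(\tau)\, d\tau + Z_0 v_0 + \nu
$$
gives $H_t\, v(t) = \sigma(Z_t\cdot, \eta(t))$ almost everywhere; differentiating yields the Jacobi equation~\eqref{jacobi equation}. The boundary condition $\pi_*\eta(0) \in T_{q_0}N_0$ then follows from Lemma~\ref{lemma:boundary_manifolds}, since $\pi_* Z_0 v_0 \in T_{q_0}N_0$ and $\pi_*\nu = 0$. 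The condition at $t=1$ comes from the endpoint relation $\eta(1) = \xi + \nu - Z_1 v_1$, with $\xi\in\Pi$, after pulling back by $P^{-1}_{\tilde u}$ and applying the same lemma at the final time.

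Next I would construct the reverse map $\eta \mapsto \hat v$. Given a solution $\eta$ of~\eqref{boundary value problem orthogonal complement}, Assumption~\ref{ass:legendre} lets me set
$$
v(t) := H_t^{-1}\sigma(Z_t\cdot, \eta(t)),
$$
so that the Jacobi equation becomes $\dot\eta = Z_t v$. Since the vector fields $f_0^j$ span $T_{q_0}N_0$, the boundary datum $\pi_*\eta(0)\in T_{q_0}N_0$ allows me to choose $v_0$ with $\pi_*(Z_0 v_0) = \pi_*\eta(0)$; the residue $\nu := \eta(0) - Z_0 v_0$ then lies in $\Pi$. Making the analogous choice at $t=1$, a short computation shows $\int_0^1 Z_t v\, dt + Z_0 v_0 + Z_1 v_1 = \eta(1) + Z_1 v_1 - \nu \in \Pi$, so $\hat v \in \cV$, and a direct pairing check gives $\hat v\in V^{\perp_Q}$. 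The remaining freedom in choosing $v_0,v_1$ (the kernel of $\pi_*\circ Z_i$ in $\Pi$-direction) corresponds exactly to adding a constant $\eta\equiv\nu_0\in\Pi$, which justifies the quotient in the statement.

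To extract (i)--(iii) I would then impose the extra algebraic conditions on the boundary. Part (i) is immediate: $\hat v\in V$ forces $v_0=v_1=0$, whence $\eta(0),\eta(1)\in\Pi$. For (ii), the supplementary requirement $\hat v\in\ker Q$ means $Q(\hat v,\hat w)=0$ for all $\hat w\in\cV$, not only those in $V$; pairing against $\hat w$ with non-zero $w_0$ yields $\sigma(\eta(0),Z_0\cdot)=0$, and analogously at $t=1$, which by Lemma~\ref{lemma:boundary_manifolds} translate into $\eta(0)\in T_{\lambda(0)}A(N_0)$ and $\eta(1)\in T_{\lambda(0)}P^{-1}_{\tilde u}A(N_1)$; intersecting with (i) gives the claim. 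Part (iii) simply drops the $v_0=v_1=0$ constraint and retains only the two annihilator conditions at the endpoints. The main obstacle I foresee is bookkeeping the degrees of freedom so that the quotient by constant $\Pi$-solutions matches exactly the non-uniqueness in reconstructing $\hat v$ from $\eta$, and verifying that Lemma~\ref{lemma:boundary_manifolds} applies cleanly \emph{after} pulling back by the extremal flow at $t=1$; this is essentially a symplectic-transport computation but deserves careful attention.
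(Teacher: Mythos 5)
Your overall plan matches the paper's: rely on the Jacobi-equation framework already derived in the text, set up the correspondence $\hat v\leftrightarrow\eta$, and then read off (i)--(iii) by imposing boundary data. That part is sound and parallels the paper closely.

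However, the handling of the quotient --- which is essentially the only thing the paper's own proof actually spells out --- has a genuine gap. You attribute the non-uniqueness to ``the kernel of $\pi_*\circ Z_i$ in $\Pi$-direction,'' but that kernel is trivial: the fields $f_i^j$ are chosen so that $\pi_*\circ Z_i$ is an isomorphism from $\mathbb R^{\dim N_i}$ onto $T_{q_i}N_i$, so $v_0,v_1$ are uniquely determined by $\pi_*\eta(0),\pi_*\eta(1)$. The actual source of the quotient is different: given a constant $\Pi$-valued solution $\eta\equiv\nu_0$, the injectivity of $\pi_*\circ Z_i$ forces $v_0=v_1=0$, and $\dot\eta=0$ gives $Z_tv(t)=0$; from this alone one cannot conclude $v\equiv 0$, because $Z_t$ need not be injective. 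The missing ingredient is the strong Legendre condition: once $v_0=v_1=0$ and $Z_tv\equiv 0$, one has $\hat v\in V\cap V^{\perp_Q}$, hence $Q(\hat v,\hat v)=-\int_0^1 H_t(v,v)\,dt=0$, and since $-H_t$ is uniformly positive definite this forces $v\equiv 0$. Without that argument the claimed bijection ``modulo constant $\Pi$-solutions'' is not established, so you should replace the incorrect parenthetical with this Legendre-based argument.
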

	
	\begin{proof}
		Following the derivation of the Jacobi equation and associated boundary conditions it only remains to prove the first part by computing the kernel of the map $\eta \mapsto v$. If $\hat v = 0$, then $v \equiv 0$ for all $t\in[0,1]$ and $v_0=0$, $v_1 = 0$. This implies that $\eta \equiv \nu$. 
		
		Vice versa, let $\eta$ be constant with $\eta(0) \in \Pi$. Since $Z_i$ are injective and have non-trivial projections to $T_{\pi(\lambda(0))}M$, it follows that $v_i=0$. Moreover $\dot{\eta}(t) = Z_tv(t) = 0$ and consequently, by definition of $V^{\perp Q}$,
		$$
		0 = Q(\hat{v},\hat{w}) = \int_0^1 H_t(v(t),w(t)) dt, \qquad \forall \hat{w} \in V.
		$$
		In particular $H_t(v(t),v(t)) =0$ for almost every $t\in[0,1]$. But then by the strong Legendre condition $v \equiv 0$. Point $(iii)$ can be obtained in a similar fashion as point $(i)$ and $(ii)$.
	\end{proof}

	As before let $\Theta$ be the differential of the Hamiltonian flow given in \cref{eq:hamilton} and $\Gamma(\Theta)$ the graph of $\Theta$.
	
	\begin{rmrk}
		\label{rmrk:flow}
		It can be shown that \eqref{jacobi equation} is closely related to the linearisation of the extremal flow along the fixed extremal $\lambda$ we are considering, see for example \cite{agrachev_stefani_bang}. It is the linearisation at $\lambda(0)$ of the Hamiltonian flow of $b^t_{\tilde{u}}(\lambda) = (H-h_{\tilde{u}(t)})\circ \Phi_t(\lambda)$ which coincides with the linearisation of $(\Phi_t)^{-1} \circ e^{tH}$. Let us denote by $\Theta_J$ the flow of the Jacobi equation~\eqref{jacobi equation} at time one and let
		$$
		\Gamma(\Theta_J) = \{(\eta(0),\eta(1)) : \eta(0) \in T_{\lambda(0)} (T^*M)\} \subset T_{\lambda(0)} (T^*M)\times T_{\lambda(0)} (T^*M) 
		$$
		be its graph. Then in this notation
		$$
		\Gamma(\Theta) = (I \times \Phi_t)_* \Gamma(\Theta_J).
		$$
	\end{rmrk}
	
	We can now compute the restriction of $Q$ to $V^{\perp_Q}$ and prove the following result. 
	
	\begin{prop}
		\label{Prop Q on V perp for separated boundary condition }
		Let $Q$ be the quadratic form of second variation for the problem~\eqref{eq:opt_ext1}-\eqref{eq:opt_ext3} and $V$ be the subspace of variations~\eqref{eq:variations}. Then
		$$ 
		\ind^-Q = \ind^{-} Q|_V + i\big(\Pi^2_{\underline \lambda},\Gamma(\Theta),T_{\underline \lambda}A(N) \big) + \dim(\Gamma(\Theta) \cap \Pi^2_{\underline \lambda}) - \dim(\Gamma(\Theta)\cap \Pi^2_{\underline \lambda} \cap T_{\underline \lambda} A(N))
		$$
		Moreover, the Maslov index of the triple can be replaced by $i\big((\Pi^2_{\underline \lambda})^{W},\Gamma(\Theta)^{W}, T_{\underline \lambda} A(N)^W\big)$ where $W = T_{\underline{\lambda}} A(N) \cap \Pi_{\underline{\lambda}}^2 $ and
		the superscript means everything is computed on the reduced subspace with respect to $W$.
	\end{prop}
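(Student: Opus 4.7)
The plan is to apply Lemma~\ref{lemma index on subspaces} with $V$ as the subspace of controls fixing the end-points, yielding
\[
\ind^- Q = \ind^- Q|_V + \ind^- Q|_{V^{\perp_Q}} + \dim\bigl(V\cap V^{\perp_Q}/(V\cap \ker Q)\bigr),
\]
and then match the last two terms on the right with the Maslov index and the dimension corrections in the statement. The first summand is by definition $\ind^- Q|_V$, so the real work is computing $Q|_{V^{\perp_Q}}$ and the finite-dimensional dimension defect in symplectic-geometric terms.

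For the Maslov term, I would use Proposition~\ref{prop jacobi equation subspaces} to parametrize elements $\hat v\in V^{\perp_Q}$, modulo the kernel of the correspondence $\hat v\mapsto \eta$, by Jacobi fields $\eta$ with $\eta(0)\in T_{\lambda(0)}A(N_0)+\Pi_{\lambda(0)}$ and $\eta(1)\in T_{\lambda(0)}P_{\tilde u}^{-1}(A(N_1))+\Pi_{\lambda(0)}$. Pushing forward via $(\id\times\Phi_1)_*$ and invoking Remark~\ref{rmrk:flow}, this identifies the reduced space of such fields with the pairs $(\xi,\Theta\xi)\in\Gamma(\Theta)$ lying in $\Pi^2_{\underline\lambda}+T_{\underline\lambda}A(N)$, i.e.\ with the space on which the Maslov form $m(\Pi^2_{\underline\lambda},\Gamma(\Theta),T_{\underline\lambda}A(N))$ is defined. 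The main computational step is to show that under this identification $Q(\hat v,\hat v)$ equals (or has the same signature as) this Maslov form. For that I would substitute the relation $H_t(v,\cdot)+\sigma(\eta,Z_t\cdot)=\sigma(Z_t\cdot,\nu)$ coming from $\hat v\in V^{\perp_Q}$ into~\eqref{quadratic form} to eliminate $H_t$, use $Z_tv=\dot\eta$ and integrate by parts, so that the integral collapses to a purely boundary expression in $(\eta(0),\eta(1))$ and the auxiliary vertical vectors; the resulting form is exactly $\sigma(\pi_0,a_0)$-$\sigma(\pi_1,a_1)$ (with the right symplectic form $(-\sigma)\oplus\sigma$ on the product), which is the Maslov form of the triple $(\Pi^2_{\underline\lambda},\Gamma(\Theta),T_{\underline\lambda}A(N))$. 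This integration-by-parts step is the main obstacle: one must take care that the boundary contributions at $t=1$ involving $Z_1v_1$ combine properly with the interior $\nu$, using Lemma~\ref{lemma:boundary_manifolds} and the fact that $\Pi$ is Lagrangian so that horizontal/vertical decompositions of $\eta(0),\eta(1)$ are well-defined modulo the intersections with $A(N)$.

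For the dimension defect, Proposition~\ref{prop jacobi equation subspaces}(i) says $V\cap V^{\perp_Q}$ corresponds to Jacobi fields with $\eta(0),\eta(1)\in\Pi$, which via Remark~\ref{rmrk:flow} is exactly $\Gamma(\Theta)\cap\Pi^2_{\underline\lambda}$; similarly part (ii) identifies $V\cap\ker Q$ with $\Gamma(\Theta)\cap\Pi^2_{\underline\lambda}\cap T_{\underline\lambda}A(N)$. Since the kernel of the correspondence $\hat v\mapsto \eta$ contains only constant $\eta$ with $\eta(0)\in\Pi$ and $v\equiv 0$ (by strong Legendre, as in the proof of Proposition~\ref{prop jacobi equation subspaces}), this kernel contributes equally to both pieces and drops out, leaving precisely
\[
\dim\bigl(V\cap V^{\perp_Q}/V\cap \ker Q\bigr) = \dim(\Gamma(\Theta)\cap\Pi^2_{\underline\lambda}) - \dim(\Gamma(\Theta)\cap\Pi^2_{\underline\lambda}\cap T_{\underline\lambda}A(N)),
\]
as required.

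Finally, for the ``moreover'' clause, observe that $W=T_{\underline\lambda}A(N)\cap\Pi^2_{\underline\lambda}$ is automatically isotropic since it sits inside the Lagrangian $\Pi^2_{\underline\lambda}$, and it is contained in both $\Pi^2_{\underline\lambda}$ and $T_{\underline\lambda}A(N)$. Consequently the symplectic reduction property~\eqref{eq: symplectic reduction maslov index} of the Maslov index applies directly with the pair $(L_0,L_2)=(\Pi^2_{\underline\lambda},T_{\underline\lambda}A(N))$, giving
\[
i\bigl(\Pi^2_{\underline\lambda},\Gamma(\Theta),T_{\underline\lambda}A(N)\bigr)=i\bigl((\Pi^2_{\underline\lambda})^W,\Gamma(\Theta)^W,T_{\underline\lambda}A(N)^W\bigr),
\]
which finishes the proof.
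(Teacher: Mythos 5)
Your plan matches the paper's own proof: apply Lemma~\ref{lemma index on subspaces} to split $\ind^- Q$ into $\ind^- Q|_V$, $\ind^- Q|_{V^{\perp_Q}}$ and a dimension defect; identify $Q|_{V^{\perp_Q}}$ with the Maslov form of the triple $(\Pi^2_{\underline\lambda},\Gamma(\Theta),T_{\underline\lambda}A(N))$ by substituting the $V^{\perp_Q}$-relation into~\eqref{quadratic form} so everything collapses to a boundary expression; read off the dimension defect from Proposition~\ref{prop jacobi equation subspaces}(i)-(ii); and handle the ``moreover'' clause by symplectic reduction with respect to $W=T_{\underline\lambda}A(N)\cap\Pi^2_{\underline\lambda}$, which is isotropic as a subspace of $\Pi^2_{\underline\lambda}$. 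The only small slip is the sign of the boundary expression: with the product form $(-\sigma)\oplus\sigma$ and $l_0=(\pi_0,\pi_1)\in\Pi^2$, $l_2=(a_0,a_1)\in T_{\underline\lambda}A(N)$, the Maslov form is $-\sigma(\pi_0,a_0)+\sigma(\pi_1,a_1)$, not $\sigma(\pi_0,a_0)-\sigma(\pi_1,a_1)$ — and that is the sign that the paper's computation produces for $Q|_{V^{\perp_Q}}$.
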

	
	\begin{proof}
		
		In view of Proposition~\ref{prop jacobi equation subspaces} and Remark~\ref{rmrk:flow} it only remains to prove that 
		$$
		\ind^- Q_{V^{\perp_Q}} = i \big(\Pi^2_{\underline{\lambda}},\Gamma(\Theta),T_{\underline{\lambda}} A(N)\big).
		$$
		
		Since $(v_0,v,v_1)\in V^{\perp_Q}$, we have that 
		\begin{equation*}
			\int_{0}^{1}\left[H_t(v,w) + \sigma \left(Z_0v_0+\int_{0}^t Z_\tau v(\tau) d \tau ,Z_tw(t) \right)\right]dt = \sigma \left(\int_0^1 Z_t w(t)dt,\nu\right), \quad \forall w \in L^2[0,1], \forall \nu \in \Pi.
		\end{equation*}
		Combining the last expression with \eqref{quadratic form} gives us:
		\begin{equation}
			\label{Q on V perp}
			\begin{split}
				Q(\hat v) &= -\sigma \left(Z_0v_0+\int_0^1 Z_t v(t) dt,Z_1v_1\right) -\sigma \left(\int_0^1 Z_t v(t) dt,\nu \right)\\
				&= -\sigma (\xi,Z_1v_1) + \sigma (Z_1v_1+Z_0v_0,\nu) \\&=  -\sigma (\nu,Z_0v_0)+\sigma (\xi+\nu,-Z_1v_1)  \\
			\end{split}
		\end{equation}
		where we have used that
		$$
		\xi = Z_0v_0+Z_1v_1+\int_0^1 Z_t v(t) dt\in \Pi.
		$$
		
		From the derivation of the Jacobi equation it follows that 
		$$
		\eta(0) = \nu + Z_0v_0  \quad \eta(1) = \nu + Z_0v_0 + \int_0^1 Z_t v(t) dt = \nu+ \xi-Z_1v_1.	 
		$$	 
		Hence the restriction of $Q$ to $V^{\perp Q}$ coincides with the quadratic form 
		$$
		{m} \big(\Pi^2, \Gamma(\Theta_J), T_{\lambda(0)}A(N_0) \times T_{\lambda(0)}( \Phi_t^{-1} A(N_1) )\big).
		$$
		
		Note that $Q|_{V^{\perp_Q}}$ is actually defined on a slightly smaller space, because $Z_0v_0$ do not span the whole $T_{\lambda(0)} A(N_0)$ and similarly $Z_1v_1$ does not span $T_{\lambda(0)} (\Phi^{-1}_t A(N_1))$ correspondingly. 
		
		Nevertheless we obtain the correct Maslov form.  In fact, the map $Z_0 :\mathbb{R}^{k} \to T_{\lambda(0)} A(N_0) $ is injective and its image is transversal to $\Pi \cap T_{\lambda(0)}A(N_0)$ (and the same is true for the $Z_1$). The sum of spaces $\Pi \cap T_{\lambda(0)} A(N_0) $ and $\Pi \cap T_{\lambda(0)}(\Phi_t^{-1} A(N_1))$ lies in the kernel of the Maslov form. Removing it does not change the domain  since $\im Z_0 + \Pi = T_{\lambda(0)}A(N_0)+\Pi$ (and similarly for $Z_1$). 
		
		Hence we can either reduce by $\Pi \cap T_{\lambda(0)}A(N_0)\oplus \Pi \cap T_{\lambda(0)}(\Phi_t^{-1} A(N_1))$ or work on the original space. The index is the same.
		
		We now apply the map $I \times (\Phi_t)_*$ to each Lagrangian space inside the Maslov index of the triple above. By Remark~\ref{rmrk:flow} we get
		$$
		i \big(\Pi^2, \Gamma(\Theta_J), T_{\lambda(0)}A(N_0) \times T_{\lambda(0)}( \Phi_t^{-1} A(N_1) ) \big) = i \big(\Pi^2_{\underline \lambda},\Gamma(\Theta),T_{\underline \lambda}A(N)\big).
		$$
		
	\end{proof}

\subsection{Proof of \Cref{thm: comparison theorem}}

\label{subsec:proof}

Before proving the general formula, we prove a corollary of Proposition~\ref{Prop Q on V perp for separated boundary condition }. Assume that we have an optimal control problem \eqref{eq:control}-\eqref{eq:boundary} and two sets of possible boundary conditions:
$$
(q(0),q(1))\in N_0 \times N_1 =: N 
$$
and
$$
(q(0),q(1))\in \tilde N_0 \times \tilde N_1 =: \tilde N.
$$
and assume that a curve $\lambda: [0,1]\to TM$ is an extremal in both problems simultaneously, which simply means that $\lambda$ is a solution of the Hamiltonian system of PMP and satisfies the transversality conditions for both boundary conditions at the same time, i.e. $\lambda_i$ annihilates the sum $T_{\lambda_i}N_i +T_{\lambda_i}\tilde N_i$. A relevant example to keep in mind is when $N \subset \tilde{N}$. In this case if $\lambda$ satisfies the transversality conditions for $\tilde{N}$ it satisfies the transversality conditions for $N$ automatically.

Consider the two second variations $Q_{N}$ and $Q_{\tilde{N}}$ corresponding to the two optimal control problems with boundary conditions like above. Using Proposition~\ref{Prop Q on V perp for separated boundary condition } we can find the difference between the Morse indices of those two quadratic forms.

\begin{corollary}
	\label{cor:more_general_separated_formula}
	Using the notations of this section the following formula holds
	\begin{align}
		\ind^-Q_{\tilde N} - \ind^{-} Q_{N} &= i \big(T_{\underline \lambda}A(N),\Gamma(\Theta),T_{\underline \lambda}A(\tilde N)\big)
+ \dim(\Gamma(\Theta) \cap T_{\underline \lambda} A(N)) + \nonumber\\ &-   \dim(\Gamma(\Theta) \cap T_{\underline \lambda} A(N) \cap T_{\underline \lambda} A(\tilde N)) + \dim(T_{\pi(\underline \lambda)}N \cap T_{\pi(\underline \lambda)}\tilde{N}) - \dim  T_{\pi(\underline \lambda)} N. \label{eq:formula_several_b_conds}
	\end{align}
\end{corollary}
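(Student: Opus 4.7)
The plan is to apply Proposition~\ref{Prop Q on V perp for separated boundary condition } separately to the problems with boundary conditions $N$ and $\tilde N$, and then reassemble the resulting expressions via the cocycle identity~\eqref{eq: coboundary index} together with Lemma~\ref{lemma: maslov annullatore}. The crucial observation is that the subspace $V$ of variations with $v_0=v_1=0$ and the restriction $Q|_V$ coincide for both problems: they correspond to the fixed end-point second variation, which depends only on the original control system and not on the choice of $N_0,N_1$. Subtracting the two instances of Proposition~\ref{Prop Q on V perp for separated boundary condition } therefore gives
\begin{align*}
\ind^- Q_{\tilde N} - \ind^- Q_N &= i\big(\Pi^2,\Gamma(\Theta),T_{\underline\lambda} A(\tilde N)\big) - i\big(\Pi^2,\Gamma(\Theta),T_{\underline\lambda} A(N)\big) \\
&\quad + \dim\big(\Pi^2\cap\Gamma(\Theta)\cap T_{\underline\lambda} A(N)\big) - \dim\big(\Pi^2\cap\Gamma(\Theta)\cap T_{\underline\lambda} A(\tilde N)\big).
\end{align*}

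Next I would apply~\eqref{eq: coboundary index} to the four Lagrangian subspaces $L_0 = T_{\underline\lambda}A(N)$, $L_1 = \Pi^2$, $L_2 = \Gamma(\Theta)$, $L_3 = T_{\underline\lambda}A(\tilde N)$. The transversality conditions in PMP force $\pi(\underline\lambda) \in N\cap\tilde N$, so the single point $\{\pi(\underline\lambda)\}$ is contained in both $N$ and $\tilde N$, and Lemma~\ref{lemma: maslov annullatore} yields $i(T_{\underline\lambda}A(N),\Pi^2,T_{\underline\lambda}A(\tilde N)) = 0$; cyclic invariance~\eqref{eq: invariance cyclic permutation index} rewrites $i(L_0,L_1,L_2) = i(\Pi^2,\Gamma(\Theta),T_{\underline\lambda}A(N))$. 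Substituting back into the previous display, the triple intersections $\dim(\Pi^2\cap\Gamma(\Theta)\cap T_{\underline\lambda}A(N))$ and $\dim(\Pi^2\cap\Gamma(\Theta)\cap T_{\underline\lambda}A(\tilde N))$ produced by the cocycle cancel exactly the ones coming from Proposition~\ref{Prop Q on V perp for separated boundary condition }, producing
\begin{equation*}
\ind^- Q_{\tilde N} - \ind^- Q_N = i\big(T_{\underline\lambda}A(N),\Gamma(\Theta),T_{\underline\lambda}A(\tilde N)\big) + \dim\big(T_{\underline\lambda}A(N)\cap\Gamma(\Theta)\big) - \dim\big(T_{\underline\lambda}A(N)\cap\Gamma(\Theta)\cap T_{\underline\lambda}A(\tilde N)\big) + \Delta,
\end{equation*}
where $\Delta = \dim\big(T_{\underline\lambda}A(N)\cap\Pi^2\cap T_{\underline\lambda}A(\tilde N)\big) - \dim\big(\Pi^2\cap T_{\underline\lambda}A(\tilde N)\big)$.

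The last step is to identify $\Delta$ with the manifold-side correction $\dim(T_{\pi(\underline\lambda)}N\cap T_{\pi(\underline\lambda)}\tilde N) - \dim T_{\pi(\underline\lambda)}N$ by a direct dimension count at $\pi(\underline\lambda)$: an element of $\Pi^2\cap T_{\underline\lambda}A(\tilde N)$ is purely vertical, i.e.\ a covector at $\pi(\underline\lambda)$ annihilating $T\tilde N$, so its space has dimension $\dim(M\times M)-\dim\tilde N$, while imposing additional annihilation of $TN$ gives $\dim(M\times M)-\dim(TN+T\tilde N)$; inclusion-exclusion on $\dim(TN+T\tilde N)$ yields the claimed identity. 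The main obstacle is purely bookkeeping: one has to choose the ordering of $(L_0,L_1,L_2,L_3)$ in the cocycle formula so that, after cyclic invariance and Lemma~\ref{lemma: maslov annullatore}, the surviving Maslov indices become precisely the terms appearing on the two sides of the target identity, and the six triple-intersection dimensions produced by~\eqref{eq: coboundary index} cancel cleanly against the four coming from Proposition~\ref{Prop Q on V perp for separated boundary condition }.
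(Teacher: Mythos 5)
Your proposal is correct and follows essentially the same route as the paper's own proof: subtract the two instances of Proposition~\ref{Prop Q on V perp for separated boundary condition }, apply the coboundary identity~\eqref{eq: coboundary index}, invoke Lemma~\ref{lemma: maslov annullatore} together with cyclic invariance, and identify the remaining vertical-intersection dimensions by a codimension count. Your ordering $(L_0,L_1,L_2,L_3)=(T_{\underline\lambda}A(N),\Pi^2,\Gamma(\Theta),T_{\underline\lambda}A(\tilde N))$ is a cyclic shift of the paper's $(\Pi^2,\Gamma(\Theta),T_{\underline\lambda}A(\tilde N),T_{\underline\lambda}A(N))$, which merely negates both sides of~\eqref{eq: coboundary index} and yields the same cancellations.
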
 

\begin{proof}
	Apply Proposition~\ref{Prop Q on V perp for separated boundary condition } to get an expression for $\ind^-Q_{\tilde{N}}$ and $\ind^-Q_{N}$. Subtracting one from the other gives
	\begin{align*}
		\ind^-Q_{\tilde N} - \ind^{-} Q_{N} &= i \big(\Pi^2_{\underline \lambda},\Gamma(\Theta),T_{\underline \lambda}A(\tilde N)\big) - i \big(\Pi_{\underline \lambda},\Gamma(\Theta),T_{\underline \lambda}A(N)\big)+\\ 
		&+\dim(\Gamma(\Theta) \cap \Pi_{\underline \lambda} \cap T_{\underline \lambda} A(N)) - \dim(\Gamma(\Theta)\cap \Pi_{\underline \lambda} \cap T_{\underline \lambda} A(\tilde N)) 
	\end{align*}
	Apply formula~\eqref{eq: coboundary index} with $L_0 = \Pi^2_{\underline \lambda}$, $L_1 = \Gamma(\Theta)$, $L_2 = T_{\underline \lambda}A(\tilde N)$ and $L_3 = T_{\underline \lambda}A(N)$. After cancellations this results in
	\begin{align*}
		\ind^-Q_{\tilde N} - \ind^{-} Q_{N} &= i\big(\Gamma(\Theta),T_{\underline \lambda}A(\tilde N),T_{\underline \lambda}A(N)\big) - i \big(\Pi^2_{\underline \lambda},T_{\underline \lambda} A(\tilde N),T_{\underline \lambda} A(N)\big)+ \\
		&+\dim(\Gamma(\Theta) \cap T_{\underline \lambda} A(N)) - \dim(\Gamma(\Theta) \cap T_{\underline \lambda} A(N) \cap T_{\underline \lambda} A(\tilde N)) - \\
		& - \dim(\Pi^2_{\underline \lambda} \cap T_{\underline \lambda} A(\tilde N)) +  \dim(\Pi^2_{\underline \lambda} \cap T_{\underline \lambda} A(N) \cap T_{\underline \lambda} A(\tilde N)).
	\end{align*}
	Terms $\dim(\Gamma(\Theta) \cap T_{\underline \lambda} A(N))$, $\dim(\Gamma(\Theta) \cap T_{\underline \lambda} A(N) \cap T_{\underline \lambda} A(\tilde N))$ are already exactly as in the formula of the statement. It remains to simplify all of the remaining terms.
	
	By formula~\eqref{eq: invariance cyclic permutation index}
	$$
	i \big(\Gamma(\Theta),T_{\underline \lambda}A(\tilde N),T_{\underline \lambda}A(N)\big) = i \big(T_{\underline \lambda}A(N),\Gamma(\Theta),T_{\underline \lambda}A(\tilde N)\big)
	$$
	since we have an even permutation of subspaces inside. By \cref{lemma: maslov annullatore} we have
	$$
	i \big(\Pi^2_{\underline \lambda},T_{\underline \lambda} A(\tilde N),T_{\underline \lambda} A(N)\big) =i \big(T_{\underline \lambda} A(N),\Pi^2_{\underline \lambda},T_{\underline \lambda} A(\tilde N)\big)= 0.
	$$
	Finally, straight from the definition of an annihilator, it follows that 
	$$
	\dim(\Pi^2_{\underline \lambda} \cap T_{\underline \lambda} A(\tilde N)) = 2\dim M - \dim T_{\pi(\underline \lambda)}\tilde{N}
	$$
	and
	$$
	\dim(\Pi^2_{\underline \lambda} \cap T_{\underline \lambda} A(N) \cap T_{\underline \lambda} A(\tilde N)) = 2\dim M - \dim T_{\pi(\underline \lambda)}\tilde{N} - \dim T_{\pi(\underline \lambda)} N + \dim ( T_{\pi(\underline \lambda)}N \cap  T_{\pi(\underline \lambda)}\tilde{N}).
	$$
	Combining all of the above results in formula~\eqref{eq:formula_several_b_conds}.

\end{proof}

\begin{rmrk}
	Notice that if $N = \{q_0\}\times \{q_1\}$ we obtain exactly the formula from Proposition~\ref{prop jacobi equation subspaces} as expected. Another necessary remark is that formula~\eqref{eq:formula_several_b_conds} might seem asymmetric at first. We expect, that if we exchange $N$ and $\tilde{N}$, then the resulting right-hand side will change sign. This is not entirely obvious just from the expression itself. However, this is indeed the case, because the difference between $i\big(T_{\underline \lambda}A(N),\Gamma(\Theta),T_{\underline \lambda}A(\tilde N)\big)$ and $i \big(T_{\underline \lambda}A(\tilde N),\Gamma(\Theta),T_{\underline \lambda}A(N)\big)$ is not zero, but an expression involving dimensions of intersections of various subspaces as can be seen from formula~\eqref{eq: coboundary index}. 
	
\end{rmrk}

Now we are ready to prove \Cref{thm: comparison theorem}. We will reduce the case of general boundary conditions $(q_0,q_1) \in N \subseteq M \times M$ to the case with separated boundary conditions by introducing extra dummy variables. 
\begin{proof}[Proof of \Cref{thm: comparison theorem}]
	Consider optimal control problem \eqref{eq:control}-\eqref{eq:functional}. We can lift it to an optimal problem on $M\times M$ by considering a new control system:
	\begin{equation}
		\label{eq:control_general}
		\begin{cases}
			\dot{x} = 0,\\
			\dot{q} = f^t_{u(t)}(q),
		\end{cases}
	\end{equation}
	with boundary conditions
	\begin{equation}
		\label{eq:boundary_general}
		(x(0),q(0),x(1),q(1))\in \Delta \times N \subset M^4.
	\end{equation}
	It is clear that there is a one-to-one correspondence between admissible curves of~\eqref{eq:control}-\eqref{eq:functional} and admissible curve of~\eqref{eq:control_general}-\eqref{eq:boundary_general}. For this reason we can consider admissible curves~\eqref{eq:control_general}-\eqref{eq:boundary_general} which minimize the functional~\eqref{eq:functional}. The Hamiltonian system of PMP is then given by
	$$
	\begin{cases}
		\dot{\mu} = 0,\\
		\dot{\lambda} = \vec{H}^{t}(\lambda),
	\end{cases} \qquad \lambda,\mu \in T^*M.
	$$
	and its flow is given by $I \times {\Psi_t}$. 
	
	We can now apply directly Corollary~\ref{cor:more_general_separated_formula} to the boundary conditions $\Delta \times \tilde{N}$ and $\Delta \times N$. In order to see that everything indeed reduces to formula~\eqref{eq:main_index_formula} without writing explicitly the lengthy formula here, let us go term by term starting from the last one. Let $\underline{\lambda} = (\lambda(0),\lambda(0),\lambda(0),\lambda(1))$. We have
\begin{align*}
&\dim \left( T_{\pi(\underline\lambda)}(\Delta \times N) \cap T_{\pi(\underline\lambda)}(\Delta \times \tilde N) \right) - \dim \left( T_{\pi(\underline\lambda)}(\Delta \times N) \right) = \\
= &\dim T_{\pi(\underline\lambda)}\Delta + \dim (T_{\pi(\underline\lambda)}N \cap T_{\pi(\underline\lambda)}\tilde N) - \dim T_{\pi(\underline\lambda)}\Delta - \dim T_{\pi(\underline\lambda)}N = \dim (T_{\pi(\underline\lambda)}N \cap T_{\pi(\underline\lambda)}\tilde N) - \dim T_{\pi(\underline\lambda)}N. 
\end{align*} 

In order to deal with the last term choose Darboux coordinates around $\lambda(0)$ and $\lambda(1)$ which fix the horizontal and the vertical subspaces. For this computation we identify $T_{\lambda(0)}(T^*M)\simeq T_{\lambda(1)}(T^*M) =: \Sigma$. Let $S: \Sigma \to \Sigma$ be the map, which changes the sign of the vertical part. In Darboux coordinates it is given by the matrix
$$
\begin{pmatrix}
-1 & 0\\
0 & 1 
\end{pmatrix}.
$$
In particular, we have
$$
\sigma(S\mu_1,S\mu_2) = -\sigma(\mu_1,\mu_2), \qquad \forall\mu_1,\mu_2 \in \Sigma.
$$
Let us write down explicitly each individual subspace entering the formula. Due to our conventions of signs for the symplectic form on $(T^* M)^4$ we have $-\sigma$ on the first two copies of $T^*M$ and $\sigma$ on the last two. We have to distinguish the two definitions of annihilator here, $\hat A(N)$ is the annihilator when we use $\sigma \oplus \sigma$ (as given in \cref{def: eq annullatore}) whereas $A(N)$ is given by \cref{eq:ann_def}, and is the right object to use when the symplectic form is $(-\sigma)\oplus \sigma$. Notice once again that $S\times I(\hat{A}(N)) = A(N)$.
\begin{align}
T_{ \underline\lambda}A(\Delta \times N) &= \{(S\xi,\xi, \nu_1, \nu_2): \xi \in \Sigma, (\nu_1,\nu_2)\in T_{\underline{\lambda}}\hat A(N)\},\nonumber\\
T_{ \underline\lambda}A(\Delta \times \tilde N) &= \{(S\tilde \xi,\tilde \xi, \tilde \nu_1, \tilde \nu_2): \tilde \xi \in \Sigma, (\tilde \nu_1,\tilde \nu_2)\in T_{\underline{\lambda}}\hat A(\tilde N)\},\label{eq:annihilators}\\
\Gamma(I\times \Theta) &= \{(\eta_1,\eta_2, \eta_1, \Theta \eta_2):\eta_1,\eta_2 \in \Sigma\}.\nonumber
\end{align}

From expressions in~\eqref{eq:annihilators} it directly follows that
	\begin{align*}
		\dim(\Gamma(I \times \Theta) \cap T_{ \underline\lambda} A(\Delta \times N))  &= \dim(\Gamma(\Theta) \cap T_{\underline \lambda} A(N)) \\
		\dim(\Gamma(I \times \Theta) \cap T_{\underline\lambda} A(\Delta \times N) \cap T_{ \underline\lambda} A(\Delta \times \tilde N)) &=   \dim(\Gamma(\Theta) \cap T_{\underline \lambda} A(N) \cap T_{\underline \lambda} A(\tilde N)).
	\end{align*}

In order to simplify the Maslov index term, we note that the intersection of annihilators contains the following isotropic subspace
$$
W = \{(S\xi,\xi,0,0): \xi \in \Sigma\}.
$$
For this reason we can perform a reduction to the space $W^\perp / W$. We have
$$
W^\perp = \{(S\xi_2,\xi_2,\xi_3,\xi_4): \xi_i \in \Sigma\}.
$$
Thus we can identify $W^\perp/W$ with the image of the projection $\pi_1: \Sigma^4 \to \Sigma^2$ to the last two terms $(\Sigma \times \Sigma, \sigma \oplus \sigma)$.  Let us consider the space $(W^\perp + W) \cap \Gamma(I \times \Theta)$, it is straight forward to check that its projection is the subspace $\{(\eta,\Theta \eta): \eta \in \Sigma\}$.  We can now calculate the Maslov form on the reduced space. First of all we write down the equation defining the subspace:
$$
\begin{cases}
\nu_1 + \tilde \nu_1 = \eta, \\
\nu_2 + \tilde{\nu}_2 = \Theta \eta.
\end{cases}
$$
where  $(\nu_1,\nu_2)\in T_{\underline{\lambda}}\hat A(N)$, $
(\tilde \nu_1,\tilde \nu_2)\in T_{\underline{\lambda}}\hat A(\tilde N)$, 
$\eta \in \Sigma$. It follows that
$$
m\left((\eta,\Theta\eta)\right) = \sigma(\nu_1,\tilde \nu_1)+\sigma(\nu_2,\tilde \nu_2) = -\sigma(S\nu_1,S\tilde \nu_1)+\sigma(\nu_2,\tilde \nu_2),   
$$

 But this is exactly the Maslov form $m \big(T_{\underline \lambda}A(N),\Gamma(\Theta),T_{\underline \lambda}A (\tilde N)\big)$. Hence
\begin{equation*}
		\label{eq:two_maslov_equal}
		i\big(T_{ \underline\lambda}A(\Delta \times N),\Gamma(I \times \Theta),T_{ \underline\lambda}A (\Delta \times \tilde N)\big) = i \big(T_{\underline \lambda}A(N),\Gamma(\Theta),T_{\underline \lambda}A (\tilde N)\big).
	\end{equation*}

\end{proof}

\bibliographystyle{plain}
\bibliography{ref}

\end{document}